\documentclass[reqno]{amsart}
\usepackage{amsmath}
\usepackage{amssymb}
\usepackage{amsthm}
\usepackage{graphicx}
\usepackage{color}

\newtheorem{theorem}{Theorem}[section]
\newtheorem{proposition}[theorem]{Proposition}
\newtheorem{corollary}[theorem]{Corollary}
\newtheorem{lemma}[theorem]{Lemma}
\theoremstyle{definition}
\newtheorem{definition}[theorem]{Definition}

\newtheorem{question}[theorem]{Question}
\newtheorem{remark}[theorem]{Remark}

\newcommand{\oline}[1]{\mathbin{\overline{#1}}}

\newcommand{\ostar}{\mathbin{\overline{\star}}}
\newcommand{\ustar}{\mathbin{\underline{\star}}}

\newcommand{\mincol}{{\rm mincol}^{\rm Dehn}}
\newcommand{\mincolF}{{\rm mincol}^{\rm Fox}}

\newcommand{\R}{\mathcal{R}}
\newcommand{\C}{\mathcal{C}}

\numberwithin{equation}{section}

\begin{document}

\title[]{Minimum numbers of Dehn colors of knots and symmetric local biquandle cocycle invariants}

\author[E.~Matsudo]{Eri Matsudo} 
\address{The Institute of Natural Sciences, Nihon University, 3-25-40 Sakurajosui, Setagaya-ku, Tokyo 156-8550, Japan}
\email{matsudo.eri@nihon-u.ac.jp}

\author[K.~Oshiro]{Kanako Oshiro}
\address{Department of Information and Communication Sciences, Sophia University, Tokyo 102-8554, Japan}
\email{oshirok@sophia.ac.jp}

\author[G.~Yamagishi]{Gaishi Yamagishi}
\address{}
\email{g-yamagishi-3c9@eagle.sophia.ac.jp}

\keywords{Knots, Dehn colorings, Minimum numbers of colors, Symmetric local biquandle cocycle invariants}

\subjclass[2020]{57K10, 57K12}

\date{\today}

\maketitle

\begin{abstract}
This is the second paper which discusses minimum numbers of ``region" colors for knots, while minimum numbers of arc colors are well-studied, where the first one is our previous paper \cite{MatsudoOshiroYamagishi-1}.

In this paper, we give a method to evaluate minimum numbers of Dehn colors for knots by using symmetric local biquandle cocycle invariants.
We give answers to some questions arising as a consequence of our previous paper \cite{MatsudoOshiroYamagishi-1}. 
In particular, we show that there exist knots which are distinguished by minimum numbers of Dehn colors. 
\end{abstract}

\section{Introduction}
This is the second paper which discusses minimum numbers of ``region" colors for knots, while minimum numbers of arc colors are well-studied, where the first one is our previous paper \cite{MatsudoOshiroYamagishi-1}.

In knot theory, minimum numbers of colors of knots for Fox colorings are studied in many papers (see \cite{AbchirElhamdadiLamsifer, BentoLopes, HanZhou, HararyKauffman, IchiharaMatsudo, NakamuraNakanishiSatoh13,   NakamuraNakanishiSatoh16, Oshiro10, OshiroSatoh, Satoh09, Saito10} for example).
In particular, it is known that when $p$ is an odd prime number, for any Fox $p$-colorable knot $K$, the minimum number of colors of $K$, denoted by $\mincolF_p (K)$, satisfies that $\mincolF_p (K) = \lfloor \log_2 p \rfloor +2$ (see \cite{IchiharaMatsudo, NakamuraNakanishiSatoh13}).

Dehn colorings are a kind of region coloring for knot diagrams, which is known to be corresponding to Fox colorings.    
However, it is no exaggeration to say that minimum numbers of colors of knots for Dehn colorings (moreover those for region colorings) almost have not been studied yet. 
In our previous paper \cite{MatsudoOshiroYamagishi-1}, 
the next properties were shown.
\begin{itemize}
\item For any odd prime number $p$ and any Dehn $p$-colorable knot $K$, 
$$\mincol_p(K) \geq \lfloor \log_2 p \rfloor +2$$  (see also Theorem~\ref{th:MOY1-1}).
\item When $p=3$ or $5$, for any Dehn $p$-colorable knot $K$, $$\mincol_p(K) =\lfloor  \log_2 p \rfloor +2$$  (see also Remark~~\ref{rem:pcoloreddiagram}).
\item For any odd prime number $p$ with $p<2^5$ and $p\not \in \{13, 29\}$, 
there exists a Dehn $p$-colorable knot $K$ with $$\mincol_p(K) =\lfloor \log_2 p \rfloor +2$$ (see also Proposition~\ref{thm:pcoloreddiagram}).
\end{itemize}
Therefore, it is natural to ask the following questions.
\begin{question}\label{question1}
\begin{itemize}
\item[(1)] For each $p\in \{13, 29\}$, does there exist a Dehn $p$-colorable knot $K$ with $$\mincol_p(K) =\lfloor \log_2 p \rfloor +2~?$$
\item[(2)] For each prime number $p$ with $p>5$, does there exist a Dehn $p$-colorable knot $K$ with 
 $$\mincol_p(K) > \lfloor  \log_2 p \rfloor +2~?$$ 
\item[(3)] For each prime number $p$ with $p>5$, do there exist two Dehn $p$-colorable knots $K_1$ and $K_2$ with $$\mincol_p(K_1)\not = \mincol_p(K_2)?$$
\end{itemize}

\end{question}

In this paper, we will discuss some methods to evaluate $\mincol_p(K)$ by using symmetric local biquandle cocycle invariants (see Theorems~\ref{thm:main3} and \ref{thm:main2}).
In particular,  in Theorem~\ref{thm:main3}, we will prove that $\mincol_p(K) \geq \lfloor \log_2 p \rfloor +3$ when $p\in \{13, 29\}$, which implies that the answer to (1) of Question~\ref{question1} is ``NO''. 
Moreover, in the proof of Proposition~\ref{thm:main4}, we will give a concrete example of a Dehn $p$-colorable knot $K$ with $\mincol_p(K) \geq \lfloor  \log_2 p \rfloor +3$ for each prime number $p$ with $5<p<2^5$ and $p\not=17$, which implies that the answer to (2) of Question~\ref{question1} is ``Yes'' for some $p$. 
The results of this paper together with the results in \cite{MatsudoOshiroYamagishi-1} also give a partial answer to (3) of Question~\ref{question1}, i.e., the answer to (3) is "Yes" for some $p$. 
Note that for the case of $p=17$, the evaluation method introduced in this paper is not applied, and thus, we need anther method to study this case in our future. 

This paper is organized as follows.
In Section~2, we review the definitions and previous results on Dehn colorings and minimum numbers of colors of Dehn $p$-colorable knots. Symmetric local biquandle (co)homology groups and symmetric local biquandle cocyle invariants are defined in Sections~3 and 4, respectively. Section~5 is devoted to state our main results.
In Sections~6, 7 and  8, we prove the main results: Theorems~\ref{thm:main3}, \ref{thm:main2} and Proposition~\ref{thm:main4}, respectively.

\section{Dehn colorings and minimum numbers of colors}
In this paper, for an odd prime number $p$, we denote by $\mathbb Z_p$ the cyclic group $\mathbb Z/ p\mathbb Z$.

Let $p$ be an odd prime number.
Let $D$ be a diagram of a knot $K$, and $\mathcal{R}(D)$ the set of regions of $D$.
A {\it Dehn $p$-coloring} of $D$ is 
a map $C: \mathcal{R}(D) \to \mathbb Z_p$ 
satisfying the following condition: 
\begin{itemize}
\item for each crossing $\chi$ with regions 
$x_1, x_2, x_3$, and $x_4$ 
as depicted in Figure~\ref{coloring2},
\[
C(x_1) + C(x_3) =C(x_2) + C(x_4)
\]
holds, where the region $x_2$ is adjacent to $x_1$ by an under-arc and $x_3$ is adjacent to $x_1$ by the over-arc.
\end{itemize}
We call $C(x)$ the {\it color} of a region $x$ by $C$. 
In this paper, as shown in the right of Figure~\ref{coloring2}, we represent a Dehn $p$-coloring $C$ of a knot diagram $D$ by assigning the color $C(x)$ to each region $x$.  
We mean by $(D,C)$ a diagram $D$ given a Dehn $p$-coloring $C$, and call it a {\it Dehn $p$-colored diagram}. 
We denote by $\mathcal{C}(D, C)$ the set of colors assigned to a region of $D$ by $C$, that is $\mathcal{C}(D, C)={\rm Im}\,C$.
The set of Dehn $p$-colorings of $D$ is denoted by ${\rm Col}_{p}(D)$.
We remark that the number $\# {\rm Col}_{p}(D)$ is an invariant of the knot $K$.
\begin{figure}[t]
  \begin{center}
    \includegraphics[clip,width=7cm]{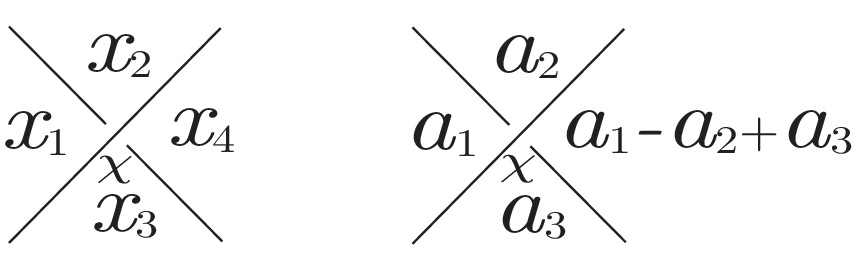}
    \caption{A crossing on $D$ and the one on $(D,C)$ with $C(x_1)=a_1$, $C(x_2)=a_2$, $C(x_3)=a_3$, $C(x_4)=a_1-a_2+a_3$}
    \label{coloring2}
  \end{center}
\end{figure}

For a semiarc $u$ of a Dehn $p$-colored diagram $(D,C)$, when the two regions $x_1$ and $x_2$ which are on the both side of $u$ have $C(x_1)=a_1$ and $C(x_2)=a_2$ for some $a_1,a_2\in \mathbb Z_p$, we call $u$ an {\it $\{a_1,a_2\}$-semiarc}, where $\{a_1,a_2\}$ is regarded as a multiset $\{a_1,a_1\}$ when $a_1=a_2$.

Let $\chi$ be a crossing of $D$ with regions $x_1, x_2, x_3$, and $x_4$ 
as depicted in Figure~\ref{coloring2}.
We say that $\chi$ of $(D,C)$ is {\it trivially colored}  if 
\[
C(x_1) = C(x_4), \mbox{ and } C(x_3) =C(x_2)
\]
hold, and {\it nontrivially colored} otherwise.
A Dehn $p$-coloring $C$ of $D$ is {\it trivial} if each crossing of $(D,C)$ is trivially colored, and {\it nontrivial} otherwise. 
In other words, a Dehn $p$-coloring $C$ of $D$ is {\it trivial} if it is a monochromatic coloring or a checkerboard coloring.

We denote by ${\rm Col}_p^{\rm NT}(D)$ the set of nontrivial Dehn $p$-colorings of $D$. 
We remark that the number $\# {\rm Col}_{p}^{\rm NT}(D)$ is an invariant of the knot $K$.
A knot $K$ is {\it Dehn $p$-colorable} if $K$ has a Dehn $p$-colored diagram $(D,C)$ such that $C$ is nontrivial.

The set ${\rm Col}_{p}(D)$ can be regarded as a $\mathbb Z_p$-module with the scalar product $sC$ and the addition $C+C'$ with
\begin{align*}
&(sC): \R(D) \to \mathbb Z_p; (sC)(x) = sC(x),\\ 
&(C+C'): \R(D) \to \mathbb Z_p; (C+C') (x) = C(x) + C'(x). 
\end{align*}
Moreover, since for any $t \in \mathbb Z_p$, the constant map $C^{\rm 1T}_t : \R(D) \to \mathbb Z_p; C^{\rm 1T} (x)=t $ is a kind of Dehn $p$-coloring, 
the map 
\[
(sC+t): \R(D) \to \mathbb Z_p; (sC+t)(x) = sC(x)+t,
\]
is also a Dehn $p$-coloring, and thus, the transformations $C \mapsto sC+t$ are closed in ${\rm Col}_p(D)$. 
In particular, a transformation $C \mapsto sC+t$ with $s \in \mathbb Z_p^\times$ is called a {\it regular affine transformation on ${\rm Col}_p(D)$}.
Two Dehn $p$-colorings $C, C' \in {\rm Col}_p(D)$ are {\it affine equivalent} (written $C\sim C'$) if they are related by a regular affine transformation on ${\rm Col}_p(D)$, that is, $C'=sC +t$ for some $s\in \mathbb Z_p^\times$ and $t \in \mathbb Z_p$.

A {\it regular affine transformation on the $\mathbb Z_p$-module $\mathbb Z_p$} is defined by $a\mapsto sa+t$ for some $s\in \mathbb Z_p^\times$ and $t \in \mathbb Z_p$.
Two subsets $S,S'\subset \mathbb Z_p$ are {\it affine equivalent}
(written $S\sim S'$) if they are related by a regular affine transformation on $\mathbb Z_p$, that is, $S'=sS +t$ for some $s\in \mathbb Z_p^\times$ and $t \in \mathbb Z_p$.
We note that since any regular affine transformation is a bijection, $\# S = \# S'$ holds if $S\sim S'$. 
\begin{lemma}[\cite{MatsudoOshiroYamagishi-1}]\label{lemma:colorequiv}
\begin{itemize}
\item[(1)] Let $C, C' \in  {\rm Col}_p(D)$. 
Then we have 
\[
C \sim C' \Longrightarrow \C(D,C) \sim \C(D,C').
\]
Hence we have 
\[
C \sim C'  \Longrightarrow \#\C(D,C) = \#\C(D,C').
\]
\item[(2)] Let $S, S' \subset \mathbb Z_p$, and we assume that $S\sim S'$. Then there exists $C\in {\rm Col}_p(D)$ such that $\C(D,C) = S$ if and only if  
there exists $C'\in {\rm Col}_p(D)$ such that $\C(D,C') = S'$
\end{itemize}
\end{lemma}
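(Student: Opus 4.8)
The plan is to notice that post-composing a coloring region-by-region with an affine map $a \mapsto sa+t$ behaves as expected with respect to the color set: it carries a coloring to a coloring and carries the color set to its affine image. Both parts of the lemma then follow by unwinding the definitions.

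First I would prove part (1). Assume $C \sim C'$, so $C' = sC+t$ for some $s \in \mathbb{Z}_p^\times$ and $t \in \mathbb{Z}_p$, i.e.\ $C'(x) = sC(x)+t$ for every region $x \in \R(D)$. Taking images over $\R(D)$,
\[
\C(D,C') = {\rm Im}\, C' = \{\, sC(x)+t : x \in \R(D) \,\} = s\,({\rm Im}\, C)+t = s\,\C(D,C)+t ,
\]
which exhibits a regular affine transformation on $\mathbb{Z}_p$ carrying $\C(D,C)$ to $\C(D,C')$; hence $\C(D,C) \sim \C(D,C')$. The equality $\#\C(D,C) = \#\C(D,C')$ is then immediate, since (as recorded just before the lemma) a regular affine transformation on $\mathbb{Z}_p$ is a bijection.

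For part (2) I would invoke the closure property from the paragraph preceding the lemma: if $C \in {\rm Col}_p(D)$, $s \in \mathbb{Z}_p^\times$ and $t \in \mathbb{Z}_p$, then $sC+t \in {\rm Col}_p(D)$. Suppose $S' = sS+t$. If some $C \in {\rm Col}_p(D)$ satisfies $\C(D,C) = S$, put $C' := sC+t \in {\rm Col}_p(D)$; the computation from part (1) gives $\C(D,C') = s\,\C(D,C)+t = sS+t = S'$. The reverse implication is the same argument with $S$ and $S'$ interchanged, using that $\sim$ is symmetric on subsets of $\mathbb{Z}_p$ (the inverse of $a \mapsto sa+t$ is $a \mapsto s^{-1}a - s^{-1}t$, again a regular affine transformation).

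I do not expect a genuine obstacle here: the only step beyond bookkeeping is that pointwise post-composition with $a \mapsto sa+t$ preserves the Dehn $p$-coloring condition, and this is precisely the closure statement already established in the paragraph before the lemma.
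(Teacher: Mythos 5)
Your proposal is correct: the identity $\C(D,sC+t)=s\,\C(D,C)+t$ obtained by taking images, together with the closure of ${\rm Col}_p(D)$ under regular affine transformations (and the fact that their inverses are again regular affine), yields both parts. The paper itself imports this lemma from \cite{MatsudoOshiroYamagishi-1} without reproving it, and your argument is the standard one that the cited source uses, so there is nothing to flag.
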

\begin{remark}\label{rem:colorequiv}
(1) of Lemma~\ref{lemma:colorequiv} implies that when we focus on the number of colors used for each nontrivial Dehn $p$-coloring of a diagram $D$, we may consider only the number of colors used for a representative of each affine equivalence class for nontrivial Dehn $p$-colorings of $D$.

(2) of Lemma~\ref{lemma:colorequiv} implies that when we focus on 
the sets of colors each of which might be used for a nontrivial Dehn $p$-coloring of a diagram, we may consider only representatives of the affine equivalence classes on the subsets of $\mathbb Z_p$.
\end{remark}

In this paper, we focus on the minimum number of colors.
\begin{definition} 
The {\it minimum number of colors} of a knot $K$ for Dehn $p$-colorings is the minimum number of distinct elements of $\mathbb Z_p$ which produce a nontrivially Dehn $p$-colored diagram of $K$, that is, 
\[
\min\Big\{\#\mathcal{C}(D,C) ~\Big|~ (C,D)\in \left\{
\begin{minipage}{4.3cm}
nontrivially Dehn $p$-colored\\
 diagrams of $K$
\end{minipage}
\right\}\Big\}.
\]
We denote it by $\mincol_p(K)$.
\end{definition}

In our previous paper \cite{MatsudoOshiroYamagishi-1}, the following properties were proven.
\begin{theorem}[\cite{MatsudoOshiroYamagishi-1}]\label{th:MOY1-1}
Let $p$ be an odd prime number. 
For any Dehn $p$-colorable knot $K$, we have  
\[
\mincol_p(K) \geq \lfloor \log_2 p \rfloor +2.
\]
\end{theorem}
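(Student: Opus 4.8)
\emph{Reformulation.} An elementary manipulation of floors shows that for a positive integer $n$ one has $n\geq\lfloor\log_2 p\rfloor+2$ if and only if $2^{n-1}>p$, i.e. $p\leq 2^{n-1}-1$. So it is enough to prove: if $(D,C)$ is a nontrivially Dehn $p$-colored diagram of a knot and $n:=\#\C(D,C)$, then $p\leq 2^{n-1}-1$. Fix such a $(D,C)$ and put $S:=\C(D,C)\subseteq\mathbb Z_p$.

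\emph{Step 1: a shortest-arc lower bound via integral lifts.} A regular affine transformation $C\mapsto sC+t$ sends trivially colored crossings to trivially colored crossings (it preserves the equalities $C(x_1)=C(x_4)$ and $C(x_3)=C(x_2)$), hence sends nontrivial colorings to nontrivial colorings; so by Lemma~\ref{lemma:colorequiv}(1) I may replace $C$ by any affine-equivalent coloring. Choose the representative for which $S$, read inside $\mathbb Z_p=\{0,1,\dots,p-1\}$, sits in the shortest arc $\{0,1,\dots,d\}$, with $0,d\in S$. I claim $d\geq(p+1)/2$. If not, then $d\leq(p-1)/2$, and lifting $C$ to $\widehat C\colon\R(D)\to\{0,1,\dots,d\}\subseteq\mathbb Z$ one finds that at every crossing the integer $\widehat C(x_1)+\widehat C(x_3)-\widehat C(x_2)-\widehat C(x_4)$ lies in $\{-2d,\dots,2d\}\subseteq(-p,p)$ and is $\equiv 0\pmod p$, hence equals $0$; so $\widehat C$ is a $\mathbb Z$-valued Dehn coloring. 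But the space of $\mathbb Q$-valued Dehn colorings of a knot diagram is two-dimensional, spanned by the monochromatic and the checkerboard colorings (its relation matrix has corank $2$, the torsion of the cokernel being $H_1$ of the double branched cover); so $\widehat C$ is an integral combination of a monochromatic and a checkerboard coloring and takes at most two values, whence $n\leq 2$ and $C$ is trivial --- a contradiction. Thus $d\geq(p+1)/2$, and the same holds in every affine coordinate system: $S$ cannot be squeezed into an arc of at most $(p-1)/2$ residues of $\mathbb Z_p$ after any regular affine change.

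\emph{Step 2: the combinatorial core.} Step~1 alone does not finish the job: there are $n$-element sets that survive it and yet are the color set of no nontrivial Dehn $p$-coloring --- for example $S=\{0,1,3\}\subseteq\mathbb Z_7$, since no relation $x_1+x_3=x_2+x_4$ can involve all three of $0,1,3$, which already excludes $n=3$ for $p=7$. The plan is to conclude by an induction on $n$ modelled on the proofs of the Fox-coloring equality $\mincolF_p(K)=\lfloor\log_2 p\rfloor+2$ in \cite{NakamuraNakanishiSatoh13,IchiharaMatsudo}: starting from a nontrivially Dehn $p$-colored diagram attaining the minimum, with $S$ normalized as in Step~1, one folds the color range $\{0,1,\dots,d\}$ roughly in half and repairs the finitely many crossings at which the folded relation acquires a $\pm p$ --- these occurring only at crossings whose relevant regions carry the ``large'' colors, which keeps the repair local by small Reidemeister modifications of the diagram --- thereby producing a knot carrying a nontrivial Dehn $\mathbb Z_{p'}$-coloring with at most $n-1$ colors and with $p'$ at least about $p/2$. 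Iterating, with bottom case the fact that a knot has no nontrivial Dehn coloring with two colors (such a coloring being monochromatic or checkerboard, hence trivial), gives $p\leq 2p'+1\leq 2(2^{n-2}-1)+1=2^{n-1}-1$.

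\emph{Where the difficulty is.} The floor bookkeeping and the integral-lift argument of Step~1 are routine; the step I expect to be the main obstacle is the color-reduction move of Step~2, where one must at the same time keep the folded assignment a genuine Dehn coloring modulo an integer that has not dropped below about $p/2$, and keep it nontrivial, i.e. avoid both species of trivial Dehn coloring. In the Fox setting this is precisely the substance of the Nakamura--Nakanishi--Satoh coloring-number arguments, stated there for arc colorings; the point is to recast it in the region-coloring language while tracking the \emph{two} types of trivial Dehn coloring rather than the single monochromatic one.
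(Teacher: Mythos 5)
This theorem is quoted from \cite{MatsudoOshiroYamagishi-1}; the present paper contains no proof of it, so your attempt can only be measured against the known arguments (the palette-graph method of that paper and of its Fox-coloring ancestors \cite{NakamuraNakanishiSatoh13, IchiharaMatsudo}). Your reformulation ($n\geq\lfloor\log_2 p\rfloor+2$ iff $p\leq 2^{n-1}-1$) and your Step 1 are correct: since a knot has odd nonzero determinant, the $\mathbb Q$-valued Dehn colorings of a knot diagram form a two-dimensional space consisting exactly of the monochromatic and checkerboard colorings, so the integral-lift argument does show that no regular affine transformation can move the color set of a nontrivial coloring into an arc of length at most $(p-1)/2$. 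But, as you note yourself, this falls well short of $p\leq 2^{n-1}-1$, and the entire remaining content is deferred to Step~2, which is a plan rather than a proof.

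Step 2 as described has concrete obstructions, not merely missing details. First, the induction needs the statement for a Dehn $p'$-colorable knot with $p'$ roughly $p/2$, but the whole framework (and the statement you are proving) is for $p'$ an odd prime, and $(p-1)/2$ is generally not prime; you would have to redevelop the theory for composite moduli, where the two-dimensionality used in Step~1 and in your base case can fail. Second, ``repairing the bad crossings by small Reidemeister modifications'' conflates two incompatible things: Reidemeister moves preserve the knot but cannot turn a non-coloring into a coloring, while genuinely altering the diagram near the bad crossings changes the knot, and you must then verify both that the result is still a one-component knot and that the folded coloring is still nontrivial (neither monochromatic nor checkerboard); these verifications are precisely the hard content, not side conditions, and your own proof of Theorem~\ref{thm:main3} in this paper shows how delicate the one-component constraint is. Third, that the folding drops the number of colors to at most $n-1$ is asserted, not argued. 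The known proofs avoid all of this: they are direct combinatorial arguments about the color set of a single fixed colored diagram (connectivity of a palette graph together with the closure conditions forced by the crossing relation), with no construction of auxiliary knots and no induction on the modulus. I would recommend abandoning the knot-producing induction and adapting the palette-graph argument to the region-coloring setting, keeping your Step~1 as the ``wrap-around'' ingredient.
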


\begin{theorem}[\cite{MatsudoOshiroYamagishi-1}]\label{thm:colorcandidate}
Let $p$ be an odd prime number with $p< 2^5$.
If there exists a nontrivially Dehn $p$-colored diagram $(D,C)$ of a knot such that $\#\C(D,C)= \lfloor \log_2 p \rfloor +2$, then 
\begin{itemize}
\item[(i)] $\C(D,C) \sim \{0,1,2\}$ when $p=3$,  
\item[(ii)] $\C(D,C) \sim \{0,1,2,3\}$ when $p=5$, 
\item[(iii)] $\C(D,C) \sim \{0,1,2,4\}$ when $p=7$, 
\item[(iv)] $\C(D,C) \sim \{0,1,2,3,6\}$ or $ \{0,1,2,4,7\}$ when $p=11$, 
\item[(v)] $\C(D,C) \sim \{0,1,2,4,7\}$ when $p=13$, 
\item[(vi)] $\C(D,C) \sim \{0,1,2,3,5,9\}$, $\{0,1,2,3,5,10\}$, $\{0,1,2,3,5,12\}$, $\{0,1,2,3,6,9\}$, $\{0,1,2,3,6,10\}$, $\{0,1,2,3,6,11\}$, $\{0,1,2,3,6,13\}$, $\{0,1,2,3,7,11\}$, $\{0,1,2,4,5,9\}$, $\{0,1,2,4,5,10\}$,  $\{0,1,2,4,5,12\}$,  or $\{0,1,2,4,10,13\}$ when $p=17$, 
\item[(vii)] $\C(D,C) \sim \{0,1,2,3,5,10\}$, $\{0,1,2,3,6,10\}$, $\{0,1,2,3,6,11\}$, $\{0,1,2,3,6,12\}$, $\{0,1,2,3,6,13\}$, $\{0,1,2,3,6,14\}$, $\{0,1,2,3,7,12\}$, $\{0,1,2,4,5,10\}$, $\{0,1,2,4,5,14\}$, $\{0,1,2,4,7,12\}$, or $\{0,1,2,4,7,15\}$ when $p=19$, 
\item[(viii)] $\C(D,C) \sim \{0,1,2,3,6,12\}$, $\{0,1,2,4,7,12\}$, $\{0,1,2,4,7,13\}$, $\{0,1,2,4,7,14\}$, $\{0,1,2,4,9,14\}$, or $\{0,1,2,4,10,19\}$ when $p=23$, 
\item[(ix)] $\C(D,C) \sim \{0,1,2,4,8,15\}$ when $p=29$, and 
\item[(x)] $\C(D,C) \sim \{0,1,2,4,8,16\}$ when $p=31$.
\end{itemize}
\end{theorem}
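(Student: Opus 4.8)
The plan is to turn the statement into a finite combinatorial question about subsets of $\mathbb{Z}_p$ and then to settle it by extracting from the Dehn coloring axioms, from connectivity of the diagram, and from minimality of the number of colors, a list of necessary conditions that cut the candidates down to exactly the ones listed. So I would fix a nontrivially Dehn $p$-colored diagram $(D,C)$ of a knot with $\#\C(D,C)=\lfloor \log_2 p\rfloor+2=:n$ and write $S:=\C(D,C)$. By Lemma~\ref{lemma:colorequiv}(2) and Remark~\ref{rem:colorequiv}, whether a given subset of $\mathbb{Z}_p$ can occur as the color set of some nontrivial Dehn $p$-coloring of a knot depends only on its affine equivalence class, so $S$ may be normalized freely. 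Since $C$ is nontrivial there is a nontrivially colored crossing, with region colors $a_1,a_2,a_3,a_1-a_2+a_3\in S$ and $a_2\ne a_3$; applying the regular affine transformation $x\mapsto (x-a_2)(a_3-a_2)^{-1}$ I may assume $0,1\in S$ and in fact $\{0,1,a,a+1\}\subseteq S$ for some $a\in\mathbb{Z}_p$. Thus the task is reduced to classifying, up to affine equivalence, the $n$-element subsets of $\mathbb{Z}_p$ compatible with these constraints.

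The heart of the argument is a pair of necessary conditions. The first is an integrality obstruction: if some affine image $sS+t$ of $S$ is contained in $\{0,1,\dots,k\}$ with $2k<p$, then in that representative every crossing relation $a_1+a_3\equiv a_2+a_4 \pmod p$ already holds over $\mathbb{Z}$ (both sides lie in $\{0,\dots,2k\}$), so the induced Fox coloring is $\mathbb{Z}$-valued; but a knot admits no nonconstant $\mathbb{Z}$-valued (equivalently $\mathbb{Q}$-valued) Fox coloring, so that coloring would be monochromatic, contradicting nontriviality. This alone removes the arithmetic-progression class $\{0,1,\dots,n-1\}$ whenever $2(n-1)<p$, and in particular it is exactly what excludes $\{0,1,2,3\}$ for $p=7$. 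The second, finer condition comes from revisiting the proof of Theorem~\ref{th:MOY1-1}: because the diagram is connected, the colors spread from the seed $\{0,1,a,a+1\}$ through the crossings, each crossing producing a color of the form $b_j-b_k+b_\ell$ out of colors $b_j,b_k,b_\ell$ already present, and since the numeric ``span'' of the colors in use can at most double at each such step, reaching a color set whose differences generate all of $\mathbb{Z}_p$ within only $n-2$ further steps forces the chain to be maximally efficient at every stage — a rigidity that pins $S$ down to a short explicit list for each $p$.

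Having these conditions, I would then carry out the resulting finite search. For each odd prime $p<2^5$ one enumerates the $n$-element subsets of $\mathbb{Z}_p$ up to affine equivalence ($n=3$ for $p=3$; $n=4$ for $p=5,7$; $n=5$ for $p=11,13$; $n=6$ for $p=17,19,23,29,31$) and discards those killed by the two conditions above; what remains is precisely the lists (i)--(x). For $n=3$ the statement is vacuous since every $3$-subset of $\mathbb{Z}_3$ is $\mathbb{Z}_3$; for $n=4$ it is an easy hand computation separating the two affine classes of $4$-subsets of $\mathbb{Z}_5$ and of $\mathbb{Z}_7$; for $n=5,6$ the enumeration is most cleanly organized with computer assistance, there being only finitely many affine classes in each case.

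I expect the main obstacle to be step two: making the propagation/doubling condition precise enough that it is simultaneously genuinely necessary for a knot coloring and sharp enough to discard every unwanted class, rather than merely recovering the numerical bound of Theorem~\ref{th:MOY1-1}. Among the individual cases, $p=17$ should be the most delicate: it is the case with the largest surviving list (twelve classes), and, as noted in the introduction, also the one that escapes the cohomological methods of the later sections, so particular care is needed there to verify that no class is either missed or spuriously included.
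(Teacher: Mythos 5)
This statement is quoted from \cite{MatsudoOshiroYamagishi-1} and the present paper contains no proof of it, so there is nothing in-text to compare your argument against; I can only judge the proposal on its own terms. On those terms there is a genuine gap. Your setup is fine: the reduction to affine equivalence classes via Lemma~\ref{lemma:colorequiv}, the normalization of a nontrivially colored crossing to $\{0,1,a,a+1\}\subseteq S$, and your first obstruction are all sound. In particular the integrality argument is correct: if $sS+t\subseteq\{0,\dots,k\}$ with $2k<p$, every crossing relation holds over $\mathbb Z$, the induced Fox coloring is $\mathbb Q$-valued and hence constant (as $\det K\neq 0$), so the Dehn coloring is monochromatic or checkerboard, i.e.\ trivial. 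This does dispose of $\{0,1,2,3\}$ for $p=7$, and since $\mathbb Z_7$ has exactly two affine classes of $4$-subsets, case (iii) genuinely follows from your machinery.

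The problem is your second condition, which is the one that has to do essentially all of the work for $p\geq 11$, and which you never actually formulate. ``The span can at most double at each step, so the chain must be maximally efficient, a rigidity that pins $S$ down'' is a restatement of the idea behind the bound $\mincol_p(K)\geq\lfloor\log_2 p\rfloor+2$ of Theorem~\ref{th:MOY1-1}; by itself it yields a cardinality inequality, not a classification of which $n$-element sets occur. To see that it cannot suffice as stated: for $p=13$ there are at least nine affine classes of $5$-subsets, only some of which are contained in a short interval, yet the theorem asserts that exactly one class survives. Discarding the others requires a precise necessary condition on $S$ --- in the cited reference this is packaged as a condition on the $\mathcal R$-palette graph of $S$, and the flavor of what is actually needed is visible in this paper's own proof of Theorem~\ref{thm:main3}, where the authors must additionally track how semiarcs of given color pairs connect up and rule out configurations that would force the diagram to have more than one component. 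Nothing in your ``doubling'' heuristic captures that connectivity constraint, so the finite search you describe in step three cannot be executed: you have no algorithmically checkable criterion to apply to the candidate classes, and hence no way to certify that the lists (iv)--(x) are neither too long nor too short. Until the second condition is stated as a concrete, provably necessary combinatorial property of $\C(D,C)$, the proposal is a plan rather than a proof.
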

\begin{proposition}[\cite{MatsudoOshiroYamagishi-1}]\label{thm:pcoloreddiagram}
For each odd prime number $p$ with $p<2^5$ and $p\not \in \{13, 29\}$, there exists a Dehn $p$-colorable knot $K$ with $\mincol_p(K) =\lfloor \log_2 p \rfloor +2$. 
\end{proposition}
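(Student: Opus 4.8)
The plan is to produce, for each relevant prime $p \in \{3,5,7,11,19,23,31\}$, one explicit knot together with an explicit nontrivially Dehn $p$-colored diagram using exactly $\lfloor \log_2 p\rfloor + 2$ colors, since by Theorem~\ref{th:MOY1-1} no fewer colors are possible and hence such a diagram realizes the minimum. I would proceed prime by prime, using Theorem~\ref{thm:colorcandidate} as a guide: that theorem tells us exactly which subsets $S\subset \mathbb Z_p$ (up to affine equivalence) \emph{could} possibly be the color set of such a minimal diagram, so it suffices to exhibit, for at least one of the listed candidate sets $S$ for each $p$, a knot diagram admitting a nontrivial Dehn $p$-coloring whose image is exactly $S$. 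By Lemma~\ref{lemma:colorequiv}(2) we are free to replace any candidate $S$ by an affine-equivalent one, which gives some flexibility in the search.

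The concrete mechanism I would use is the following. A Dehn $p$-coloring is equivalent to a Fox $p$-coloring of the arcs (via the standard region-to-arc correspondence), so minimal Dehn-colored diagrams can be sought among diagrams that already arise in the Fox-coloring literature; alternatively one can work directly with the region equation $C(x_1)+C(x_3)=C(x_2)+C(x_4)$ at each crossing. For each $p$ I would take a small knot known to be $p$-colorable — for instance a twist knot, a torus knot $T(2,n)$, or a pretzel knot whose determinant is divisible by $p$ — draw a diagram, and check directly that it admits a nontrivial Dehn $p$-coloring using only the colors in one of the candidate sets from Theorem~\ref{thm:colorcandidate}. The verification at each crossing is a single linear equation over $\mathbb Z_p$, so this is a finite, mechanical check once the diagram and coloring are written down. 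The figures (not reproduced here) carry the actual data; the proof text amounts to: "the diagram in Figure~X of the knot $K_p$ admits the displayed nontrivial Dehn $p$-coloring, whose color set has $\lfloor\log_2 p\rfloor+2$ elements, so $\mincol_p(K_p)\le \lfloor\log_2 p\rfloor+2$, and equality follows from Theorem~\ref{th:MOY1-1}."

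The main obstacle is purely one of search and bookkeeping rather than of any deep idea: for the larger primes ($p=19,23,31$) the minimal color sets are quite constrained (e.g.\ $\{0,1,2,4,8,16\}$ for $p=31$), and a random small knot need not admit a coloring hitting \emph{exactly} such a set — many colorable diagrams will use more colors. So the work is to locate, for each $p$, a diagram that is simultaneously small enough to draw and special enough that some nontrivial coloring is supported on one of the allowed six-element (or five-, four-element) sets. A useful tactic here is to reverse-engineer the diagram from the coloring: fix one of the candidate sets $S$, and build a diagram crossing-by-crossing so that each crossing's four regions satisfy the coloring relation within $S$; this is essentially constructing a suitable "minimally colored tangle" and closing it up into a knot, a technique already used in the Fox-coloring setting (cf.\ the references in the introduction) and in the authors' previous paper for the primes $p<2^5$ with $p\notin\{13,29\}$.

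Finally, I would remark that the primes $13$ and $29$ are deliberately excluded from this proposition precisely because for them no such minimal diagram exists — that is the content of Theorem~\ref{thm:main3}, proved by the cocycle-invariant obstruction later in the paper — so the proposition as stated is exactly the positive half of a dichotomy, and its proof requires only the explicit constructions just described and no obstruction argument.
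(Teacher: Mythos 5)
This proposition is imported from \cite{MatsudoOshiroYamagishi-1} with a citation and is not reproved in the present paper, so there is no in-paper proof to compare against; but your plan --- exhibit, for each admissible $p$, an explicit knot diagram carrying a nontrivial Dehn $p$-coloring whose image is one of the candidate sets of Theorem~\ref{thm:colorcandidate}, and invoke Theorem~\ref{th:MOY1-1} for the matching lower bound --- is exactly the kind of argument the authors use for the analogous statements they do prove here (Proposition~\ref{thm:main4} and the $p=17$ remark, each of which is just a figure plus a one-line color count). So the overall strategy is the right one.

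Two concrete issues. First, your list of primes to handle, $\{3,5,7,11,19,23,31\}$, omits $p=17$, which satisfies $17<2^5$ and $17\notin\{13,29\}$ and therefore must be covered; the paper itself supplies the needed example in the remark following Proposition~\ref{thm:main4} (a diagram with a nontrivial Dehn $17$-coloring using the $6=\lfloor\log_2 17\rfloor+2$ colors $\{0,1,2,4,10,13\}$, one of the candidate sets in case (vi) of Theorem~\ref{thm:colorcandidate}). Second, be careful with the claim that one can search among minimal diagrams from the Fox-coloring literature via the region-to-arc correspondence: that correspondence does not preserve the number of colors (the arc color is essentially the average of the two adjacent region colors), and indeed the whole point of this line of work is that $\mincol_p$ and $\mincolF_p$ genuinely differ --- e.g.\ $\mincolF_{13}(K)=5$ for every Fox $13$-colorable knot, while Theorem~\ref{thm:main3} shows $\mincol_{13}(K)\geq 6$ always. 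So a Fox-minimal diagram gives no control on the Dehn color count; the verification must be carried out directly on the region colors, as in your alternative suggestion. With $p=17$ restored and the examples actually exhibited, the argument is complete.
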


\begin{remark}[\cite{MatsudoOshiroYamagishi-1}]\label{rem:pcoloreddiagram}
The following properties hold.
\begin{itemize}
\item[(1)] When $p=3$, any $p$-colorable knot $K$ has $\mincol_p(K) =3$. 
\item[(2)] When $p=5$, any $p$-colorable knot $K$ has $\mincol_p(K) =4$. 
\end{itemize}
\end{remark}

\section{Local biquandle (co)homology groups useful for unoriented knots}\label{Sec:homology}

We first remark that the contents discussed in this section, except for Lemmas~\ref{lem:chaingroup} and \ref{lem:cocycle}, can be also applied in more general for a knot-theoretic horizontal-ternary-quasigroup $(X,[\,])$, see \cite{NelsonOshiroOyamaguchi}, with a tribracket $[\,]$ satisfying that  
\begin{itemize}
\item for any $a,b,c\in X$, 
\begin{align}\label{eq1}
[b,a,[a,b,c]]=c \mbox{ \ \ and \ \  } [c,[a,b,c], a] =b,
\end{align} 
\end{itemize} 
where this condition is required for dealing with ``unoriented'' knots.
See Remark~\ref{Rem:localbiquandle} for more details.

In this section, we introduce a local biquandle  (co)homology groups with an involution $\rho$ for the case related to Dehn colorings, where 
an ordinary version of local biquandle (co)homology groups was defined in \cite{NelsonOshiroOyamaguchi}. 
We remark that while an ordinary version is useful to construct an invariant for oriented knots, this version is useful to construct an invariant for ``unoriented'' knots (see Section~\ref{sec:cocycleinvariants}).

Let $p$ be an odd prime number, and put $X=\mathbb Z_p$ and $[a,b,c]:=a-b+c$. We define $\rho: X^2 \to X^2$ by $\rho \big( (a,b) \big) = (b,a)$.
For each $a \in X$, we define two operations $\ustar_a, \ostar_a: (\{a\} \times X)^2 \to X^2$ by 
\[
\begin{array}{l}
(a,b) \ustar_a (a,c) = (c, [a,b,c]), \mbox{ and }\\[5pt]
(a,b) \ostar_a (a,c) = (c, [a,c,b]). 
\end{array}
\]
We call $(X, \{\ustar_a\}_{a\in X}, \{\ostar_a\}_{a\in X})$ the {\it local biquandle associated with $(X,[\, ])$}.
In this paper, for simplicity, we often omit the subscript by $a$ as $\ustar=\ustar_a$, $\ostar=\ostar_a$,  
$\{\ustar\}=\{\ustar_a\}_{a\in X} $,  and 
$\{\ostar\}=\{\ostar_a\}_{a\in X} $ unless it causes confusion.

Let $n \in \mathbb Z$.
Let $C^{\rm lb}_n(X)$ be the free $\mathbb Z$-module generated by the elements of 
\[
\bigcup_{a\in X} (\{a\} \times X)^n =\big\{\big( (a,b_1), (a,b_2), \ldots , (a,b_n) \big) ~|~ a, b_1, \ldots , b_n \in X \big\}
\]
if $n\geq 1$, and $C^{\rm lb}_n(X)=0$ otherwise.
We define a homomorphism $\partial_n^{\rm lb} : C_n^{\rm lb} (X) \to C_{n-1}^{\rm lb} (X)$ by 
\begin{align}
&\partial_n^{\rm lb} \Big( \big( (a,b_1), \ldots , (a,b_n) \big) \Big) = \sum_{i=1}^{n} (-1)^i \big\{ \big( (a,b_1), \ldots, (a,b_{i-1}), (a,b_{i+1}), \ldots  , (a,b_n) \big) \notag \\
&- \big( (a,b_1)\ustar (a,b_i), \ldots, (a,b_{i-1})\ustar (a,b_i),    (a,b_{i+1})\ostar (a,b_i) ,\ldots  , (a,b_n) \ostar (a,b_i)\big) \big\} \notag \\
 &= \sum_{i=1}^{n} (-1)^i \big\{ \big( (a,b_1), \ldots,  (a,b_{i-1}), (a,b_{i+1}),  \ldots  , (a,b_n) \big) \notag \\
&- \big( (b_i, [a,b_1, b_i] ), \ldots, (b_i, [a,b_{i-1}, b_i]),    (b_i, [a,b_i, b_{i+1}]) ,\ldots  , (b_i, [a,b_i, b_{n}])\big) \big\} \notag 
\end{align}
if $n\geq 2$, and $\partial_n^{\rm lb}=0$ otherwise. Then $C_*^{\rm lb}(X)=\{C_n^{\rm lb}(X), \partial_n^{\rm lb}\}_{n\in \mathbb Z}$ is a chain complex (see \cite{NelsonOshiroOyamaguchi}).
Let $D_n^{\rm lb}(X)$ be the submodule of $C_n^{\rm lb}(X)$ that is generated by the elements of 
\[
\Big\{\big( (a,b_1),  \ldots , (a,b_n) \big) \in \bigcup_{a\in X
}  (\{a\} \times X)^n ~\Big|~ \mbox{ $b_i =b_{i+1}$ for some $i\in \{1, \ldots , n-1 \} $  }  \Big\}.
\]
Then 
$D_*^{\rm lb}(X)=\{D_n^{\rm lb}(X), \partial_n^{\rm lb}\}_{n\in \mathbb Z}$ is a subchain complex of $C_*^{\rm lb}(X)$ (see \cite{NelsonOshiroOyamaguchi}).
Besides, we define $D_n^{\rm lb}(X,\rho)$ to be the submodule of $C_n^{\rm lb}(X)$ that is generated by the elements of 
\[
\left\{
\begin{array}{l}
\big( (a,b_1),  \ldots , (a,b_n) \big) +\\  
\big( (a,b_1)\ustar (a,b_i) ,  \ldots , (a,b_{i-1}) \ustar (a,b_i)  ,\\
  \rho\big( (a,b_i) \big), (a,b_{i+1}) \ostar (a,b_i) , \ldots , (a,b_n)\ostar (a,b_i)  \big)
\end{array} 
~\Bigg|~ 
\begin{array}{l}
\big( (a,b_1),  \ldots , (a,b_n) \big) \\
\in \bigcup_{a\in X}  (\{a\} \times X)^n, \\
i\in \{1, \ldots , n\} 
\end{array}
 \right\}.
\]
We then have the following lemma.
\begin{lemma}
$D_*^{\rm lb}(X,\rho)=\{D_n^{\rm lb}(X,\rho), \partial_n^{\rm lb}\}_{n\in \mathbb Z}$ is a subchain complex of $C_*^{\rm lb}(X)$.
\end{lemma}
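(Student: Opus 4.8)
The plan is to show that $D_*^{\rm lb}(X,\rho)$ is closed under the boundary map $\partial_n^{\rm lb}$, since it is by construction a graded submodule of $C_*^{\rm lb}(X)$. Write a typical generator of $D_n^{\rm lb}(X,\rho)$ as $g = \alpha + \beta_i$, where $\alpha = \big((a,b_1),\ldots,(a,b_n)\big)$ and $\beta_i$ is the chain obtained from $\alpha$ by inserting $\rho\big((a,b_i)\big)=(b_i,b_i)$ in the $i$-th slot and applying $\ustar(a,b_i)$ to the entries before position $i$ and $\ostar(a,b_i)$ to the entries after position $i$. Note that $\beta_i$ lives over the base element $b_i$, i.e.\ every coordinate pair in $\beta_i$ has first entry $b_i$; this is consistent because $(a,b_k)\ustar(a,b_i) = (b_i, [a,b_k,b_i])$ and $(a,b_k)\ostar(a,b_i) = (b_i,[a,b_i,b_k])$. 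The claim is $\partial_n^{\rm lb}(g)\in D_{n-1}^{\rm lb}(X,\rho)$.

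First I would compute $\partial_n^{\rm lb}(\alpha)$ and $\partial_n^{\rm lb}(\beta_i)$ term by term and organize the $2n$ summands by which index $j$ is being deleted. The key bookkeeping fact is that deleting the $i$-th entry (the inserted $\rho$-term) from $\beta_i$ produces, up to the expected sign, exactly the ``starred'' companion of $\alpha$ that appears inside the definition of $D_{n-1}^{\rm lb}(X)$ — so those two particular terms should cancel against the corresponding $D^{\rm lb}(X)$-type terms, or more precisely combine to give an element of $D_{n-1}^{\rm lb}(X,\rho)$. For a deleted index $j\neq i$, the $j$-th summand of $\partial_n^{\rm lb}(\alpha)$ is a pair of $(n-1)$-chains over $a$, and the corresponding summand of $\partial_n^{\rm lb}(\beta_i)$ is a pair of $(n-1)$-chains over $b_i$; I would check that, after reindexing, their sum is again of the form ``a chain plus its $\rho$-insertion companion,'' hence a generator of $D_{n-1}^{\rm lb}(X,\rho)$. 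This uses the two biquandle identities built into $[a,b,c]=a-b+c$, namely the self-distributivity-type relations that make $\ustar,\ostar$ commute appropriately, together with the identities in \eqref{eq1}; concretely one needs $\big((a,b_k)\ustar(a,b_j)\big)\ustar\big((a,b_i)\ustar(a,b_j)\big)$-type compatibilities so that starring-then-inserting equals inserting-then-starring.

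The main obstacle I expect is precisely this compatibility check: verifying that the operations $\ustar$ and $\ostar$ intertwine correctly with the $\rho$-insertion operation, i.e.\ that applying $\partial$ and then forming the $D(X,\rho)$-relation gives the same thing as forming the relation and then applying $\partial$, up to elements already known to be in $D^{\rm lb}(X,\rho)$. Because $[a,b,c]=a-b+c$ is linear, all of these reduce to identities in $\mathbb Z_p$ — for instance $[a,[a,b_j,b_i],[a,b_k,b_i]] = [a, b_j, b_k]$ shifted by the appropriate amount — so the computation is routine but must be done carefully with the signs $(-1)^i$ and $(-1)^j$ and the two cases $j<i$, $j>i$. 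I would also double-check the degenerate/boundary cases $n\le 1$ (where $\partial$ vanishes) and $i\in\{1,n\}$ (where one of the two ``before/after'' blocks of starred entries is empty), and invoke the already-proven fact that $D_*^{\rm lb}(X)$ and $D_*^{\rm lb}(X)$-type computations from \cite{NelsonOshiroOyamaguchi} behave as claimed to absorb the $D^{\rm lb}(X)$-part of the boundary. Once all $2n$ summands are matched up, we conclude $\partial_{n-1}^{\rm lb}\circ\partial_n^{\rm lb}=0$ is inherited and $\partial_n^{\rm lb}\big(D_n^{\rm lb}(X,\rho)\big)\subseteq D_{n-1}^{\rm lb}(X,\rho)$, which is exactly the assertion that $D_*^{\rm lb}(X,\rho)$ is a subchain complex.
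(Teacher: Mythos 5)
Your plan is structurally the same as the paper's proof: write a generator of $D_n^{\rm lb}(X,\rho)$ as $\alpha+\beta_i$, expand both boundaries, match the $2n$ summands by deleted index $j$, observe that the two $j=i$ summands of each boundary cancel exactly against their counterparts, and check that for $j\neq i$ the paired summands recombine into $\rho$-insertion generators of $D_{n-1}^{\rm lb}(X,\rho)$ via the compatibility of $\ustar,\ostar$ with the insertion operation. That is precisely the paper's argument.

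One concrete error, though: you write $\rho\big((a,b_i)\big)=(b_i,b_i)$, whereas the paper defines $\rho\big((a,b)\big)=(b,a)$, so the inserted entry is $(b_i,a)$, not the diagonal element $(b_i,b_i)$. This is not cosmetic. The $j=i$ cancellation you rely on requires $\big((a,b_k)\ustar(a,b_i)\big)\ustar\rho\big((a,b_i)\big)=(a,b_k)$, i.e.\ $(b_i,[a,b_k,b_i])\ustar(b_i,a)=\big(a,[b_i,[a,b_k,b_i],a]\big)=(a,b_k)$, which is exactly the second identity in (\ref{eq1}); with $(b_i,b_i)$ in place of $(b_i,a)$ the resulting tuple sits over $b_i$ rather than $a$ and the cancellation fails. (Your claim that every entry of $\beta_i$ has first coordinate $b_i$ is still true with the correct $\rho$, so the rest of your bookkeeping survives once this is fixed.) A smaller point: the $j=i$ identity-term of $\partial(\beta_i)$ cancels against the $j=i$ starred term of $\partial(\alpha)$, not against anything coming from $D^{\rm lb}_{n-1}(X)$; the subcomplex $D_*^{\rm lb}(X)$ plays no role in this lemma.
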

\begin{proof} 
To shorten notation, we put $ab:=(a,b)$. 

First, we have 
\begin{align}
&\partial_n^{\rm lb} \Big( \big( ab_1, \ldots , ab_n \big)  \Big) \notag\\
&= \sum_{j=1}^{i-1} (-1)^j  \big(  \ldots, ab_{j-1}, ab_{j+1}, \ldots, ab_i, \ldots   \big) \label{eq:subchain1-1} \\
&\hspace{1em}+ (-1)^i  \big(  \ldots, ab_{i-1}, ab_{i+1}, \ldots   \big)\label{eq:subchain1-2} \\
&\hspace{1em}+\sum_{j=i+1}^{n} (-1)^j  \big(  \ldots, ab_i,  \ldots, ab_{j-1}, ab_{j+1},  \ldots \big) \label{eq:subchain1-3} \\
&\hspace{1em}-\sum_{j=1}^{i-1} (-1)^j \big(  \ldots, ab_{j-1}\ustar ab_j,    ab_{j+1}\ostar ab_j ,\ldots, ab_{i}\ostar ab_j ,\ldots  \big)  \label{eq:subchain1-4} \\
&\hspace{1em}- (-1)^i \big(  \ldots, ab_{i-1}\ustar ab_i,    ab_{i+1}\ostar ab_i ,\ldots  \big) \label{eq:subchain1-5} \\
&\hspace{1em}-\sum_{j=i+1}^{n} (-1)^j \big(  \ldots, ab_{i}\ustar ab_j ,\ldots, ab_{j-1}\ustar ab_j,    ab_{j+1}\ostar ab_j ,\ldots  \big)  \label{eq:subchain1-6} 
\end{align}
and 
\begin{align}
&\partial_n^{\rm lb} \Big( 
\big(  ab_{1} \ustar ab_i,   \ldots , ab_{i-1} \ustar ab_i  ,
  \rho( ab_i ), ab_{i+1} \ostar ab_i , \ldots , ab_{n} \ostar ab_i  \big)
 \Big) \notag\\
&=  \sum_{j=1}^{i-1} (-1)^j  \big(  \ldots, ab_{j-1}\ustar ab_i, ab_{j+1}\ustar ab_i, \ldots, ab_{i-1} \ustar ab_i,  \rho(ab_i), ab_{i+1} \ostar ab_i,  \ldots   \big) \label{eq:subchain2-1}\\
&\hspace{1em}+ (-1)^i  \big(  \ldots, ab_{i-1}\ustar ab_i, ab_{i+1}\ostar ab_i, \ldots   \big) \label{eq:subchain2-2}\\
&\hspace{1em}+\sum_{j=i+1}^{n} (-1)^j  \big( \ldots, ab_{i-1} \ustar ab_i,  \rho(ab_i), ab_{i+1} \ostar ab_i, \ldots, ab_{j-1}\ostar ab_i, ab_{j+1}\ostar ab_i , \ldots   \big)\label{eq:subchain2-3}\\
&\hspace{1em}-\sum_{j=1}^{i-1} (-1)^j \big(  \ldots, (ab_{j-1}\ustar ab_i)\ustar(ab_{j}\ustar ab_i),    (ab_{j+1}\ustar ab_i)\ostar(ab_{j}\ustar ab_i) ,\ldots, \notag \\
&\hspace{1em}(ab_{i-1}\ustar ab_i)\ostar(ab_{j}\ustar ab_i),  \rho(ab_{i})\ostar(ab_{j}\ustar ab_i), (ab_{i+1}\ostar ab_i)\ostar(ab_{j}\ustar ab_i) ,\ldots  \big)  \label{eq:subchain2-4}\\
&\hspace{1em}-(-1)^i \big(  \ldots, (ab_{i-1}\ustar ab_i)\ustar\rho(ab_{i}),    (ab_{i+1}\ostar ab_i)\ostar\rho(ab_{i}) ,\ldots  \big) \label{eq:subchain2-5}\\
&\hspace{1em}-\sum_{j=i+1}^{n} (-1)^j \big(  \ldots, (ab_{i-1}\ustar ab_i)\ustar(ab_{j}\ostar ab_i), \rho(ab_{i})\ustar(ab_{j}\ostar ab_i), \notag\\ 
&\hspace{1em}(ab_{i+1}\ostar ab_i)\ustar(ab_{j}\ostar ab_i) , \ldots, (ab_{j-1}\ostar ab_i)\ustar(ab_{j}\ostar ab_i),  \notag\\  
&\hspace{1em}
(ab_{j+1}\ostar ab_i)\ostar(ab_{j}\ostar ab_i) ,\ldots  \big) . \label{eq:subchain2-6} 
\end{align}
Then by direct calculation, we can see that
\begin{align}
&(\ref{eq:subchain1-1})+(\ref{eq:subchain2-1}), (\ref{eq:subchain1-3})+(\ref{eq:subchain2-3}), (\ref{eq:subchain1-4})+(\ref{eq:subchain2-4}), (\ref{eq:subchain1-6})+(\ref{eq:subchain2-6}) \in D_{n-1}^{\rm lb}(X,\rho),  \notag\\ &(\ref{eq:subchain1-2})+(\ref{eq:subchain2-5}) = 0, \mbox{ and }  (\ref{eq:subchain1-5})+(\ref{eq:subchain2-2}) = 0. \notag
\end{align}
Hence, 
\begin{align}
&\partial_n^{\rm lb} \Big( \big( ab_1, \ldots , ab_n \big)  \Big) \notag\\
&\hspace{1em}+\partial_n^{\rm lb} \Big( 
\big(  ab_{1} \ustar ab_i,   \ldots , ab_{i-1} \ustar ab_i  ,
  \rho( ab_i ), ab_{i+1} \ostar ab_i , \ldots , ab_{n} \ostar ab_i  \big)
 \Big) \notag\\
 &=((\ref{eq:subchain1-1})+(\ref{eq:subchain2-1}))+((\ref{eq:subchain1-3})+(\ref{eq:subchain2-3}))+((\ref{eq:subchain1-4})+(\ref{eq:subchain2-4}))+((\ref{eq:subchain1-6})+(\ref{eq:subchain2-6})) \notag\\
 & \in D_{n-1}^{\rm lb}(X,\rho).  \notag
\end{align}
This implies that $\partial_n^{\rm lb} \big(D_n^{\rm lb}(X,\rho)\big) \subset D_{n-1}^{\rm lb}(X,\rho)$, and thus, $D_*^{\rm lb}(X,\rho)$ is a subchain complex of $C_*^{\rm lb}(X)$.
\end{proof}

Therefore the chain complexes
$$C_*^{\rm LB} (X)=\{C_n^{\rm LB}(X):=C_n^{\rm lb}(X)/D_n^{\rm lb}(X), \partial_n^{\rm LB}:=\partial_n^{\rm lb}\}_{n\in \mathbb Z}$$
and 
$$C_*^{\rm SLB} (X)=\{C_n^{\rm SLB}(X):=C_n^{\rm lb}(X)/(D_n^{\rm lb}(X)+D_n^{\rm lb}(X,\rho)), \partial_n^{\rm SLB}:=\partial_n^{\rm lb}\}_{n\in \mathbb Z}$$ 
are induced.
The homology groups 
$H_n^{\rm LB} (X)$ of $C_*^{\rm LB} (X)$ and 
$H_n^{\rm SLB} (X)$ of $C_*^{\rm SLB} (X)$ are called the \textit{$n$th local biquandle homology group} and  \textit{$n$th symmetric local biquandle homology group}, respectively, of $(X, \{\ustar\} , \{ \ostar\})$ or 
$(X,[\, ])$.

For an abelian group $A$, we define the cochain complexes 
$C_{\rm LB}^\ast(X, A):=\{C_{\rm LB}^n(X, A), \delta^n_{\rm LB}\}_{n\in \mathbb Z}$ and 
 $C_{\rm SLB}^\ast(X, A):=\{C_{\rm SLB}^n(X, A), \delta^n_{\rm SLB}\}_{n\in \mathbb Z}$ by 
\[
\begin{array}{l}
C_{\rm LB}^n(X, A) ={\rm Hom}(C_n^{\rm LB}(X), A), \\
\delta^n_{\rm LB} : C_{\rm LB}^n(X, A) \to C_{\rm LB}^{n+1}(X, A); \delta^n_{\rm LB}(f)=f \circ \partial_{n+1}^{\rm LB},\\
C_{\rm SLB}^n(X, A)={\rm Hom}(C_n^{\rm SLB}(X), A), \\
\delta^n_{\rm SLB} : C_{\rm SLB}^n(X, A) \to C_{\rm SLB}^{n+1}(X, A); \delta^n_{\rm SLB}(f)=f \circ \partial_{n+1}^{\rm SLB}.
\end{array}
\]
The \textit{nth cohomology groups} $H^n_{\rm LB}(X, A)$ and $H^n_{\rm SLB}(X, A)$ of $(X, \{\ustar\}, \{\ostar\})$ or $(X, [\,])$ with coefficient group $A$ are defined by $H_{\rm LB}^n(X, A)=H^n(C^\ast_{\rm LB}(X, A))$ and 
$H_{\rm SLB}^n(X, A)=H^n(C^\ast_{\rm SLB}(X, A))$, respectively.

Note again that the next lemmas hold in only the case that $(X,[\,])=(\mathbb Z_p, [\,])$ with $[a,b,c]=a-b+c$. (Those can not be  satisfied generally for knot-theoretic horizontal-ternary-quasigroups.)
Before we state the following lemma, we define the magnitude relation on $\mathbb Z_p$ and $\mathbb Z_p^4$.
We note that the elements of $\mathbb Z_p$ is represented by the integers $0,1, \ldots , p-1$. 
The relation $<$ on $\mathbb Z_p$ are defined by the ordinary magnitude relation $<$ on $\mathbb Z$ for the representatives $0,1, \ldots , p-1$ regarded as integers. 
For two different $4$-tuples $(a_1,a_2,a_3,a_4), (b_1,b_2,b_3,b_4) \in \mathbb Z_p^4$, we define the order $<$ by 
\begin{align*}
&(a_1,a_2,a_3,a_4) < (b_1,b_2,b_3,b_4) \\
&\iff   a_j<b_j \mbox{ for } j= \min\{ i\in \{1,\ldots , 4\} ~|~ a_i \not = b_i \}.
\end{align*}
\begin{lemma}\label{lem:chaingroup}
It holds that 
\begin{align}
C_2^{\rm SLB} (X) \cong
& \bigoplus_{\begin{minipage}{4.2cm}
{\tiny {\rm 
$a,b,c\in \mathbb Z_p$
 s.t. $a\not=b$, $b\not=c$, \\
$(a,b,c,[a,b,c]) = \min Q(a,b,c)$
}
}
\end{minipage}
}\mathbb Z \left\langle \big( (a,b), (a,c) \big) \right\rangle \nonumber\\
&  \oplus \bigoplus_{\begin{minipage}{1.7cm}
{\tiny
$a,b\in \mathbb Z_p$
 {\rm s.t.} 
$a<b$}
\end{minipage}} \mathbb Z_2 \left\langle \big( (a,a), (a,b) \big) \right\rangle ,  \label{eq:C2}\\
C_1^{\rm SLB} (X) \cong & \bigoplus_{\begin{minipage}{1.7cm}
{\tiny
$a,b\in \mathbb Z_p$
 {\rm s.t.} 
$a<b$}
\end{minipage}
} \mathbb Z  \left\langle  (a,b) \right\rangle  \oplus \bigoplus_{\begin{minipage}{1cm}
{\tiny
$a\in \mathbb Z_p$}
\end{minipage}} \mathbb Z_2 \left\langle  (a,a)  \right\rangle, \label{eq:C1}
\end{align}
where 
\begin{align*}
&Q(a,b,c)\\
&= \{ (a,b,c, [a,b,c]), (b,a,[a,b,c],c), (c,[a,b,c],a,b), ([a,b,c],c,b,a)\},
\end{align*}
 and where $\mathbb Z \langle x \rangle$ represents 
the abelian group $\{n x \,|\, n\in \mathbb Z\}$, that is, 
the infinite cyclic group $\mathbb Z$  which is generated by $x$, and $\mathbb Z_2 \langle x \rangle$ represents the abelian group $\{n x \,|\, n\in \mathbb Z_2\}$, that is, the cyclic group $\mathbb Z_2$ which is generated by $x$.
\end{lemma}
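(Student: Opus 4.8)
The plan is to compute each quotient $C_n^{\rm SLB}(X)=C_n^{\rm lb}(X)/\bigl(D_n^{\rm lb}(X)+D_n^{\rm lb}(X,\rho)\bigr)$ for $n=1,2$ by hand, starting from the fact that $C_n^{\rm lb}(X)$ is $\mathbb Z$-free on the symbols $\big((a,b_1),\dots,(a,b_n)\big)$ and reading off what the defining generators of $D_n^{\rm lb}(X)$ and $D_n^{\rm lb}(X,\rho)$ do to them. For $n=1$ this is immediate: $D_1^{\rm lb}(X)=0$ (the index range is empty), and $D_1^{\rm lb}(X,\rho)$ is generated by $(a,b)+\rho((a,b))=(a,b)+(b,a)$. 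Hence $C_1^{\rm SLB}(X)$ is the free abelian group on $\{(a,b)\mid a,b\in\mathbb Z_p\}$ modulo $(b,a)=-(a,b)$; pairing $(a,b)$ with $(b,a)$ gives a copy of $\mathbb Z$ with generator $(a,b)$ for each $a<b$, while on the diagonal $(a,a)+(a,a)=0$, so each $(a,a)$ spans a $\mathbb Z_2$. This yields (\ref{eq:C1}).

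For $n=2$ write $g(a,b_1,b_2):=\big((a,b_1),(a,b_2)\big)$. Unwinding the definitions with $[a,b,c]=a-b+c$, $\rho((a,b))=(b,a)$, $(a,b)\ustar(a,c)=(c,[a,b,c])$ and $(a,b)\ostar(a,c)=(c,[a,c,b])$, the relations imposed on these free generators are: (i) $g(a,b,b)=0$, from $D_2^{\rm lb}(X)$; (ii) $g(a,b_1,b_2)+g\big(b_1,a,[a,b_1,b_2]\big)=0$, from $D_2^{\rm lb}(X,\rho)$ with $i=1$; and (iii) $g(a,b_1,b_2)+g\big(b_2,[a,b_1,b_2],a\big)=0$, from $D_2^{\rm lb}(X,\rho)$ with $i=2$. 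The triple $(a,b_1,b_2)$ is recorded faithfully by the $4$-tuple $\big(a,b_1,b_2,[a,b_1,b_2]\big)$, and, using precisely the identities (\ref{eq1}), the substitutions in (ii) and (iii) act on the four tuples listed in $Q(a,b,c)$ (in that order) as the commuting involutions $(1\,2)(3\,4)$ and $(1\,3)(2\,4)$; together they generate a Klein four-group of which $Q(a,b_1,b_2)$ is an orbit, acting simply transitively when $\#Q(a,b_1,b_2)=4$.

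Sorting the generators by the type of their $Q$-orbit now reveals the shape of the answer. If $\#Q=1$ (that is, $(a,b_1,b_2)=(a,a,a)$), then (i) kills $g$. If $\#Q=2$ of the form $\{(x,y,y,x),(y,x,x,y)\}$, then (i) again kills $g=g(x,y,y)$. If $\#Q=2$ of the form $\{(x,x,y,y),(y,y,x,x)\}$ with $x\neq y$, then (ii) specializes to $2g(x,x,y)=0$ and (iii) gives $g(x,x,y)=-g(y,y,x)$, so this orbit contributes at most a $\mathbb Z_2$ with generator $\big((x,x),(x,y)\big)$, $x<y$. If $\#Q=4$, then all four generators on the orbit are equal up to sign to the one at $\big(a,b,c,[a,b,c]\big)=\min Q(a,b_1,b_2)$, which one checks necessarily has $a\neq b$ and $b\neq c$, and the orbit contributes at most a $\mathbb Z$ with that generator. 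In particular the stated right-hand side surjects onto $C_2^{\rm SLB}(X)$ via the map $\psi$ sending each listed generator to the class of the corresponding $g$ (this is well defined since $[g(a,a,b)]$ is $2$-torsion by (ii)).

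To finish I would build a left inverse of $\psi$. Since $C_2^{\rm lb}(X)$ is free it suffices to define a homomorphism $\phi$ on the basis $\{g(a,b_1,b_2)\}$ and check it annihilates the generators (i)--(iii): put $\phi\big(g(a,b,b)\big)=0$; on a $\{(x,x,y,y),(y,y,x,x)\}$-orbit send $g$ to $\big((x,x),(x,y)\big)\in\mathbb Z_2$ with $x<y$; and on a size-$4$ orbit put $\phi\big(g(a,b_1,b_2)\big)=\varepsilon\cdot\big((a',b'),(a',c')\big)$, where $\big(a',b',c',[a',b',c']\big)=\min Q(a,b_1,b_2)$ and $\varepsilon=\pm1$ is the value at the Klein-four element carrying $\big(a,b_1,b_2,[a,b_1,b_2]\big)$ to that minimum of the character sending both $(1\,2)(3\,4)$ and $(1\,3)(2\,4)$ to $-1$ (this character is well defined because $(1\,2)(3\,4)\cdot(1\,3)(2\,4)=(1\,4)(2\,3)$). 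Relation (i) is killed by construction, and in (ii), (iii) the two summands lie on the same orbit with signs differing by exactly one factor $-1$ --- or, in the degenerate cases, coincide and add to $0$ in $\mathbb Z_2$. Thus $\phi$ descends to an inverse of $\psi$, giving (\ref{eq:C2}). The main obstacle is precisely this sign bookkeeping: one must verify that relations (ii), (iii) act on each orbit with no ``sign monodromy'', so that a size-$4$ orbit really contributes $\mathbb Z$ rather than $\mathbb Z_2$, while the degenerate orbits collapse exactly as stated; routing the signs through the above character of the Klein four-group is the cleanest way I see to make this verification uniform.
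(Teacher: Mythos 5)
Your proposal is correct and follows essentially the same route as the paper: the map $\phi$ you construct (zero on $b_1=b_2$, the signed generator at $\min Q$ on free orbits, the $\mathbb Z_2$-generator on orbits of type $\{(x,x,y,y),(y,y,x,x)\}$) is exactly the paper's homomorphism $\Phi$, and your orbit/character analysis supplies precisely the well-definedness and sign-consistency checks that the paper leaves to the reader. The one genuinely useful addition is the observation that relations (ii) and (iii) act on $Q(a,b,c)$ as a Klein four-group with signs given by a character, which makes the absence of sign monodromy on size-$4$ orbits automatic.
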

\begin{proof}
Let $A$ denote the right-hand side of the equality (\ref{eq:C2}). 
We define a homomorphism $\Phi: C_2^{\rm SLB}(X) \to A$ as follows.
For $\big((a,b), (a,c)\big)\in  C_2^{\rm SLB}(X) $ with $a \not = b$, set
\begin{align*}
&\Phi\Big(\big((a,b), (a,c)\big)\Big)\\
&=\left\{
\begin{array}{ll}
+\big((a,b), (a,c)\big) &\mbox{if } b\not = c, \min Q(a,b,c)= (a,b,c,[a,b,c]),\\
-\big((b,a),(b,[a,b,c])\big)&\mbox{if } b\not = c, \min Q(a,b,c)=(b,a,[a,b,c],c),\\
-\big((c,[a,b,c]),(c,a)\big)&\mbox{if } b\not = c, \min Q(a,b,c)=(c,[a,b,c],a,b),\\
+\big(([a,b,c],c),([a,b,c],b)\big)&\mbox{if } b\not = c, \min Q(a,b,c)=([a,b,c],c,b,a)\\
0& \mbox{otherwise}, 
\end{array}
\right.
\end{align*}
and for $\big((a,a), (a,b)\big)\in  C_2^{\rm SLB}(X) $,  set
\begin{align*}
&\Phi\Big(\big((a,a), (a,b)\big)\Big)=\left\{
\begin{array}{ll}
+\big((a,a), (a,b)\big) &\mbox{if } a< b,\\
+\big((b,b),(b,a)\big)&\mbox{if } a>b,\\
0& \mbox{otherwise}. 
\end{array}
\right.
\end{align*}
Then we can see that $\Phi$ is an isomorphism.  
We leave the detailed proof to the reader.

Similarly we can also see the equality (\ref{eq:C1}). 
\end{proof}

\begin{lemma}\label{lem:cocycle}
The homomorphism $\theta_p: C_2^{\rm SLB} (X) \to \mathbb Z_p$ defined by 
\begin{align*}
\theta_p \Big(\big((a,b), (a,c)\big)\Big) &=(a-b) \frac{  (a-b+2c)^p +( a+b)^p- 2( a+c)^p   }{p}
\end{align*}
is a $2$-cocycle of $(X,[\,])=(\mathbb Z_p,[\,])$ with $[a,b,c]=a-b+c$, where for the fraction, the numerator is calculated
in $\mathbb  Z$ and it is divisible by $p$.
\end{lemma}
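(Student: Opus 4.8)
The plan is to establish, in order: that the integer $(a-b+2c)^p+(a+b)^p-2(a+c)^p$ is divisible by $p$ (so the displayed fraction makes sense), that the resulting formula descends to the quotient $C_2^{\rm SLB}(X)$, and finally that the descended homomorphism satisfies $\delta^2_{\rm SLB}(\theta_p)=\theta_p\circ\partial_3^{\rm SLB}=0$. Divisibility is immediate from Fermat's little theorem, since modulo $p$ the numerator equals $(a-b+2c)+(a+b)-2(a+c)=0$. For the remaining two points it is convenient to abbreviate
\[
\Psi(X,Y):=\frac{(X+Y)^p+(X-Y)^p-2X^p}{p}\in\mathbb Z_p ,
\]
so that, using $a-b+2c=(a+c)+(c-b)$ and $a+b=(a+c)-(c-b)$, the formula reads $\theta_p\big(((a,b),(a,c))\big)=(a-b)\,\Psi(a+c,\,c-b)$; note that $\Psi$ is even in its second variable, $\Psi(X,-Y)=\Psi(X,Y)$.

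Since $C_2^{\rm lb}(X)$ is free on the symbols $((a,b),(a,c))$, the formula automatically defines a homomorphism $C_2^{\rm lb}(X)\to\mathbb Z_p$, and I must check that it kills the generators of $D_2^{\rm lb}(X)+D_2^{\rm lb}(X,\rho)$. On the generator $((a,b),(a,b))$ of $D_2^{\rm lb}(X)$ the value is $(a-b)\Psi(a+b,0)=0$. The degree-$2$ generators of $D_2^{\rm lb}(X,\rho)$ are, for $i=1$ and $i=2$, the elements $((a,b),(a,c))+((b,a),(b,[a,b,c]))$ and $((a,b),(a,c))+((c,[a,b,c]),(c,a))$; with $[a,b,c]=a-b+c$ one computes $\theta_p\big(((b,a),(b,[a,b,c]))\big)=(b-a)\Psi(a+c,c-b)$ and $\theta_p\big(((c,[a,b,c]),(c,a))\big)=(b-a)\Psi(a+c,b-c)$, so each of the two sums vanishes by the evenness of $\Psi$. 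Hence $\theta_p$ is well defined on $C_2^{\rm SLB}(X)$.

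Because $\theta_p$ descends and $\partial_3^{\rm SLB}$ is induced by $\partial_3^{\rm lb}$, it suffices to show that $\theta_p\circ\partial_3^{\rm lb}$ annihilates each free generator $((a,b_1),(a,b_2),(a,b_3))$ of $C_3^{\rm lb}(X)$. Expanding $\partial_3^{\rm lb}$ gives six terms; I substitute the $\Psi$-form of $\theta_p$ into each, simplify every $[a,b_i,b_j]=a-b_i+b_j$, observe that the two terms coming from $i=1$ carry the same value $\Psi(a+b_3,b_3-b_2)$ and therefore merge (with combined coefficient $2(b_1-a)$), and pull a common factor $(b_1-a)$ out of the whole expression. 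After the substitution $r=a+b_3$, $s=b_3-b_2$, $t=b_2-b_1$ (applying $\Psi(X,-Y)=\Psi(X,Y)$ once), the result equals
\[
(b_1-a)\big(2\Psi(r,s)-\Psi(r,s+t)-\Psi(r,s-t)+\Psi(r-s,t)+\Psi(r+s,t)\big).
\]
Finally, replacing each $\Psi$ by its defining expression and clearing the denominator $p$, the corresponding $\mathbb Z$-linear combination of the numerators $(X+Y)^p+(X-Y)^p-2X^p$ vanishes identically as a polynomial in $r,s,t$: each of the seven $p$-th powers $(r+s)^p$, $(r-s)^p$, $r^p$, $(r\pm s\pm t)^p$ occurs with total coefficient $0$. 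Hence the bracket is $0$ already over $\mathbb Z$, a fortiori in $\mathbb Z_p$, so $\theta_p\circ\partial_3^{\rm lb}=0$ and $\theta_p$ is a $2$-cocycle.

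I do not anticipate a genuinely hard step; the content is a chain of careful but elementary computations. The only place demanding real care is the cocycle check: expanding $\partial_3^{\rm lb}$ together with the operations $\ustar_a$ and $\ostar_a$ (equivalently, keeping the arguments of $[\,]$ straight), then the bookkeeping that isolates the factor $(b_1-a)$ and collapses the six terms into the five-term combination of $\Psi$'s above. Once that combination is reached, the term-by-term cancellation of $p$-th powers finishes the argument at once; the single thing to keep watching throughout is that $\Psi$ is even in its second variable, so that the sign flips produced by $\rho$ and by the substitution are harmless.
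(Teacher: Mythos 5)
Your proof is correct, but it takes a genuinely more self-contained route than the paper. The paper's own proof outsources the two main facts --- that $\theta_p\circ\partial_3=0$ and that $\theta_p$ vanishes on the degenerate generators $\big((a,b),(a,b)\big)$ --- to \cite{Oshiro20}, where $\theta_p$ is obtained by pulling back Mochizuki's $3$-cocycle of the dihedral quandle $R_p$; the only thing the paper verifies by hand is the pair of $\rho$-symmetry conditions (\ref{equal:cocycleconditon1})--(\ref{equal:cocycleconditon2}), i.e.\ exactly your check that $\theta_p$ kills the generators of $D_2^{\rm lb}(X,\rho)$, which you carry out via the same sign flip $(a-b)\mapsto(b-a)$. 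You instead prove everything from scratch: divisibility by Fermat, degeneracy, $\rho$-symmetry, and the full coboundary condition. I checked your computation of $\theta_p\circ\partial_3^{\rm lb}$ on a generator: the six terms do collapse (the two $i=1$ terms both produce $\Psi(a+b_3,b_3-b_2)$ with coefficients $-(a-b_2)$ and $2b_1-a-b_2$, merging to $2(b_1-a)$), the common factor $(b_1-a)$ does pull out, the substitution $r=a+b_3$, $s=b_3-b_2$, $t=b_2-b_1$ yields exactly your five-term bracket, and clearing the denominator $p$ every one of the seven $p$-th powers $(r\pm s)^p$, $r^p$, $(r\pm s\pm t)^p$ cancels with total coefficient $0$ over $\mathbb Z$. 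Your argument also correctly reduces checking $\delta^2_{\rm SLB}(\theta_p)=0$ to checking $\theta_p\circ\partial_3^{\rm lb}=0$ on free generators, using that $\theta_p$ already descends to the quotient. What your approach buys is independence from \cite{Oshiro20} and from the quandle-cohomology interpretation, at the cost of the polynomial bookkeeping; what the paper's approach buys is brevity and the structural explanation (via $\theta_p^{\rm M}$) that is reused later in Proposition~\ref{thm:cocycleinvariantforaffineequivalence}. The one point you leave tacit --- that $\Psi$, hence $\theta_p$, is independent of the choice of integer representatives mod $p$ --- follows from $(Z+p)^p\equiv Z^p \pmod{p^2}$ and is equally tacit in the paper, so I do not count it as a gap.
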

\begin{proof}
As discussed in \cite{Oshiro20},  the $2$-cocycle conditions  
\begin{itemize}
\item $\delta^2_{\rm SLB} (\theta_p) = 0$, and 
\item $\theta_p\Big(\big((a,b), (a,b)\big)\Big)=0$ for any $a,b\in X$
\end{itemize}
 are satisfied. 
Hence, it suffices to show that the cocycle conditions
\begin{align}
&\theta_p\Big(\big((a,b), (a,c)\big) + \big(\rho((a,b)), (a,c)\ostar (a,b) \big)\Big)=0, \mbox{ and}\label{equal:cocycleconditon1}\\
&\theta_p\Big(\big((a,b), (a,c)\big) + \big((a,b)\ustar (a,c), \rho((a,c)) \big)\Big)=0\label{equal:cocycleconditon2}
\end{align}
for any $a,b,c \in X$. 

Since
\begin{align*}
&\theta_p\Big(\big(\rho((a,b)), (a,c)\ostar (a,b) \big)\Big)\\
&=\theta_p\Big(\big((b,a), (b,a-b+c) \big)\Big)\\
&=(b-a) \frac{  (b-a+2(a-b+c))^p +(b+a)^p- 2(b+(a-b+c))^p   }{p}\\
&=-(a-b) \frac{  (a-b+2c)^p +( a+b)^p- 2( a+c)^p   }{p}\\
&= - \theta_p\Big(\big((a,b), (a,c)\big)\Big),
\end{align*}
the first equality (\ref{equal:cocycleconditon1}) holds, and the second equality  (\ref{equal:cocycleconditon2}) is also proven by the same method.

\end{proof}

\section{Local biquandle cocycle invariants of unoriented knots}\label{sec:cocycleinvariants}

We first remark that the contents discussed in this section except for Proposition~\ref{thm:cocycleinvariantforaffineequivalence} can be also applied in more general for a knot-theoretic horizontal-ternary-quasigroup $(X,[\,])$ with a tribracket $[\,]$ satisfying the condition (\ref{eq1}), and an $X$-colored diagram $(D,C)$ of a knot.
Here we note that ${\rm Col}_X(D)$ in the equation (\ref{eq:2}) and ${\rm Col}_X^{\rm NT}(D)$ in the equation (\ref{eq:3}) mean the set of $X$-colorings of a diagram $D$ and the set of nontrivial $X$-colorings of $D$, respectively, and moreover, they mean ${\rm Col}_p(D)$ and ${\rm Col}_p^{\rm NT}(D)$, respectively, for Dehn $p$-colorings. 
See Remark~\ref{Rem:localbiquandle} for more details.

Let $p$ be an odd prime number, and put $X=\mathbb Z_p$ and $[a,b,c]:=a-b+c$. We define $\rho: X^2 \to X^2$ by $\rho \big( (a,b) \big) = (b,a)$.
Let $(D, C)$ be a Dehn $p$-colored diagram of an (unoriented) knot $K$.

For each crossing $\chi$ of $(D,C)$, we take the weight $w_\chi$ as follows. There are locally the four regions around $\chi$. We choose one of them, and call it a {\it specified region}. 
Let $x_1$, $x_2$, $x_3$ and $x_4$ be the regions around $\chi$ such that $x_1$ is locally the specified region,  $x_2$ is the region adjacent to $x_1$ by an under-arc and $x_3$ is the region adjacent to $x_1$ by the over-arc as depicted in Figure~\ref{coloring5}, where we put $\star$ on the specified region for easy visual recognition. Put $a:=C(x_1)$, $b:=C(x_2)$ and $c:=C(x_3)$.
Then the weight of $\chi$ is defined by 
$w_\chi= \varepsilon \big((a,b), (a,c)\big)$, where for the normal vector $n_u$ from $x_1$ to $x_2$ and the normal vector $n_o$ from $x_1$ to $x_3$, $\varepsilon=+$ if $(n_o, n_u)$ matches the right-handed orientation of $\mathbb R^2$, and $\varepsilon=-$ otherwise (see Figure~\ref{coloring5}).
\begin{figure}[t]
  \begin{center}
    \includegraphics[clip,width=7cm]{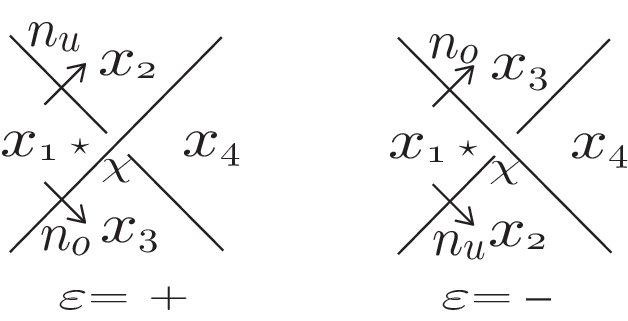}
    \caption{}
    \label{coloring5}
  \end{center}
\end{figure}

We denote by $W(D,C)$ the sum of the weights of all the crossings of $(D,C)$, and regard it an element of $C_2^{\rm SLB} (X)$. We then have the following lemmas.
\begin{lemma}\label{lem:weightsum1}
The value $W(D,C)$ in $C_2^{\rm SLB} (X)$ does not depend on the choice of specified regions for crossings.
\end{lemma}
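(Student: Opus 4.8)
The plan is to reduce the lemma to a purely local computation at a single crossing, and then to recognize the effect of changing the specified region as a relator of the submodule $D_2^{\rm lb}(X)+D_2^{\rm lb}(X,\rho)$ by which $C_2^{\rm SLB}(X)$ is defined.

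First I would observe that $W(D,C)=\sum_\chi w_\chi$ and that the specified region is chosen independently at each crossing, so it suffices to fix one crossing $\chi$ and show that replacing its specified region by any of the other three local regions leaves $w_\chi$ unchanged in $C_2^{\rm SLB}(X)$. After a rotation of the plane (which changes nothing), I normalize the local picture so that the over-arc is horizontal and the under-arc vertical, label the four local regions $x_1,x_2,x_3,x_4$ as in Figure~\ref{coloring2} with colors $a=C(x_1)$, $b=C(x_2)$, $c=C(x_3)$ and $C(x_4)=[a,b,c]=a-b+c$. Running through the four choices of specified region and reading off the colors of its under-neighbour and over-neighbour, one finds that $w_\chi$ becomes, up to the sign $\varepsilon$, one of $\big((a,b),(a,c)\big)$, $\big((b,a),(b,[a,b,c])\big)$, $\big((c,[a,b,c]),(c,a)\big)$, $\big(([a,b,c],c),([a,b,c],b)\big)$. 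Any two of the four choices are joined by a sequence of elementary moves of two types: pass the specified region across the under-arc, or across the over-arc.

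Next I would analyze those two elementary moves. Crossing an arc reverses exactly one of the two normal vectors $n_u,n_o$ used to define $\varepsilon$ — the one transverse to the arc being crossed — hence flips the sign $\varepsilon$. At the same time, a direct comparison of the colors of the old and new specified regions shows that the old weight and the new weight are precisely the two summands of a generator of $D_2^{\rm lb}(X,\rho)$ with $n=2$: the generator indexed by $i=1$ for a move across the under-arc, and by $i=2$ for a move across the over-arc, using $(a,b_1)\ustar(a,b_2)=(b_2,[a,b_1,b_2])$, $(a,b_2)\ostar(a,b_1)=(b_1,[a,b_1,b_2])$ and $\rho\big((a,b)\big)=(b,a)$. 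Combining the sign flip of $\varepsilon$ with this identification, the difference of the two weights equals $\pm$ one generator of $D_2^{\rm lb}(X,\rho)$, hence is $0$ in $C_2^{\rm SLB}(X)$. Iterating over a chain of such moves connecting any two of the four choices completes the argument; degenerate colorings (some of $a,b,c,[a,b,c]$ coinciding) need no separate discussion, since the relations used hold identically in $C_2^{\rm SLB}(X)$ — for instance a crossing with $b=c$ simply contributes $0$.

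I expect the only delicate point to be the orientation bookkeeping in the middle step: from the convention that $\varepsilon=+$ exactly when $(n_o,n_u)$ is a positively oriented frame, one must check that each of the two moves flips $\varepsilon$, and that the signs align so that the relevant relator of $D_2^{\rm lb}(X,\rho)$ appears with a ``$+$'' rather than a ``$-$''. This is a finite verification on the normalized local picture and I anticipate no conceptual difficulty. As a fallback one may instead tabulate all four weights together with the pairwise differences and check directly that each difference lies in $D_2^{\rm lb}(X)+D_2^{\rm lb}(X,\rho)$; the argument above is merely the economical packaging of that table.
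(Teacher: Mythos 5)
Your argument is correct and follows essentially the same route as the paper: both identify the four possible local weights and observe that passing the specified region across the under-arc or the over-arc changes the weight by exactly a degree-$2$ generator of $D_2^{\rm lb}(X,\rho)$ (the $i=1$ and $i=2$ relators), which vanishes in $C_2^{\rm SLB}(X)$. The paper simply writes out the three resulting relations explicitly rather than phrasing them as elementary moves.
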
 
\begin{proof}
Let $\chi$ be a crossing of $(D,C)$ with regions colored by $a$, $b$, $c$ and $[a,b,c]$ as depicted in Figure~\ref{coloring3}. 
Then the weight of $\chi$ can be written in four ways as 
$w = +\big((a, b), (a, c)\big)$, $-\big((b, a), (b, [a,b,c])\big)$, $-\big((c, [a,b,c]), (c, a)\big)$, or  
$+\big(([a,b,c], c), ([a,b,c], b)\big)$,
depending on how the specified regions are choosen. 
 Since it holds that 
\begin{align}
&\big((a,b), (a,c)\big) + \big(\rho((a,b)), (a,c)\ostar(a,b)\big)=0, \notag\\ 
&\big((a,b), (a,c)\big) + \big((a,b)\ustar(a,c), \rho((a,c))\big)=0, \notag\\ 
&\big(([a,b,c],c), ([a,b,c],b)\big) + \big(\rho(([a,b,c],c)), ([a,b,c],b)\ostar([a,b,c],c)\big)=0 \notag
\end{align}
in $C_2^{\rm SLB} (X)$, we have
\begin{align}
&+\big((a, b), (a, c)\big) = -\big((b, a), (b, [a,b,c])\big) \notag\\
&= -\big((c, [a,b,c]), (c, a)\big) = +\big(([a,b,c], c), ([a,b,c], b)\big) \notag
\end{align}
in $C_2^{\rm SLB} (X)$.
 Therefore, the weight of a crossing does not depend on the choice of a specified regions. For the value $W(D,C)$, the same thing holds. 
 
 \begin{figure}[ht]
  \begin{center}
    \includegraphics[clip,width=10cm]{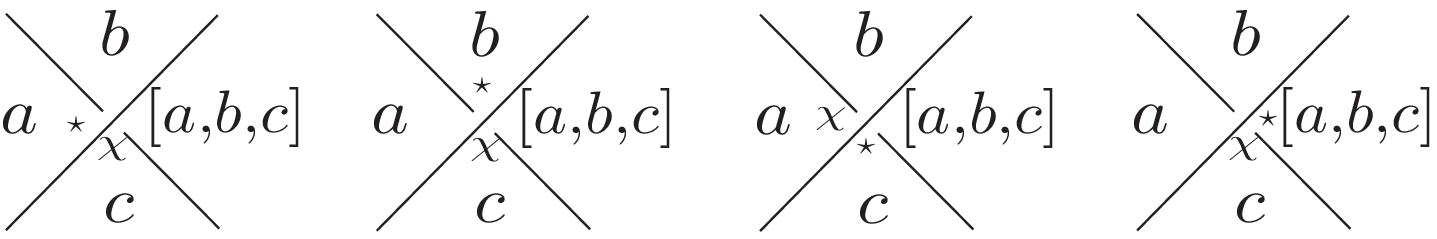}
  \end{center}
          \caption{}
            \label{coloring3}
\end{figure}
\end{proof}
\begin{lemma}\label{lem:weightsum2}
We have $W(D,C) \in \ker \partial_2^{\rm SLB}$.
\end{lemma}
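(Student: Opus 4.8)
The plan is to verify $\partial_2^{\rm SLB}(W(D,C))=0$ by expanding the boundary crossing-by-crossing and then cancelling the contributions semiarc-by-semiarc, in the spirit of the standard proof that a quandle-style weight sum is a cycle. Since $\partial_2^{\rm SLB}$ is induced from $\partial_2^{\rm lb}$, it suffices to compute $\partial_2^{\rm lb}$ on a lift of $W(D,C)$ to $C_2^{\rm lb}(X)$ and then reduce modulo $D_1^{\rm lb}(X)+D_1^{\rm lb}(X,\rho)$.

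First I would record the local data. Plugging $n=2$ into the definition of $\partial^{\rm lb}$ gives
\[
\partial_2^{\rm lb}\Big(\big((a,b),(a,c)\big)\Big)=(a,b)-(a,c)+\big(b,[a,b,c]\big)-\big(c,[a,b,c]\big).
\]
Fix a crossing $\chi$ of $(D,C)$ with surrounding regions $x_1,x_2,x_3,x_4$ colored $a,b,c,[a,b,c]$ as in Figure~\ref{coloring3}, and let $w_\chi=\varepsilon\big((a,b),(a,c)\big)$ be its weight. Exactly four semiarcs of $D$ are incident to $\chi$, and from Figure~\ref{coloring3} one reads off that the four ordered pairs $(a,b),(a,c),(b,[a,b,c]),(c,[a,b,c])$ above are, up to transposing their two entries, precisely the pairs of colors appearing on the two sides of these four semiarcs. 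For a semiarc $u$ incident to $\chi$, let $[u]_\chi\in C_1^{\rm SLB}(X)$ denote the class of the pair (left color, right color) of $u$ computed after orienting $u$ so that it points \emph{away from} $\chi$. The key claim is the local identity
\[
\partial_2^{\rm SLB}(w_\chi)=\sum_{u\ni\chi}[u]_\chi \qquad\text{in } C_1^{\rm SLB}(X),
\]
the sum running over the four semiarcs incident to $\chi$. This is where the symmetric quotient enters: the identity uses the relation $(s,t)+(t,s)=0$ valid in $C_1^{\rm SLB}(X)$ (because $(s,t)+(t,s)\in D_1^{\rm lb}(X,\rho)$), and it uses that the sign $\varepsilon$ is calibrated exactly as in Figure~\ref{coloring5}; one checks it by direct inspection of the crossing drawn in Figure~\ref{coloring3}. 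Since every crossing of $D$ is locally equivalent to this one by an orientation-preserving homeomorphism, under which $w_\chi$, $\partial_2^{\rm SLB}$, the rule for $\varepsilon$, and the ``away from $\chi$'' convention are all equivariant, this single check establishes the identity at every crossing; and by Lemma~\ref{lem:weightsum1} it is independent of the specified region chosen at $\chi$.

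Summing over all crossings then gives
\[
\partial_2^{\rm SLB}(W(D,C))=\sum_{\chi}\partial_2^{\rm SLB}(w_\chi)=\sum_{\chi}\sum_{u\ni\chi}[u]_\chi,
\]
and it remains to regroup the right-hand side by semiarcs. Each semiarc $u$ of $D$ has exactly two endpoints, say crossings $\chi$ and $\chi'$ (not necessarily distinct), so it contributes precisely the two terms $[u]_\chi$ and $[u]_{\chi'}$. These arise from the same unordered pair of side-colors of $u$, recorded with the two opposite orientations of $u$ (pointing away from $\chi$ and away from $\chi'$, respectively), so $[u]_{\chi'}=-[u]_\chi$ in $C_1^{\rm SLB}(X)$ by the relation above. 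Hence every semiarc contributes $0$, so $\partial_2^{\rm SLB}(W(D,C))=0$, i.e., $W(D,C)\in\ker\partial_2^{\rm SLB}$.

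The only genuine work is the sign verification inside the local identity: one must confirm that $\varepsilon$ is exactly what is needed so that all four incident semiarcs enter with the single, consistent ``away from $\chi$'' orientation rather than a mixture, since if the two contributions of some semiarc came with the same orientation they would add to $2[u]_\chi$ instead of cancelling. I expect this to be the main — and essentially the only nontrivial — point; the regrouping over semiarcs is then a formality.
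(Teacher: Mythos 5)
Your proposal is correct and follows essentially the same route as the paper's proof: expand $\partial_2^{\rm lb}$ at each crossing into the four terms $(a,b)-(a,c)+(b,[a,b,c])-(c,[a,b,c])$, attach each term to an incident semiarc with an orientation convention, use the relation $(s,t)+(t,s)=0$ in $C_1^{\rm SLB}(X)$ to normalize orientations, and cancel the two contributions of each semiarc. The paper phrases the orientation bookkeeping via normal vectors on semiarcs rather than your ``pointing away from $\chi$'' convention, but this is the same argument.
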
 
\begin{proof}
For a crossing $\chi$ as depicted in the upper of Figure~\ref{assigned to semi-arcs} , we have 
\begin{align}
&\partial_2^{\rm SLB} ( w_{\chi}  ) \notag\\
&=\partial_2^{\rm SLB} \Big(+ \big( (a, b), (a, c) \big)  \Big) \notag\\
&= -(a, c) + (a, b) + (b, [a, b, c]) - (c, [a, b, c]). \notag
\end{align}
The terms of this result can be assigned to the semi-arcs, around $\chi$, between the regions colored by the two elements of the $2$-tuple of each term as the upper of Figure~\ref{assigned to semi-arcs}. 
At the same time, we assign a normal vector to each semi-arc around $\chi$ so that it points from the region colored by the first entry of the assigned $2$-tuple. Note that we may change the assigned signed $2$-tuple, say $\varepsilon(a,b)$, and the normal vector, say $n_{a\to b}$ to $-\varepsilon(b,a)$ and the reversed orientation $n_{b\to a}$, since $\varepsilon(a,b)=-\varepsilon(b,a)$ in $C_1^{\rm SLB} (X)$. For a semi-arc, if the normal vector of the two ends of the semi-arc is different, we transform either one to the reversed orientation, and at the same time, the assigned signed $2$-tuple, say $\varepsilon(a,b)$, to $-\varepsilon(a,b)$.  
When we perform this operation for all semi-arcs, the two ends of each semi-arc have the same element of $X^2$, and their signs are different (see Figure~\ref{assigned to semi-arcs}). 
Thus, two terms of the result of $\partial_2^{\rm SLB} ( W(D,C)  )$ which are assigned to the same semi-arc are canceled. This implies that $\partial_2^{\rm SLB} ( W(D,C)  ) = 0$. 
\begin{figure}[h]
\begin{center}
\includegraphics[width=10cm]{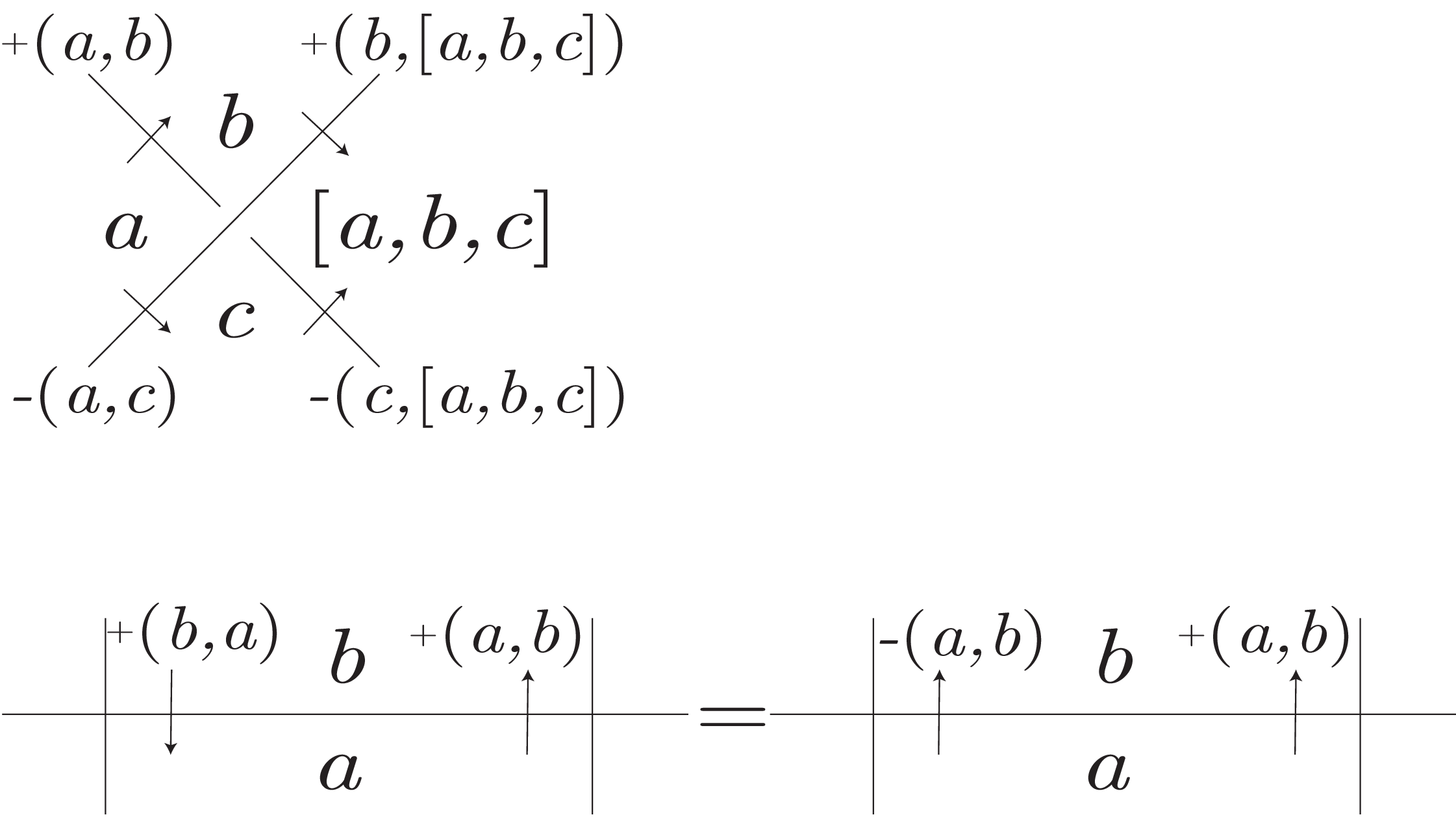}
\caption{} 
\label{assigned to semi-arcs}
\end{center}
\end{figure}
\end{proof}

For an Abelian group $A$, let $\theta: C_2^{\rm SLB}(X) \to A$ be a $2$-cocycle. 
We define 
\begin{align}\label{eq:2}
\Phi_{\theta}(D) :=\{ \theta(W(D,C)) ~|~ C\in {\rm Col}_X(D)\},
\end{align}
and 
\begin{align}\label{eq:3}
\Phi_{\theta}^{\rm NT}(D) :=\{ \theta(W(D,C)) ~|~ C\in {\rm Col}_X^{\rm NT}(D)\}
\end{align}
as multisets. 
We call them the {\it symmetric local biquandle cocycle invariants} of (unoriented) knots with respect to $(X, [\,])$ and $\theta$. 

\begin{theorem}
The multisets $\Phi_{\theta}(D)$ and $\Phi_{\theta}^{\rm NT}(D)$ are invariants of $K$. 
\end{theorem}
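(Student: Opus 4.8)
The plan is to show invariance under the three Reidemeister moves, since $\Phi_\theta(D)$ and $\Phi_\theta^{\rm NT}(D)$ are defined as multisets indexed by colorings, it suffices to show that for each Reidemeister move relating diagrams $D$ and $D'$ there is a bijection $\phi\colon {\rm Col}_X(D)\to {\rm Col}_X(D')$ such that $\theta(W(D,C))=\theta(W(D',\phi(C)))$ for every $C$, and that $\phi$ restricts to a bijection between nontrivial colorings. The bijection on colorings is standard: outside a small disk the two diagrams agree, and one checks that each coloring of the part inside the disk for $D$ extends uniquely to the part for $D'$ (using that $[a,b,c]=a-b+c$ gives a unique solution for the region colors, equivalently that $\{\ustar_a\},\{\ostar_a\}$ define a biquandle structure); nontriviality is preserved because the colorings agree away from the disk and the local picture of a trivial coloring is preserved. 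So the crux is the weight computation.

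First I would fix, for each move, a convenient choice of specified regions for the crossings involved — legitimate by Lemma~\ref{lem:weightsum1} — and write $W(D',\phi(C))-W(D,C)$ as a sum of the weights of only the crossings inside the disk (the crossings outside contribute identically). For Reidemeister I, the disk contains one extra crossing of $D'$ whose two relevant colors coincide, so its weight is $\pm\big((a,b),(a,b)\big)$ for suitable $a,b$; this lies in $D_2^{\rm lb}(X)$ and hence is $0$ in $C_2^{\rm SLB}(X)$, so $W$ is unchanged and a fortiori $\theta(W)$ is. For Reidemeister II, $D'$ has two extra crossings; with a coherent choice of specified regions their weights are $+\big((a,b),(a,c)\big)$ and $-\big((a,b),(a,c)\big)$ (or are killed by the $\rho$-relations $\big((a,b),(a,c)\big)+\big((a,b)\ustar(a,c),\rho((a,c))\big)=0$ exactly as in the proof of Lemma~\ref{lem:weightsum1}), so again the sum vanishes in $C_2^{\rm SLB}(X)$. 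For Reidemeister III, the three crossings on each side are colored by the same data after the move, and one checks that
\[
\sum_{\chi\in D'}w_\chi-\sum_{\chi\in D}w_\chi=\partial_3^{\rm SLB}(\xi)
\]
for an explicit generator $\xi=\big((a,b),(a,c),(a,d)\big)\in C_3^{\rm SLB}(X)$ (this is precisely the identity built into the definition of $\partial_3^{\rm lb}$, and the $\ustar,\ostar$ substitutions appearing in $\partial_3^{\rm lb}$ are exactly the color changes across the three strands). Applying the $2$-cocycle $\theta$ then gives $\theta(\partial_3^{\rm SLB}\xi)=(\delta^2_{\rm SLB}\theta)(\xi)=0$.

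The main obstacle is the Reidemeister III computation: one must carefully bookkeep the region colors on both sides of the move, choose specified regions so that the six weights are expressed over a common "base point," and verify that the difference of the two triples of weights is literally $\partial_3^{\rm SLB}$ applied to the natural $3$-chain — including getting all six signs $\varepsilon=\pm$ right from the right-hand orientation convention of Figure~\ref{coloring5}. This is the standard local-biquandle-homology bookkeeping (as in \cite{NelsonOshiroOyamaguchi}, adapted to the symmetric/unoriented setting via Lemma~\ref{lem:weightsum1}), but it is where essentially all the work lies. Once that identity is in hand, invariance of $\Phi_\theta(D)$ follows since $\theta\circ\partial_3^{\rm SLB}=0$ by the cocycle condition, and invariance of $\Phi_\theta^{\rm NT}(D)$ follows because the color-bijection $\phi$ respects nontriviality. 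I would also note, for completeness, that the value $W(D,C)$ is well-defined in $C_2^{\rm SLB}(X)$ by Lemma~\ref{lem:weightsum1} and lies in $\ker\partial_2^{\rm SLB}$ by Lemma~\ref{lem:weightsum2}, so $\theta(W(D,C))$ only depends on the cohomology class of $\theta$ — though this last remark is not needed for the bare invariance statement.
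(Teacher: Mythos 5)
Your proposal is correct and follows exactly the route the paper intends: the paper's own ``proof'' is a one-line deferral to the standard Reidemeister-move argument for local biquandle cocycle invariants in \cite{NelsonOshiroOyamaguchi}, which is precisely what you spell out (RI killed by the degeneracy submodule, RII by cancellation/the $\rho$-relations, RIII by $\theta\circ\partial_3^{\rm SLB}=0$). You have simply written out the details that the paper omits.
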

\begin{proof}
This can be proved similarly as in the case of the ordinary local biquandle cocycle invariants (see \cite{NelsonOshiroOyamaguchi}). 
\end{proof}

\begin{proposition}
Let $\theta, \theta': C_2^{\rm SLB}(X) \to A$ be $2$-cocycles.
If $\theta$ and $\theta'$ are cohomologous, $\Phi_{\theta}(K)=\Phi_{\theta'}(K)$ and $\Phi_{\theta}^{\rm NT}(K)=\Phi_{\theta'}^{\rm NT}(K)$ hold for any knot $K$.
\end{proposition}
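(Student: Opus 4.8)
The plan is to reduce everything to the cycle property of the weight sum established in Lemma~\ref{lem:weightsum2}. Recall that $\theta$ and $\theta'$ being cohomologous means $\theta'-\theta=\delta^1_{\rm SLB}(\kappa)$ for some $1$-cochain $\kappa\in C^1_{\rm SLB}(X,A)={\rm Hom}(C_1^{\rm SLB}(X),A)$, and that by definition $\delta^1_{\rm SLB}(\kappa)=\kappa\circ\partial_2^{\rm SLB}$.

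First I would fix a diagram $D$ of $K$ and take an arbitrary coloring $C\in{\rm Col}_X(D)$. Then I would compute, directly from the definitions,
\[
\theta'\big(W(D,C)\big)=\theta\big(W(D,C)\big)+\delta^1_{\rm SLB}(\kappa)\big(W(D,C)\big)=\theta\big(W(D,C)\big)+\kappa\Big(\partial_2^{\rm SLB}\big(W(D,C)\big)\Big).
\]
By Lemma~\ref{lem:weightsum2}, $W(D,C)\in\ker\partial_2^{\rm SLB}$, so $\partial_2^{\rm SLB}(W(D,C))=0$ and the last term vanishes, giving $\theta'(W(D,C))=\theta(W(D,C))$ for every single $C\in{\rm Col}_X(D)$.

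Next I would conclude at the level of multisets. Since the values $\theta(W(D,C))$ and $\theta'(W(D,C))$ agree for each individual $C$, the two multisets $\Phi_\theta(D)=\{\theta(W(D,C))\mid C\in{\rm Col}_X(D)\}$ and $\Phi_{\theta'}(D)=\{\theta'(W(D,C))\mid C\in{\rm Col}_X(D)\}$ coincide with multiplicities, that is, $\Phi_\theta(D)=\Phi_{\theta'}(D)$; as these are invariants of $K$ this is $\Phi_\theta(K)=\Phi_{\theta'}(K)$. Applying the same pointwise identity with $C$ restricted to the subset ${\rm Col}_X^{\rm NT}(D)\subseteq{\rm Col}_X(D)$ yields $\Phi_\theta^{\rm NT}(K)=\Phi_{\theta'}^{\rm NT}(K)$ in exactly the same manner.

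I do not expect a real obstacle: the only substantive ingredient is Lemma~\ref{lem:weightsum2}, which has already been proved, and the remainder is the familiar fact that a coboundary annihilates a cycle. The one place to be slightly careful is that the argument must be run at the level of the individual colorings --- the pointwise equality $\theta'(W(D,C))=\theta(W(D,C))$ --- so that the conclusion is valid for the multisets counted with multiplicity, not merely for their underlying sets.
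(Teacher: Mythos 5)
Your proposal is correct and follows essentially the same route as the paper: express the difference $\theta'-\theta$ as $\kappa\circ\partial_2^{\rm SLB}$ for a $1$-cochain $\kappa$, evaluate on $W(D,C)$, and invoke Lemma~\ref{lem:weightsum2} to kill the coboundary term, then pass to the multisets. The only (minor) addition you make is the explicit remark that the pointwise equality for each coloring $C$ gives equality of multisets with multiplicity, which the paper leaves implicit.
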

\begin{proof}
Let $D$ be a diagram of $K$, and $C$ any nontrivial Dehn $p$-coloring of $D$. 
Since by the assumption, there exists a map $\varphi \in C^1_{\rm SLB}(X)$ such that $\theta -\theta' = \delta^1_{\rm SLB} (\varphi)=\varphi \circ \partial_2^{\rm SLB}$, we have
\begin{align*}
\theta(W(D,C)) - \theta'(W(D,C)) = \varphi \circ \partial_2^{\rm SLB} (W(D,C))=0,
\end{align*}
where the last equality follows from Lemma~\ref{lem:weightsum2}.

Thus we have $\theta(W(D,C)) = \theta'(W(D,C))$, which induces $\Phi_{\theta}^{\rm NT}(K)=\Phi_{\theta'}^{\rm NT}(K)$.
\end{proof}

\begin{proposition}\label{thm:cocycleinvariantforaffineequivalence}
Let $\theta_p$ be the $2$-cocycle defined in Lemma~\ref{lem:cocycle}.
Let $D$ be a diagram of a Dehn $p$-colorable knot. 
Let $C$ and $C'$ be Dehn $p$-colorings of $D$.
Suppose that $C'=sC+t$ for some $s\in \mathbb Z_p^{\times}$ and $t\in \mathbb Z_p$, that is, $C\sim C'$.  
Then we have 
\[
\theta_p(W(D,C'))=  s^2 \theta_p(W(D,C)).
\]
\end{proposition}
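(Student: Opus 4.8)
The plan is to compute the effect of the affine transformation $C \mapsto sC + t$ crossing-by-crossing on the weights, and use the explicit polynomial formula for $\theta_p$. Fix a diagram $D$ and a crossing $\chi$ of $(D,C)$ with specified region colored $a$ and adjacent regions colored $b$ (under) and $c$ (over), so the weight is $w_\chi = \varepsilon\big((a,b),(a,c)\big)$ for a sign $\varepsilon$ determined only by the geometry of $D$ at $\chi$, not by the coloring. Under the coloring $C' = sC+t$ the same crossing acquires colors $sa+t$, $sb+t$, $sc+t$ in the same positions, and the sign $\varepsilon$ is unchanged because it depends only on the diagram. Hence $W(D,C')$ is obtained from $W(D,C)$ by replacing each generator $\varepsilon\big((a,b),(a,c)\big)$ by $\varepsilon\big((sa+t,sb+t),(sc+t,\,\cdot\,)\big)$ — more precisely, $\theta_p(W(D,C')) = \sum_\chi \varepsilon_\chi\, \theta_p\big((sa_\chi+t, sb_\chi+t),(sa_\chi+t, sc_\chi+t)\big)$. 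So it suffices to prove the pointwise identity
\[
\theta_p\Big(\big((sa+t, sb+t),(sa+t, sc+t)\big)\Big) = s^2\,\theta_p\Big(\big((a,b),(a,c)\big)\Big)
\]
for all $a,b,c \in \mathbb Z_p$ and all $s \in \mathbb Z_p^\times$, $t \in \mathbb Z_p$; summing over all crossings then gives the claim.

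**Next I would** verify this pointwise identity in two steps, corresponding to scaling and translation separately. For the scaling part ($t=0$): by the defining formula, $\theta_p\big((sa,sb),(sa,sc)\big) = (sa-sb)\cdot\frac{(sa-sb+2sc)^p + (sa+sb)^p - 2(sa+sc)^p}{p}$. Pull out $s$ from the first factor and $s^p$ from each $p$-th power in the numerator, getting $s \cdot s^p \cdot \frac{(a-b+2c)^p + (a+b)^p - 2(a+c)^p}{p} = s^{p+1}\,\theta_p\big((a,b),(a,c)\big)$. Since $s \in \mathbb Z_p^\times$, Fermat's little theorem gives $s^{p-1} \equiv 1 \pmod p$, hence $s^{p+1} \equiv s^2 \pmod p$; note the integer division by $p$ is legitimate at each stage because the relevant numerators are divisible by $p$ (the unscaled one by Lemma~\ref{lem:cocycle}, and multiplying by the integer $s^p$ preserves divisibility). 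For the translation part, I would show $\theta_p\big((a+t,b+t),(a+t,c+t)\big) = \theta_p\big((a,b),(a,c)\big)$: the first factor $(a+t)-(b+t) = a-b$ is unchanged, and inside the numerator $(a+t)-(b+t)+2(c+t) = (a-b+2c)+2t$, $(a+t)+(b+t) = (a+b)+2t$, $(a+t)+(c+t) = (a+c)+2t$, so each $p$-th power has its argument shifted by the same $2t$; expanding $(x+2t)^p = x^p + (\text{terms divisible by }p) + (2t)^p$ and using that the $p$-th-power terms combine with coefficient $1 + 1 - 2 = 0$, the numerator is unchanged modulo $p^2$, hence the fraction is unchanged mod $p$. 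Composing the two steps (translate first, then scale, or track them together) yields the desired $s^2$ factor.

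**The main obstacle** is bookkeeping the integrality/divisibility carefully: $\theta_p$ is defined via an integer computation followed by division by $p$, and one must make sure that at each substitution the numerator is still a genuine integer multiple of $p$ before dividing, and that the congruences are taken in the right modulus. The cleanest way around this is to work with the polynomial identity in $\mathbb Z[a,b,c,s,t]$ up to the ideal generated by $p$-th-power relations — i.e. observe that the map $(a,b,c)\mapsto \frac{1}{p}\big((a-b+2c)^p+(a+b)^p-2(a+c)^p\big)$ lands in $\mathbb Z$ and behaves polynomially, so that the substitutions $a\mapsto sa+t$ etc.\ and the extraction of $s^p$ are all valid identities of integers, after which one only reduces mod $p$ once at the end. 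A secondary, minor point is to state explicitly that the sign $\varepsilon_\chi$ attached to each crossing is an invariant of the underlying unoriented diagram (independent of the coloring), which is immediate from its definition in terms of the normal vectors $n_u, n_o$ and the orientation of $\mathbb R^2$; this is what licenses the crossing-by-crossing comparison of $W(D,C)$ and $W(D,C')$.
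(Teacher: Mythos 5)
The reduction to a pointwise identity is where this breaks down: the translation half of your claimed identity,
\[
\theta_p\Big(\big((a+t,b+t),(a+t,c+t)\big)\Big) \;=\; \theta_p\Big(\big((a,b),(a,c)\big)\Big),
\]
is false. Your own expansion shows why. Writing $u=a-b+2c$, $v=a+b$, $w=a+c$, the shifted numerator is $\sum_{j=0}^{p}\binom{p}{j}(2t)^j\big(u^{p-j}+v^{p-j}-2w^{p-j}\big)$; the $j=0$ and $j=p$ terms behave as you say, but each intermediate term is divisible by $p$ only once (via $\binom{p}{j}$), and the bracket $u^{k}+v^{k}-2w^{k}$ with $v=2w-u$ is not $\equiv 0\pmod p$ for $k\ge 2$ (for $k=2$ it equals $2(u-w)^2=2(c-b)^2$). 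So the numerator changes by a multiple of $p$ that need not be a multiple of $p^2$, and the fraction changes mod $p$. Concretely, for $p=3$: $\theta_3\big((0,1),(0,2)\big)=(-1)\cdot\frac{27+1-16}{3}=-4\equiv 2$, while after translating by $t=1$, $\theta_3\big((1,2),(1,0)\big)=(-1)\cdot\frac{-1+27-2}{3}=-8\equiv 1$. (The scaling half of your argument, $s^{p+1}\equiv s^2$ via Fermat, is correct pointwise, and so is the observation that the sign $\varepsilon_\chi$ and the well-definedness under change of integer representatives are unproblematic.)

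The proposition is nevertheless true because it concerns $W(D,C)$, which by Lemma~\ref{lem:weightsum2} is a cycle arising from an actual knot diagram, not an arbitrary generator: the translation defect exhibited above vanishes on such weight sums but not termwise, so no crossing-by-crossing argument can succeed for the $t$-part. The paper closes exactly this gap by a global argument: it identifies $\theta_p(W(D,C))$ with $2^2\theta_p^{\rm M}(W(D,\overline{C}))$ for Mochizuki's $3$-cocycle $\theta_p^{\rm M}$ of the dihedral quandle $R_p$ and the corresponding shadow coloring $\overline{C}$, and then invokes Satoh's theorem that the shadow cocycle invariant of a knot transforms by $s^2$ under regular affine changes of the shadow coloring. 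To repair your approach you would need an additional nontrivial step, e.g.\ showing that the difference cochain $\big((a,b),(a,c)\big)\mapsto \theta_p\big((a+t,b+t),(a+t,c+t)\big)-\theta_p\big((a,b),(a,c)\big)$ is a coboundary in $C^2_{\rm SLB}(X,\mathbb Z_p)$, so that it annihilates the cycle $W(D,C)$.
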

\begin{proof}

We remark that $\theta_p$ can be also regarded as an ordinary local biquandle $2$-cocycle $\theta_p: C_2^{\rm LB} (X) \to \mathbb Z_p$, because it satisfies the local biquandle $2$-cocycle condition (see Lemma~\ref{lem:cocycle} and \cite{Oshiro20}).
Moreover, since the value $\theta_p \Big(\big((a,b), (a,c)\big)\Big)$ for $\big((a,b), (a,c)\big)\in C_2^{\rm LB}(X)$ coincides with that for $\big((a,b), (a,c)\big)\in C_2^{\rm SLB}(X)$, we may discuss $\theta_p(W(D, sC+t))=s^2\theta_p(W(D, C))$ for $\theta_p \in C^2_{\rm LB}(X, \mathbb Z_p)$ and an oriented Dehn $p$-colored diagram $(D,C)$, which is the case that the specified region, for the weight of a crossing, is fixed depending on the orientation of $D$ as in the lower left of  Figure~\ref{coloring8} (See \cite{Oshiro20}). 

In \cite{Oshiro20} (see also \cite{NelsonOshiroOyamaguchi}), it is shown that the $2$-cocycle $\theta_p \in C^2_{\rm LB}(X, \mathbb Z_p)$ can be obtained from the Mochizuki's $3$-cocycle \cite{Mochizuki}, denoted by $\theta_p^{\rm M}$, of the dihedral quandle $R_p$ of order $p$ as
\begin{align*}
\theta_p\Big(\big((a,b), (a,c)\big)\Big) =2^2 \theta_p^{\rm M}\Big(\big(a, \frac{a+b}{2}, \frac{a+c}{2}\big)\Big),
\end{align*}
and, it is also shown that $\theta_p (W(D,C))=2^2\theta_p^{\rm M} (W(D,\oline{C}))$, where $\oline{C}$ is the shadow $R_p$-coloring corresponding to $C$ (see Figure~\ref{coloring8}).
Moreover, $\theta_p^{\rm M} (W(D,s\oline{C}+t)) = s^2 \theta_p^{\rm M} (W(D,\oline{C}))$ holds for affine equivalent shadow $R_p$-colorings $\oline{C}$ and $s\oline{C}+t$ of $D$ (see \cite{Satoh07}).
Therefore we have 
\begin{align*}
\theta_p (W(D,sC+t)) &= 2^2 \theta_p^{\rm M} (W(D,\oline{sC+t} )) \\
&= 2^2 \theta_p^{\rm M} (W(D,s\oline{C}+t ))\\
& =2^2  s^2 \theta_p^{\rm M} (W(D,\oline{C}))\\
& =  s^2 \theta_p(W(D,C)). 
\end{align*}
\begin{figure}[t]
  \begin{center}
    \includegraphics[clip,width=8cm]{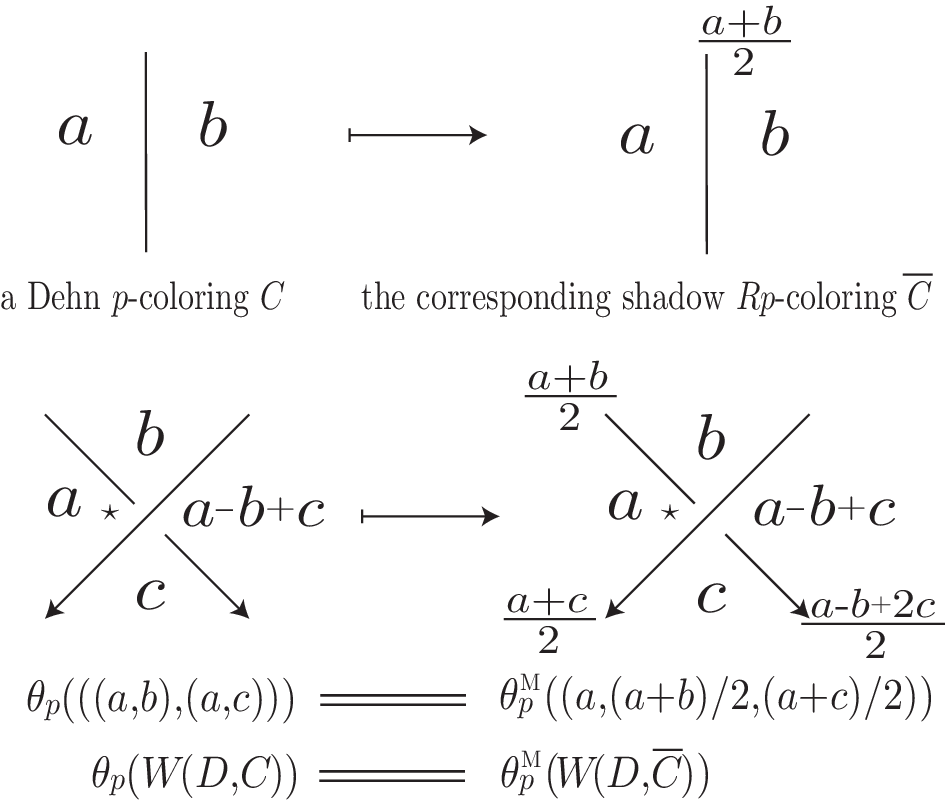}
    \caption{}
    \label{coloring8}
  \end{center}
\end{figure}
\end{proof}

\begin{remark}\label{Rem:localbiquandle}
In general, the (co)homology groups defined in Section~\ref{Sec:homology} and the cocycle invariants of (unoriented) knots defined in this section can be also defined for a knot-theoretic horizontal-ternary-quasigroup $(X,[\,])$, see \cite{NelsonOshiroOyamaguchi}, with a tribracket $[\,]$ satisfying the condition (\ref{eq1}).
Here $(X, [\,])$ gives a region coloring of an (unoriented) knot diagram $D$, that is, a map $C:\R(D) \to X$ satisfying the crossing condition
\[
[C(x_1), C(x_2), C(x_3)] = C(x_4)
\] 
as depicted in Figure~\ref{coloring7}, where we first choose a specified region around a crossing $\chi$, and denote the regions around $\chi$ by $x_1$, $x_2$, $x_3$, and $x_4$ as above.   
Thanks to the condition (\ref{eq1}), the crossing condition is independent of the choice of the specified region (see Figure~\ref{coloring6}). 
\end{remark}

 \begin{figure}[ht]
  \begin{center}
    \includegraphics[clip,width=6cm]{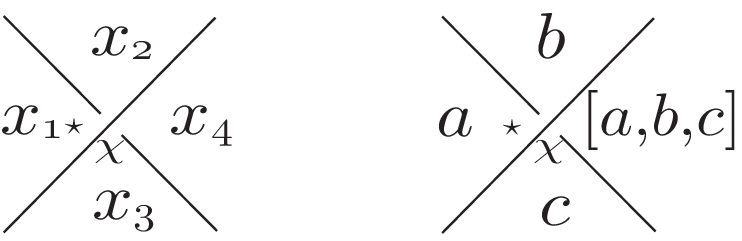}
  \end{center}
          \caption{}
            \label{coloring7}
\end{figure}
\begin{figure}[ht]
  \begin{center}
    \includegraphics[clip,width=8cm]{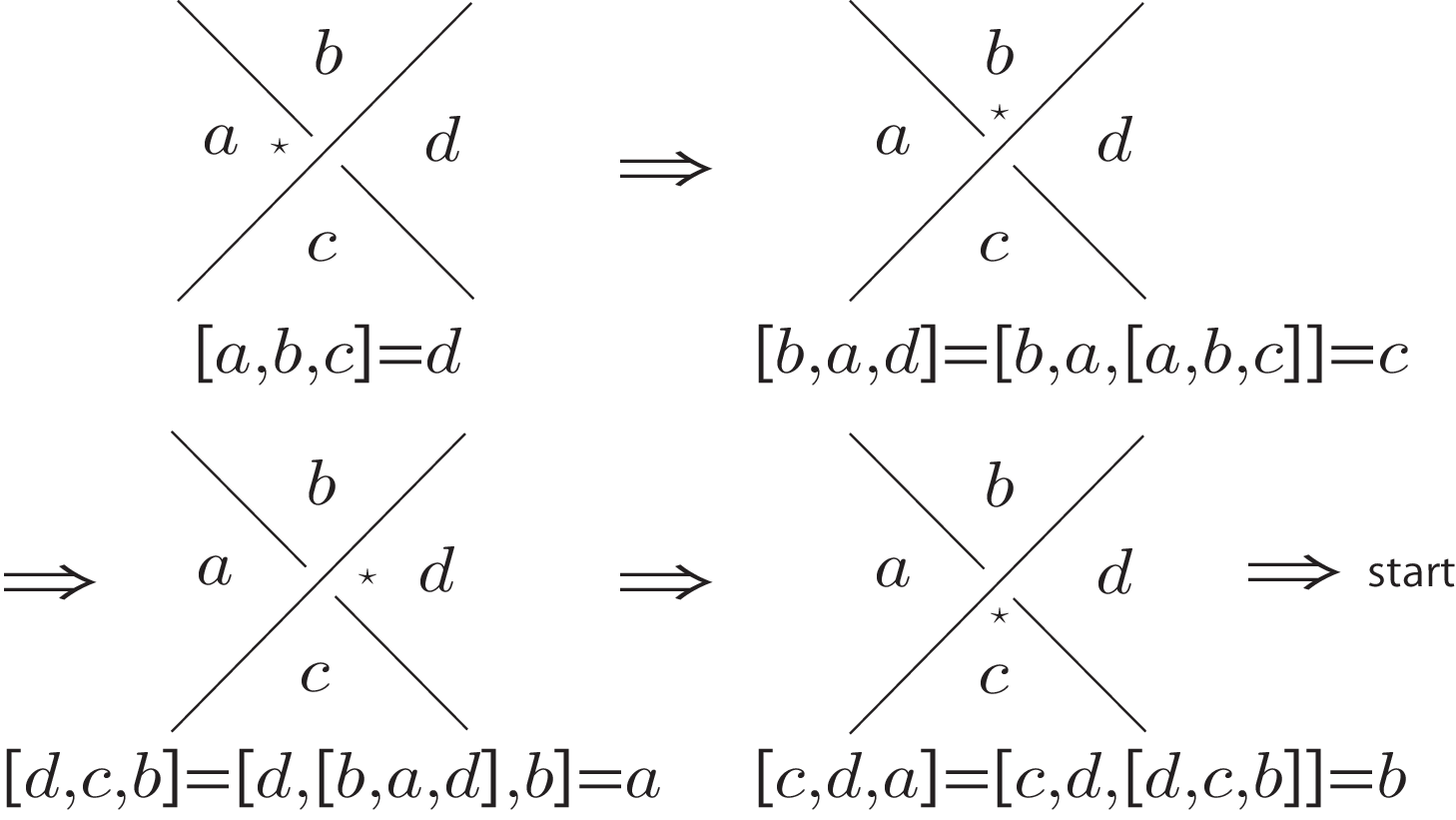}
  \end{center}
      \caption{}
    \label{coloring6}
\end{figure}

\section{New evaluation results for  $\mincol_p(K)$}
In this section, we will give new evaluation results with respect to $\mincol_p(K)$ for a knot $K$ and an odd prime number $p$ with $5<p< 2^5$.
Here, we recall again Question~\ref{question1}.
\begin{itemize}
\item[(1)] For each $p\in \{13, 29\}$, does there exist a Dehn $p$-colorable knot $K$ with $$\mincol_p(K) =\lfloor \log_2 p \rfloor +2~?$$
\item[(2)] For each odd prime number $p$ with $5<p<2^5$, does there exist a Dehn $p$-colorable knot $K$ with 
 $$\mincol_p(K) > \lfloor  \log_2 p \rfloor +2~?$$ 
\item[(3)] For each prime number $p$ with $p>5$, do there exist two Dehn $p$-colorable knots $K_1$ and $K_2$ with $$\mincol_p(K_1)\not = \mincol_p(K_2)?$$
\end{itemize}

The next theorem gives that the answer of (1) of Question~\ref{question1} is ``No''.
\begin{theorem}\label{thm:main3}
When $p\in \{13, 29\}$, for any Dehn $p$-colorable knot $K$,  
\[
\mincol_p(K) \geq \lfloor \log_2 p \rfloor +3
\]
holds.
\end{theorem}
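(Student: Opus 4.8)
The plan is to combine Theorem~\ref{th:MOY1-1}, which already gives $\mincol_p(K) \geq \lfloor \log_2 p \rfloor +2$, with an obstruction argument ruling out the equality case. For $p = 13$ we have $\lfloor \log_2 13 \rfloor + 2 = 5$, and for $p = 29$ we have $\lfloor \log_2 29 \rfloor + 2 = 7$; so we must show that no Dehn $13$-colored diagram of a knot uses exactly $5$ colors, and no Dehn $29$-colored diagram uses exactly $7$ colors. By Theorem~\ref{thm:colorcandidate}(v) and (ix), if such a diagram existed, the color set $\C(D,C)$ would be affine equivalent to $\{0,1,2,4,7\}$ (for $p=13$) or to $\{0,1,2,4,8,15\}$ (for $p=29$), and so, by Lemma~\ref{lemma:colorequiv} and Remark~\ref{rem:colorequiv}, it would suffice to derive a contradiction assuming $\C(D,C)$ is exactly one of these two sets.

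The obstruction will come from the symmetric local biquandle cocycle invariant $\Phi^{\rm NT}_{\theta_p}$ built from the $2$-cocycle $\theta_p$ of Lemma~\ref{lem:cocycle}. The key point is that $W(D,C) \in \ker \partial_2^{\rm SLB}$ (Lemma~\ref{lem:weightsum2}) is built entirely from crossing weights $\varepsilon\big((a,b),(a,c)\big)$ whose region colors $a,b,c$ all lie in the small set $S = \C(D,C)$; moreover at each nontrivially colored crossing the triple $(a,b,c)$ satisfies $a \neq b$ and $b \neq c$ (a crossing with $a=b$ or with the analogous degeneracy is trivially colored), and the fourth region color $[a,b,c] = a-b+c$ must also lie in $S$. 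Thus the admissible crossing types are severely constrained: I would enumerate all triples $(a,b,c) \in S^3$ with $a \neq b$, $b \neq c$, and $a-b+c \in S$, compute $\theta_p$ on each (reducing via the isomorphism of Lemma~\ref{lem:chaingroup} to canonical representatives), and then show that \emph{every} $\mathbb Z$-linear combination of these generators that lies in $\ker\partial_2^{\rm SLB}$ — equivalently, every value $W(D,C)$ that can arise from an actual knot diagram colored by $S$ — is sent by $\theta_p$ to an element of $\mathbb Z_p$ that, after the $s^2$-rescaling freedom of Proposition~\ref{thm:cocycleinvariantforaffineequivalence}, forces the corresponding coloring to be trivial. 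Concretely, I expect that for $S$ as above the only elements of $\ker\partial_2^{\rm SLB}$ spanned by $S$-admissible crossing weights are the "trivial" ones (each semiarc contributing a cancelling pair), so $W(D,C)$ would have to vanish in a way incompatible with $C$ being nontrivial; alternatively, the set of achievable values $\{\theta_p(W(D,C))\}$ over colorings with $\C(D,C)=S$ fails to meet the known nonzero values of $\Phi^{\rm NT}_{\theta_p}$ for $p$-colorable knots, forcing a contradiction.

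I would organize the write-up as: (i) reduce, via Theorem~\ref{thm:colorcandidate} and Lemma~\ref{lemma:colorequiv}, to the single candidate set $S$ for each $p \in \{13,29\}$; (ii) list the finitely many $S$-admissible nontrivial crossing types and the value of $\theta_p$ on each, using Lemma~\ref{lem:chaingroup} to put each weight in canonical form; (iii) analyze the $\partial_2^{\rm SLB}$-cycle condition on sums of these generators, together with the semiarc-pairing structure from the proof of Lemma~\ref{lem:weightsum2}, to pin down the possible values of $\theta_p(W(D,C))$; and (iv) compare with the known cocycle-invariant values (or directly show nontriviality of $C$ is impossible), invoking Proposition~\ref{thm:cocycleinvariantforaffineequivalence} to absorb the affine-equivalence ambiguity. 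The main obstacle I anticipate is step (iii): the cycle condition is a genuinely global constraint coming from how semiarcs fit together around a knot diagram, not just a local crossing condition, so I will need the semiarc-normal-vector bookkeeping of Lemma~\ref{lem:weightsum2} to extract exactly which integer combinations of $S$-admissible weights are realizable, and then verify — likely by a finite case check for each of the two sets $S$ — that $\theta_p$ annihilates all of them, or sends them only to values forbidden for nontrivial colorings. The computations are finite and small (at most a handful of crossing types for each $S$), so once the framework is set up the verification should be routine.
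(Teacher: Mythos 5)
Your reduction to the single candidate color set ($\{0,1,2,4,7\}$ for $p=13$, $\{0,1,2,4,8,15\}$ for $p=29$) via Theorem~\ref{thm:colorcandidate} and Lemma~\ref{lemma:colorequiv}, and your use of the cycle condition $W(D,C)\in\ker\partial_2^{\rm SLB}$, both match the paper's first steps. But the final step of your plan --- extracting a contradiction from the cocycle invariant $\Phi^{\rm NT}_{\theta_p}$ --- cannot work here, for two reasons. First, Theorem~\ref{thm:main3} carries no hypothesis on $\Phi^{\rm NT}_{\theta_p}(K)$: it must hold for every Dehn $p$-colorable knot, including those with $0\in\Phi^{\rm NT}_{\theta_p}(K)$, so the invariant is simply not available as an obstruction (the conditional statement is the content of the separate Theorem~\ref{thm:main2}). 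Second, and more decisively, for these two color sets the cycle condition forces the coefficient of every crossing type $\big((a,b),(a,c)\big)$ with $a\neq b$ to vanish, leaving $W(D,C)$ supported only on generators of the form $\big((a,a),(a,b)\big)$; and $\theta_p$ annihilates every such generator because of the prefactor $(a-b)$ in its formula (here the second entry of the first pair equals the first). Hence $\theta_p(W(D,C))=0$ automatically, so the cocycle yields no contradiction; a nontrivial coloring can perfectly well have its weight sum supported on the $2$-torsion generators $\big((a,a),(a,b)\big)$, and ``$W(D,C)$ trivial in $C_2^{\rm SLB}$'' is not incompatible with $C$ being nontrivial.

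The missing ingredient is a connectivity argument exploiting that $K$ has a single component. After the cycle condition shows that each type $\big((a,b),(a,c)\big)$ with $a\neq b$ occurs with equal numbers of positive and negative crossings, the paper analyzes how the $\{a_1,a_2\}$-semiarcs can be joined at the admissible crossings: for $p=13$, if a crossing of type $\big((1,4),(1,7)\big)$ is present, the $\{1,4\}$- and $\{4,7\}$-semiarcs close up into a component of the underlying curve disjoint from the rest, contradicting that $K$ is a knot; one then iterates over the remaining types in a fixed order, including the final case in which none of these types occur, and each case yields the same kind of contradiction. This combinatorial/topological step, not the cocycle evaluation, is what rules out $5$-color (resp.\ $6$-color) diagrams for $p=13$ (resp.\ $p=29$), and your proposal does not supply a substitute for it.
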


For an odd prime number $p$, let $\theta_p: C_2^{\rm SLB} (\mathbb Z_p) \to \mathbb Z_p$  be the $2$-cocycle of $(\mathbb Z_p, [\,])$ with $[a,b,c]=a-b+c$ defined in Lemma~\ref{lem:cocycle}, that is, defined by 
\begin{align*}
\theta_p \Big(\big((a,b), (a,c)\big)\Big) &=(a-b) \frac{  (a-b+2c)^p +( a+b)^p- 2( a+c)^p   }{p}.
\end{align*}

We can evaluate $\mincol_p(K)$ by using  symmetric local biquandle cocycle invariants as follows.
\begin{theorem}\label{thm:main2}
Let $p$ be an odd prime number with $5<p<2^5$ and $p\not =17$.
Let $K$ be a Dehn $p$-colorable knot. 

If the symmetric local biquandle cocycle invariant $\Phi_{\theta_p}^{\rm NT}(K)$ satisfies $0 \not\in \Phi_{\theta_p}^{\rm NT}(K)$, then we have 
\[
\mincol_p(K) \geq \lfloor \log_2 p \rfloor +3.
\]

\end{theorem}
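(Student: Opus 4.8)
\emph{Proof plan.} The plan is to argue by contradiction, reducing via Theorem~\ref{thm:colorcandidate} to finitely many explicit color sets and then using Lemmas~\ref{lem:weightsum2} and \ref{lem:cocycle} together with Proposition~\ref{thm:cocycleinvariantforaffineequivalence} to force $\theta_p$ to vanish on the corresponding weight cycles. Since Theorem~\ref{th:MOY1-1} already gives $\mincol_p(K)\ge\lfloor\log_2 p\rfloor+2$, it is enough to rule out the equality $\mincol_p(K)=\lfloor\log_2 p\rfloor+2$, i.e. to show that $K$ has no nontrivially Dehn $p$-colored diagram with exactly $\lfloor\log_2 p\rfloor+2$ colors. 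So suppose $(D,C)$ is such a diagram. By Theorem~\ref{thm:colorcandidate}, $\C(D,C)\sim S$ for one of the finitely many subsets $S\subset\mathbb Z_p$ listed there for this $p$, and since $p\ne 17$ only a handful of such $S$ occur. Choosing $s\in\mathbb Z_p^\times$ and $t\in\mathbb Z_p$ with $S=s\,\C(D,C)+t$ and setting $C':=sC+t$, one obtains $C'\in{\rm Col}_p(D)$ with $\C(D,C')=S$; moreover $C'$ is again nontrivial, since $a\mapsto sa+t$ is injective, so any nontrivially colored crossing of $(D,C)$ stays nontrivially colored in $(D,C')$. By Proposition~\ref{thm:cocycleinvariantforaffineequivalence}, $\theta_p(W(D,C'))=s^2\,\theta_p(W(D,C))$ with $s^2\in\mathbb Z_p^\times$, hence $\theta_p(W(D,C))=0$ exactly when $\theta_p(W(D,C'))=0$. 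Therefore it suffices to prove that $\theta_p(W(D,C'))=0$ whenever $\C(D,C')=S$: for then $0=\theta_p(W(D,C))\in\Phi_{\theta_p}^{\rm NT}(K)$, contradicting the hypothesis $0\notin\Phi_{\theta_p}^{\rm NT}(K)$.

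To prove this vanishing, I would note first that each trivially colored crossing of $(D,C')$ contributes a weight of the form $\pm\big((a,b),(a,b)\big)$, on which $\theta_p$ vanishes by Lemma~\ref{lem:cocycle}, while at a nontrivially colored crossing the four incident regions carry colors $a,b,c,[a,b,c]$, all of which lie in $S=\C(D,C')$. Thus $W(D,C')$ belongs to the finitely generated submodule
\[
M_S:=\big\langle\, \big((a,b),(a,c)\big) \mid a,b,c\in S,\ a-b+c\in S \,\big\rangle\subset C_2^{\rm SLB}(\mathbb Z_p),
\]
and by Lemma~\ref{lem:weightsum2} it also lies in $\ker\partial_2^{\rm SLB}$. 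So the claim reduces to verifying, for each admissible pair $(p,S)$, that $\theta_p$ vanishes on $M_S\cap\ker\partial_2^{\rm SLB}$ — equivalently, the coboundary condition that there is a homomorphism $\varphi\colon\partial_2^{\rm SLB}(M_S)\to\mathbb Z_p$ with $\theta_p(g)=\varphi\big(\partial_2^{\rm SLB}(g)\big)$ for every generator $g$ of $M_S$. Given such a $\varphi$, Lemma~\ref{lem:weightsum2} gives $\theta_p(W(D,C'))=\varphi\big(\partial_2^{\rm SLB}(W(D,C'))\big)=\varphi(0)=0$, as desired.

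I expect this last verification to be the main obstacle: it is a finite but delicate case analysis over $p\in\{7,11,13,19,23,29,31\}$ and the candidate sets $S$ of Theorem~\ref{thm:colorcandidate}. For each $(p,S)$ one would enumerate the finitely many generators of $M_S$, compute their $\partial_2^{\rm SLB}$-images in $C_1^{\rm SLB}(\mathbb Z_p)$ (rewriting them, via Lemma~\ref{lem:chaingroup}, in terms of the normalized free and $2$-torsion generators, which is where the bookkeeping becomes fiddly), evaluate $\theta_p$ on the generators by the formula of Lemma~\ref{lem:cocycle} (reducing the integer numerator modulo $p^2$ before dividing by $p$), and then solve the resulting finite $\mathbb Z_p$-linear system for $\varphi$. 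I also expect the exclusion of $p=17$ to be forced precisely at this step: for $p=17$ there should be a candidate set $S$ among the twelve in Theorem~\ref{thm:colorcandidate}(vi) for which $\theta_{17}$ does not vanish on $M_S\cap\ker\partial_2^{\rm SLB}$, so no such $\varphi$ exists and the method cannot conclude.
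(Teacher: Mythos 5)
Your plan is the same as the paper's: assume a nontrivially colored diagram with $\lfloor\log_2 p\rfloor+2$ colors exists, normalize the color set to one of the candidates of Theorem~\ref{thm:colorcandidate} by an affine transformation, and then use $\partial_2^{\rm SLB}(W(D,C))=0$ (Lemma~\ref{lem:weightsum2}) to force $\theta_p(W(D,C))=0$, contradicting $0\notin\Phi_{\theta_p}^{\rm NT}(K)$. Your reformulation --- factor $\theta_p|_{M_S}$ through $\partial_2^{\rm SLB}|_{M_S}$ via a map $\varphi$ --- is logically equivalent to the paper's procedure of writing $W(D,C)$ as an unknown linear combination of the generators of your $M_S$, extracting linear relations on the coefficients from $\partial_2^{\rm SLB}(W(D,C))=0$, and evaluating $\theta_p$ on the constrained combination. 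Your handling of the affine normalization via Proposition~\ref{thm:cocycleinvariantforaffineequivalence} is slightly roundabout (once $C'=sC+t$ is itself a nontrivial coloring of $D$, the value $\theta_p(W(D,C'))$ already lies in $\Phi_{\theta_p}^{\rm NT}(K)$, so one does not need to transfer back to $C$), but it is correct. Your prediction about $p=17$ is also confirmed by the paper's Remark following Proposition~\ref{thm:main4}: there is a $17$-colorable knot with $0\notin\Phi_{\theta_{17}}^{\rm NT}(K)$ and only $6$ colors, so the vanishing genuinely fails for at least one candidate set in case (vi).

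The gap is that the decisive step --- verifying the vanishing of $\theta_p$ on $M_S\cap\ker\partial_2^{\rm SLB}$ for every admissible pair $(p,S)$ --- is only announced, not performed, and this verification \emph{is} the proof: it occupies all of the paper's Section~7. Two concrete observations make it tractable and would have let you carry out at least one case. First, every generator of the form $\big((a,a),(a,b)\big)$ is killed by $\theta_p$ outright, since the prefactor $a-b$ in Lemma~\ref{lem:cocycle} becomes $a-a=0$; hence only the few generators $\big((a,b),(a,c)\big)$ with $a,b,c$ pairwise constraints $a\neq b$, $b\neq c$ and $a-b+c\in S$ matter, and for each candidate $S$ there are at most a handful of these (up to the identifications of Lemma~\ref{lem:chaingroup}). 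Second, the boundary condition then either forces all of their coefficients $t_i$ to vanish, or (in the cases containing $\{0,1,2,3\}$) forces relations such as $t_1=-t_2=t_3=t_6$ under which the explicit $\theta_p$-values sum to zero, e.g.\ $15t_1-10t_1+4t_1+10t_1=0$ in $\mathbb Z_{19}$. Without exhibiting these computations (or the map $\varphi$) for each $(p,S)$, the argument is a reduction of the theorem to a finite check rather than a proof of it.
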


The next proposition partially gives that the answer to (2) of Question~\ref{question1} is ``Yes'' for some $p$.
This also gives a partial answer to (3) of Question~\ref{question1} together with the results in \cite{MatsudoOshiroYamagishi-1}, i.e., the answer to (3) is ``Yes" for some $p$.. 

\begin{proposition}\label{thm:main4} 
Let $p$ be an odd prime number with $5<p<2^5$ and $p\not =17$.
Then there exists a Dehn $p$-colorable knot $K$ with 
\[
\mincol_p(K) \geq \lfloor \log_2 p \rfloor +3.
\]
In particular, when $p\in \{7, 11, 13, 19, 29\}$, we have a Dehn $p$-colorable knot $K$ with 
\[
\mincol_p(K) = \lfloor \log_2 p \rfloor +3,
\]
and when $p\in \{23,31\}$, we have a Dehn $p$-colorable knot $K$ with
\[
\mincol_p(K) = \lfloor \log_2 p \rfloor +3 \mbox{ or } \lfloor \log_2 p \rfloor +4.
\]  
\end{proposition}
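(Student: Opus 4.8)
The plan is to combine Theorem~\ref{thm:main2} with explicit computations on carefully chosen diagrams, doing the cases $p=7,11,13,19,23,29,31$ one at a time. For the first (lower-bound) assertion it suffices, by Theorem~\ref{thm:main2}, to exhibit for each such $p$ a single Dehn $p$-colorable knot $K$ and a diagram $D$ of $K$ with $0 \notin \Phi_{\theta_p}^{\rm NT}(D)$. First I would fix $K$: the natural candidates are small two-bridge knots whose determinant is divisible by $p$ --- e.g. a torus knot $T(2,p)$ or a knot from the standard table --- together with a reasonably small diagram $D$. Then I would enumerate the nontrivial Dehn $p$-colorings of $D$; by Lemma~\ref{lemma:colorequiv} and, crucially, Proposition~\ref{thm:cocycleinvariantforaffineequivalence} (which gives $\theta_p(W(D,sC+t)) = s^2\theta_p(W(D,C))$ and hence, since $s^2\ne 0$ in $\mathbb Z_p$, that $\theta_p(W(D,C))=0$ depends only on the affine class of $C$), it is enough to run over one representative $C$ of each affine-equivalence class of nontrivial colorings. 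For each representative I would build the chain $W(D,C)\in C_2^{\rm SLB}(\mathbb Z_p)$ by choosing a specified region and weight $\varepsilon\big((a,b),(a,c)\big)$ at every crossing as in Section~\ref{sec:cocycleinvariants}, and evaluate $\theta_p$ via the explicit formula of Lemma~\ref{lem:cocycle}. If every such representative yields a nonzero value, Theorem~\ref{thm:main2} gives $\mincol_p(K)\ge\lfloor\log_2 p\rfloor+3$.

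For the sharper statements I would then produce matching upper bounds for the \emph{same} knot $K$. For $p\in\{7,11,13,19,29\}$ the target is a nontrivially Dehn $p$-colored diagram $(D',C')$ of $K$ with $\#\C(D',C')=\lfloor\log_2 p\rfloor+3$; together with the lower bound this forces $\mincol_p(K)=\lfloor\log_2 p\rfloor+3$. Such a diagram is found by a direct search: starting from a diagram carrying a nontrivial coloring, one applies Reidemeister moves (particularly those that create or destroy regions) so as to cut down the number of distinct colors that appear while keeping the coloring nontrivial. For $p\in\{23,31\}$ I expect this search to reach only $\lfloor\log_2 p\rfloor+4$ colors, which, combined with the lower bound, yields $\mincol_p(K)\in\{\lfloor\log_2 p\rfloor+3,\lfloor\log_2 p\rfloor+4\}$; the remaining prime $17$ in the range is excluded simply because Theorem~\ref{thm:main2} does not cover it (the list in Theorem~\ref{thm:colorcandidate}(vi) being too large for that argument).

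Assembling these cases proves the proposition. Moreover, for those $p$ where Proposition~\ref{thm:pcoloreddiagram} also applies (e.g. $p=7$, where it gives a Dehn $7$-colorable knot with $\mincol_7=\lfloor\log_2 7\rfloor+2=4$, while our $K$ has $\mincol_7=5$), one immediately gets two Dehn $p$-colorable knots with distinct minimum numbers of colors, answering part~(3) of Question~\ref{question1} affirmatively for such $p$.

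The main obstacle is the explicit evaluation of $\Phi_{\theta_p}^{\rm NT}(K)$. One must choose a knot whose diagram is small enough that the (finitely many, but possibly numerous --- roughly on the order of $p$) affine-equivalence classes of nontrivial Dehn $p$-colorings can actually be enumerated, and then carry out the weight sums in $C_2^{\rm SLB}(\mathbb Z_p)$ and the cocycle evaluations; this is routine in principle but bookkeeping-heavy, best organized with computer assistance, and may require testing several candidate knots before one is found for which the value $0$ is genuinely avoided by \emph{every} nontrivial coloring.
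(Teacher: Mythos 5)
Your proposal matches the paper's proof in all essentials: for each $p$ the paper exhibits an explicit knot and diagram, obtains the upper bound from a concretely colored diagram (with $\lfloor\log_2 p\rfloor+3$ colors for $p\in\{7,11,13,19,29\}$ and $\lfloor\log_2 p\rfloor+4$ for $p\in\{23,31\}$), and gets the lower bound by verifying $0\notin\Phi_{\theta_p}^{\rm NT}(K)$ — reducing to affine-class representatives via Proposition~\ref{thm:cocycleinvariantforaffineequivalence} exactly as you describe — and invoking Theorem~\ref{thm:main2}. The only cosmetic difference is that for $p\in\{13,29\}$ the paper skips the cocycle computation entirely and cites Theorem~\ref{thm:main3} for the lower bound, which your plan would also permit.
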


\begin{remark}
When $p=17$, Theorem~\ref{thm:main2} does not hold, because 
there exists a Dehn $17$-colorable knot $K$ with $0 \not\in \Phi_{\theta_{17}}^{\rm NT}(K)$ and  $\mincol_{17}(K)=\lfloor \log_2 17 \rfloor +2= 6$, which is shown below more precisely.

Let $K$ be the knot represented by the diagram $D$ in Figure~\ref{p=17diagex}.
As shown in Figure~\ref{p=17diagex}, there exists a Dehn $17$-coloring $C$ with $\#\C(D,C)=6$, which together with Theorem~\ref{th:MOY1-1} implies that $\mincol_{17}(K)=6$. 

On the other hand, since 
\begin{align*}
\Phi_{\theta_{17}}^{\rm NT}(K)=&\{1 (578\mbox{ times}), 2 (578\mbox{ times}), 4 (578\mbox{ times}), 8 (578\mbox{ times}), \\
&9 (578\mbox{ times}), 13 (578\mbox{ times}), 15 (578\mbox{ times}), 16 (578\mbox{ times})\}, 
\end{align*}
$0 \not\in \Phi_{\theta_{17}}^{\rm NT}(K)$ holds.
\begin{figure}[h]
\begin{center}
\includegraphics[width=6cm]{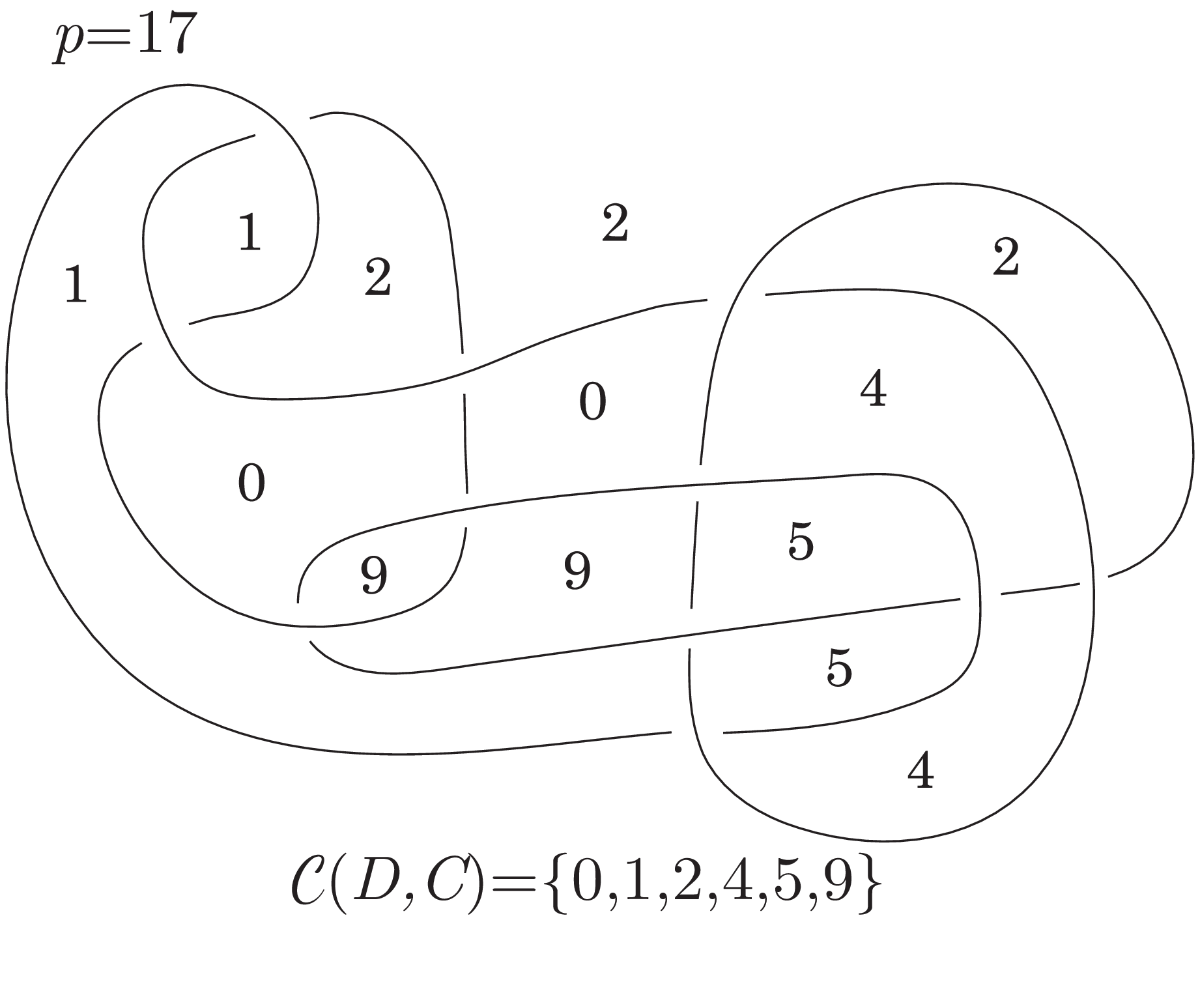}
\caption{} 
\label{p=17diagex}
\end{center}
\end{figure}

\end{remark}

The next questions are open.
\begin{question}
\begin{enumerate}
\item  When $p=17$, does there exist a Dehn $p$-colorable knot $K$ with 
 $$\mincol_p(K) > \lfloor  \log_2 p \rfloor +2~?$$ 
\item When $p\in \{17, 23, 31\}$, 
does there exist a Dehn $p$-colorable knot $K$ with 
 $$\mincol_p(K) = \lfloor  \log_2 p \rfloor +3~?$$ 
\end{enumerate}
\end{question}

\begin{remark}
The cocycle invariant $\Phi_{\theta_p}^{\rm NT}(K)$ for an unoriented knot $K$ can be interpreted by the shadow quandle cocycle invariants of $K$ with the nontrivial shadow $R_p$-colorings and the Mochizuki's 3-cocycle $\theta_p^{\rm M}$ (see \cite{Oshiro20}), where $R_p$ is the dihedral quandle of order $p$.
Here when we calculate the shadow quandle cocycle invariants of $K$, we may give an arbitrary orientation for $K$.
When we denote by $\tilde\Phi_{\theta_p^{\rm M}}^{\rm NT}(K)$ the shadow quandle cocycle invariant corresponding to $\Phi_{\theta_p}^{\rm NT}(K)$, Theorem~\ref{thm:main2} can be also written as the following corollary:
\begin{corollary}
Let $p$ be an odd prime number with $5<p<2^5$ and $p\not =17$.
Let $K$ be a Dehn $p$-colorable knot (i.e., an $R_p$-colorable knot). 

If the shadow quandle cocycle invariant $\tilde\Phi_{\theta_p^{\rm M}}^{\rm NT}(K)$ satisfies $0 \not\in \tilde\Phi_{\theta_p^{\rm M}}^{\rm NT}(K)$, then we have 
\[
\mincol_p(K) \geq \lfloor \log_2 p \rfloor +3.
\]
\end{corollary}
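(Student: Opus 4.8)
The plan is to deduce the corollary from Theorem~\ref{thm:main2} by translating its hypothesis into the language of shadow quandle cocycle invariants. First I would recall, exactly as in the proof of Proposition~\ref{thm:cocycleinvariantforaffineequivalence} and following \cite{Oshiro20}, that for any Dehn $p$-colored diagram $(D,C)$ of $K$ (with an arbitrary orientation fixed on $K$) one has
\[
\theta_p(W(D,C)) = 2^2\,\theta_p^{\rm M}\big(W(D,\oline{C})\big),
\]
where $\oline{C}$ denotes the shadow $R_p$-coloring corresponding to $C$. Since the assignment $C\mapsto\oline{C}$ is a bijection from the set of Dehn $p$-colorings of $D$ onto the set of shadow $R_p$-colorings of the oriented diagram $D$, and since it restricts to a bijection between the nontrivial colorings on the two sides, it follows that, as multisets,
\[
\Phi_{\theta_p}^{\rm NT}(K) = 2^2\cdot\tilde\Phi_{\theta_p^{\rm M}}^{\rm NT}(K),
\]
where $2^2\cdot(-)$ means multiplying each entry by $4\in\mathbb Z_p$; this operation is a bijection of $\mathbb Z_p$, hence preserves multiplicities.

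Next I would use that $p$ is an odd prime, so $4$ is a unit in $\mathbb Z_p$ and hence $4m=0$ in $\mathbb Z_p$ forces $m=0$. Consequently $0\in\Phi_{\theta_p}^{\rm NT}(K)$ if and only if $0\in\tilde\Phi_{\theta_p^{\rm M}}^{\rm NT}(K)$; in particular the hypothesis $0\notin\tilde\Phi_{\theta_p^{\rm M}}^{\rm NT}(K)$ is equivalent to $0\notin\Phi_{\theta_p}^{\rm NT}(K)$. Under the standing assumptions $5<p<2^5$ and $p\neq 17$, Theorem~\ref{thm:main2} then yields $\mincol_p(K)\geq\lfloor\log_2 p\rfloor+3$, which is exactly the claimed inequality.

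The only point requiring a little care is the identification $\Phi_{\theta_p}^{\rm NT}(K)=2^2\cdot\tilde\Phi_{\theta_p^{\rm M}}^{\rm NT}(K)$ as multisets: one must check that passing from a Dehn $p$-coloring $C$ to its associated shadow $R_p$-coloring $\oline{C}$ is compatible with the notion of nontriviality used on each side and that no multiplicities are lost or gained. Both points are already contained in \cite{Oshiro20} (and implicit in the proof of Proposition~\ref{thm:cocycleinvariantforaffineequivalence}), so I do not expect any genuine obstacle; the substance of the corollary is entirely carried by Theorem~\ref{thm:main2}.
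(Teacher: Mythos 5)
Your proposal is correct and follows exactly the route the paper intends: the remark preceding the corollary invokes the identity $\theta_p(W(D,C))=2^2\,\theta_p^{\rm M}(W(D,\oline{C}))$ from \cite{Oshiro20}, so that $0\in\Phi_{\theta_p}^{\rm NT}(K)$ if and only if $0\in\tilde\Phi_{\theta_p^{\rm M}}^{\rm NT}(K)$ (since $4$ is a unit in $\mathbb Z_p$), and the conclusion is then immediate from Theorem~\ref{thm:main2}. Your added care about the bijection $C\mapsto\oline{C}$ preserving nontriviality and multiplicities is the right point to check, and it is indeed covered by the cited reference.
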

 
\end{remark}

\section{Proof of Theorem~\ref{thm:main3}}

%%%%%%%%%%%%%%%%%%%
In this section, a crossing of type $\big((a,b),(a,c)\big)$ means a crossing whose weight is $+\big((a,b),(a,c)\big)$ or $-\big((a,b),(a,c)\big)$.

\begin{proof}[Proof of Theorem~\ref{thm:main3}~{\rm (}$p=13${\rm )}]
Let $(D,C)$ be a nontrivially Dehn $13$-colored diagram of a knot $K$.
Assume that $\#\C(D,C)=\lfloor \log_2 13 \rfloor +2=5$. 
Then,  by Lemma~\ref{lemma:colorequiv} and Theorem~\ref{thm:colorcandidate}, we may assume that $\C(D,C)=\{0,1,2,4,7\}$.

For any nontrivially colored crossing $\chi$, the weight of $\chi$ is 
\begin{align*}
&&w_{\chi} =& 
\pm   \big((0,0),(0,1)\big) , \pm  \big((0,0),(0,2)\big) , \pm  \big((0,0),(0,4)\big) \\
&&&  \pm  \big((0,0),(0,7)\big),  \pm  \big((1,1),(1,2)\big), \pm  \big((1,1),(1,4)\big) \\ &&&  \pm  \big((1,1),(1,7)\big), \pm  \big((2,2),(2,4)\big), \pm  \big((2,2),(2,7)\big) \\
&&&   \pm  \big((4,4),(4,7)\big), \pm  \big((0,1),(0,2)\big), \pm  \big((0,2),(0,4)\big) \\ &&&  \pm  \big((0,7),(0,1)\big) , \mbox{ or } \pm  \big((1,4),(1,7)\big). 
\end{align*}
Hence, the sum of the weights of all the nontrivially colored crossings is 
\begin{align*}
&&W(D,C) =& 
s_1   \big((0,0),(0,1)\big) + s_2  \big((0,0),(0,2)\big) + s_3  \big((0,0),(0,4)\big) \\
&&&  s_4  \big((0,0),(0,7)\big) +  s_5  \big((1,1),(1,2)\big) + s_6  \big((1,1),(1,4)\big) \\ &&&  s_7  \big((1,1),(1,7)\big) + s_8  \big((2,2),(2,4)\big) + s_9  \big((2,2),(2,7)\big) \\
&&&   s_{10}  \big((4,4),(4,7)\big) + t_1  \big((0,1),(0,2)\big) + t_2  \big((0,2),(0,4)\big) \\ 
&&&  t_3  \big((0,7),(0,1)\big) + t_4  \big((1,4),(1,7)\big). 
\end{align*}
for some $s_1, \ldots , s_{10}$, $t_1, \ldots , t_4 \in \mathbb Z$, where we note that when we regard $W(D,C)$ as an element of $C_2^{\rm SLB}(\mathbb Z_{13})$,  $s_1, \ldots , s_{10}$ should be dealt as elements of $\mathbb Z_2$. Since
\begin{align*}
0=&\partial_2^{\rm SLB}(W(D,C))\\
=&s_1   \partial_2^{\rm SLB}\Big(\big((0,0),(0,1)\big)\Big) + s_2  \partial_2^{\rm SLB}\Big(\big((0,0),(0,2)\big)\Big) + s_3  \partial_2^{\rm SLB}\Big(\big((0,0),(0,4)\big)\Big) \\
&   s_4  \partial_2^{\rm SLB}\Big(\big((0,0),(0,7)\big)\Big) + s_5  \partial_2^{\rm SLB}\Big(\big((1,1),(1,2)\big)\Big) + s_6  \partial_2^{\rm SLB}\Big(\big((1,1),(1,4)\big)\Big) \\
&   s_7  \partial_2^{\rm SLB}\Big(\big((1,1),(1,7)\big)\Big) + s_8  \partial_2^{\rm SLB}\Big(\big((2,2),(2,4)\big)\Big) + s_9  \partial_2^{\rm SLB}\Big(\big((2,2),(2,7)\big)\Big) \\
&   s_{10}  \partial_2^{\rm SLB}\Big(\big((4,4),(4,7)\big)\Big) + t_1  \partial_2^{\rm SLB}\Big(\big((0,1),(0,2)\big)\Big) + t_2  \partial_2^{\rm SLB}\Big(\big((0,2),(0,4)\big)\Big) \\
&   t_3  \partial_2^{\rm SLB}\Big(\big((0,7),(0,1)\big)\Big)+ t_4  \partial_2^{\rm SLB}\Big(\big((1,4),(1,7)\big)\Big) \\
=&s_1\Big((0,0)-(1,1)\Big) + s_2\Big((0,0)-(2,2)\Big) + s_3\Big((0,0)-(4,4)\Big) \\
&+ s_4\Big((0,0)-(7,7)\Big) + s_5\Big((1,1)-(2,2)\Big) + s_6\Big((1,1)-(4,4)\Big) \\
&+ s_7\Big((1,1)-(7,7)\Big) + s_8\Big((2,2)-(4,4)\Big) + s_9\Big((2,2)-(7,7)\Big) \\
& + s_{10}\Big((4,4)-(7,7)\Big) + t_1\Big((0,1)-(0,2)+(1,1)+(1,2)\Big) \\
&+ t_2\Big((0,2)-(0,4)+(2,2)+(2,4)\Big) \\
&+ t_3\Big((0,7)-(0,1)+(7,7)-(1,7)\Big) \\
&+ t_4\Big((1,4)-(1,7)+(4,4)+(4,7)\Big) \\
=&(s_1+s_2+s_3+s_4)  (0,0)  +(-s_1+s_5+s_6+s_7+t_1)  (1,1)  \\
&+ (-s_2-s_5+s_8+s_9+t_2) (2,2) +(-s_3-s_6-s_8+s_{10}+t_4)  (4,4) \\
&+ (-s_4-s_7-s_9-s_{10}+t_3)  (7,7) 
+(t_1-t_3)  (0,1)  + (-t_1+t_2) (0,2)\\
& -t_2(0,4)  +  t_3(0,7)   +   t_1(1,2)  +  t_4(1,4) +(-t_3-t_4)(1,7)  +  t_2(2,4)   +  t_4(4,7)
\end{align*}
from Lemma~\ref{lem:weightsum2}, 
it follows that 
\begin{align*}
&s_1 \equiv s_5+s_6+s_7 \mbox{ (mod $2$)}, \\
&s_2 \equiv -s_5+s_8+s_9 \mbox{ (mod $2$)}, \\
&s_3 \equiv -s_6-s_8+s_{10} \mbox{ (mod $2$)} \\
&s_4 \equiv -s_7-s_9-s_{10} \mbox{ (mod $2$), and} \\
&t_1=t_2=t_3=t_4=0. 
\end{align*}

This means that there are the same number of crossings with different signs for each type of nontrivially colored crossings in Figure~\ref{p=13crossing}. We note that trivially colored crossings and nontivially colored crossings of type $\big((a,a),(a,b)\big)$ may also exist. 

Suppose that crossings of type $\big((1,4),(1,7)\big)$ exist. Considering how to connect each crossing, we can see that $\{1,4\}$-semiarcs and $\{4,7\}$-semiarcs make a knot component, and other arcs make other components (see Figure~\ref{p=13crossingconnect2}), which contradicts that $K$ is a knot. Next, suppose that crossings of type $\big((1,4),(1,7)\big)$ do not exist and crossings of type $\big((0,7),(0,1)\big)$ exist. Considering how to connect each crossing, we can see that $\{0,7\}$-semiarcs and $\{1,7\}$-semiarcs make a knot component, and other arcs make other components (see Figure~\ref{p=13crossingconnect3}), which contradicts that $K$ is a knot. Similarly, the same holds for cases such that there exist crossings of type $\big((0,1),(0,2)\big)$, but do not exist crossings of type $\big((1,4),(1,7)\big)$ and $\big((0,7),(0,1)\big)$, that there exist crossings of type $\big((0,2),(0,4)\big)$, but do not exist crossings of type $\big((1,4),(1,7)\big)$, $\big((0,7),(0,1)\big)$ and $\big((0,1),(0,2)\big)$, and that there do not exist crossings of type $\big((1,4),(1,7)\big)$, $\big((0,7),(0,1)\big)$, $\big((0,2),(0,4)\big)$ and $\big((0,1),(0,2)\big)$.

Thus, $\sharp \mathcal{C}(D,C) \geq 6$ holds, and consequently $\mincol_{13}(K) \geq \lfloor \log_2 13 \rfloor +3$. 

\begin{figure}[h]
\begin{center}
\includegraphics[width=10cm]{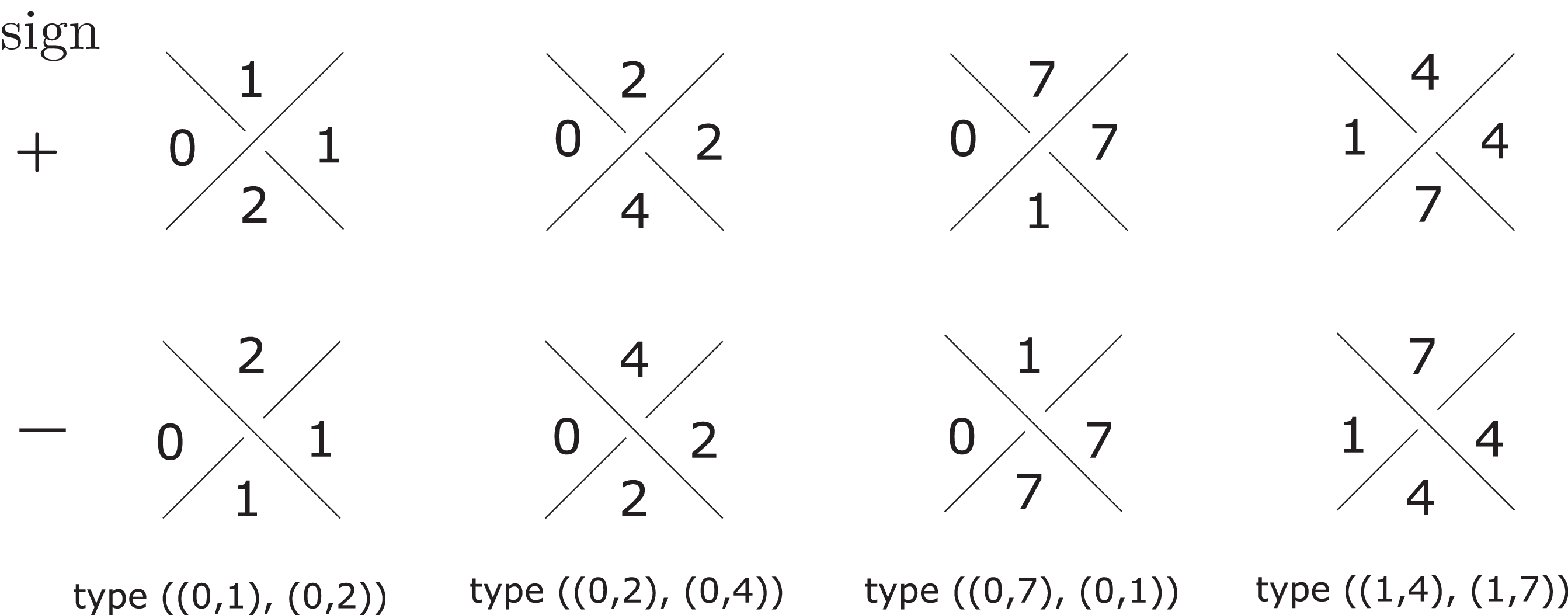}
\caption{} 
\label{p=13crossing}
\end{center}
\end{figure}

\begin{figure}[h]
\begin{center}
\includegraphics[width=8cm]{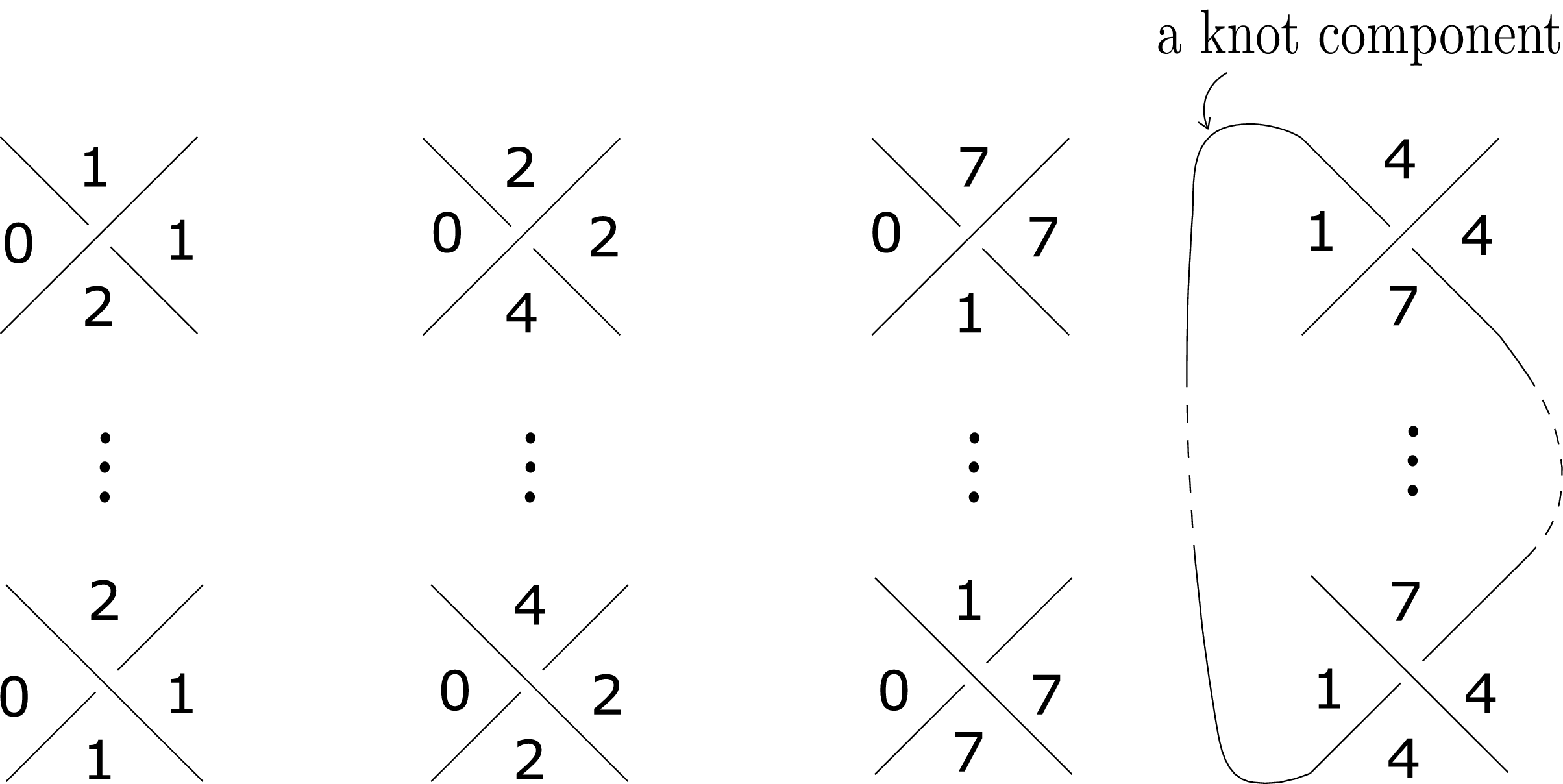}
\caption{} 
\label{p=13crossingconnect2}
\end{center}
\end{figure}

\begin{figure}[h]
\begin{center}
\includegraphics[width=6cm]{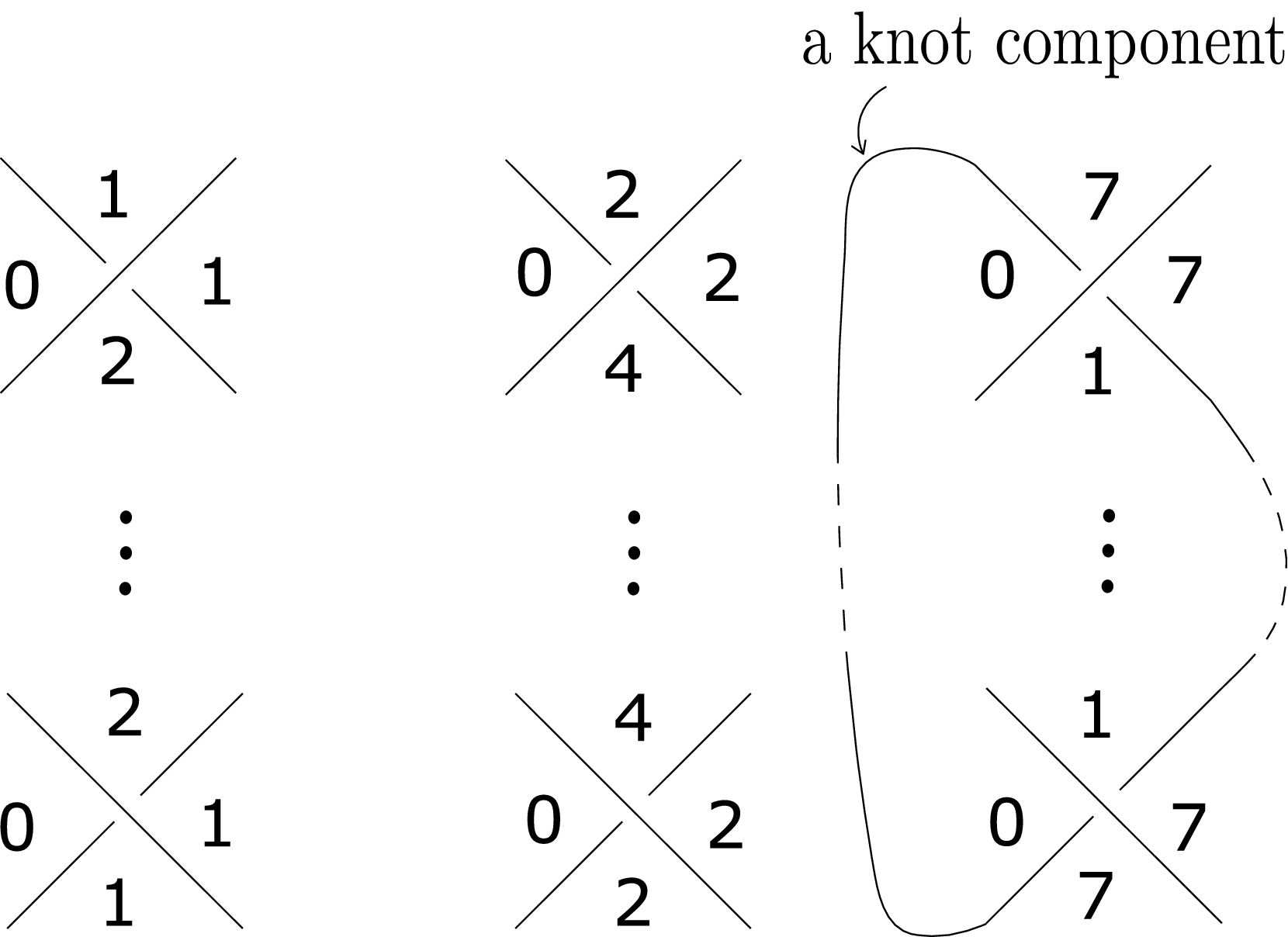}
\caption{} 
\label{p=13crossingconnect3}
\end{center}
\end{figure}

\end{proof}

\begin{proof}[Proof of Theorem~\ref{thm:main3}~{\rm (}$p=29${\rm )}]
Let $(D,C)$ be a nontrivially Dehn $29$-colored diagram of a knot $K$.
Assume that $\#\C(D,C)=\lfloor \log_2 29 \rfloor +2=6$. 
Then,  by Lemma~\ref{lemma:colorequiv} and Theorem~\ref{thm:colorcandidate}, we may assume that $\C(D,C)=\{0,1,2,4,8,15\}$.

Then, the sum of the weights of all the nontrivially colored crossings is
\begin{align*}
&&W(D,C) =& 
\displaystyle \sum_{a,b\in \C(D,C);a<b} s_{a,b}  \big((a,a),(a,b)\big) \\
&&&  + t_1  \big((0,1),(0,2)\big) + t_2  \big((0,2),(0,4)\big) + t_3  \big((0,4),(0,8)\big) \\
&&&  + t_4  \big((0,15),(0,1)\big) + t_5  \big((1,8),(1,15)\big)  
\end{align*}
for some $s_{a,b}$'s, $t_1, \ldots , t_5 \in \mathbb Z$, where we note that when we regard $W(D,C)$ as an element of $C_2^{\rm SLB}(\mathbb Z_{29})$,  $s_{a,b}$'s should be dealt as elements of $\mathbb Z_2$.
Since
\begin{align*}
0=&\partial_2^{\rm SLB}(W(D,C))\\
=&\displaystyle \sum_{a\in \C(D,C)}   S_{a}  (a,a) +(t_1-t_4) (0,1)  +  (-t_1+t_2) (0,2) \\
&+(-t_2+t_3)  (0,4)  -t_3  (0,8)  +t_4 (0,15) \\
&+ t_1  (1,2)  +t_5  (1,8)  + (-t_4-t_5) (1,15) \\
&+t_2  (2,4)  + t_3  (4,8)  +t_5  (8,15) 
\end{align*}
holds for some $S_{a}\in \mathbb Z_2$ from Lemma~\ref{lem:weightsum2}, we obtain
\begin{align*}
&t_1=t_2=t_3=t_4=t_5=0. 
\end{align*}
It means that there are the same number of crossings with different signs for each type of nontrivially colored crossings in Figure~\ref{p=29crossing}. We note that trivially colored crossings and nontivially colored crossings of type $\big((a,a),(a,b)\big)$ may also exist. 

Suppose that crossings of type $\big((1,8),(1,15)\big)$ exist. Considering how to connect each crossings, we can see that $\{1,8\}$-semiarcs and $\{8,15\}$-semiarcs make a knot component, and other arcs make other components (see Figure~\ref{p=29crossingconnect2}), which contradicts that $K$ is a knot. 
Similarly, the same holds for cases such that
there exist crossings of type $\big((0,15),(0,1)\big)$, but do not exist crossings of type $\big((1,8),(1,15)\big)$, 
that there exist crossings of type $\big((0,1),(0,2)\big)$, but do not exist crossings of type $\big((1,8),(1,15)\big)$ and $\big((0,15),(0,1)\big)$, 
that there exist crossings of type $\big((0,2),(0,4)\big)$, but do not exist crossings of type $\big((1,8),(1,15)\big)$, $\big((0,15),(0,1)\big)$ and $\big((0,1),(0,2)\big)$, 
that there exist crossings of type $\big((0,4),(0,8)\big)$, but do not exist crossings of type $\big((1,8),(1,15)\big)$, $\big((0,15),(0,1)\big)$, $\big((0,2),(0,4)\big)$ and $\big((0,1),(0,2)\big)$, and 
that there do not exist crossings of type $\big((1,8),(1,15)\big)$, $\big((0,15),(0,1)\big)$, $\big((0,4),(0,8)\big)$, $\big((0,2),(0,4)\big)$ and $\big((0,1),(0,2)\big)$.

Thus, $\sharp \mathcal{C}(D,C) \geq 7$ holds, and consequently $\mincol_{29}(K) \geq \lfloor \log_2 29 \rfloor +3$. 
\begin{figure}[h]
\begin{center}
\includegraphics[width=10cm]{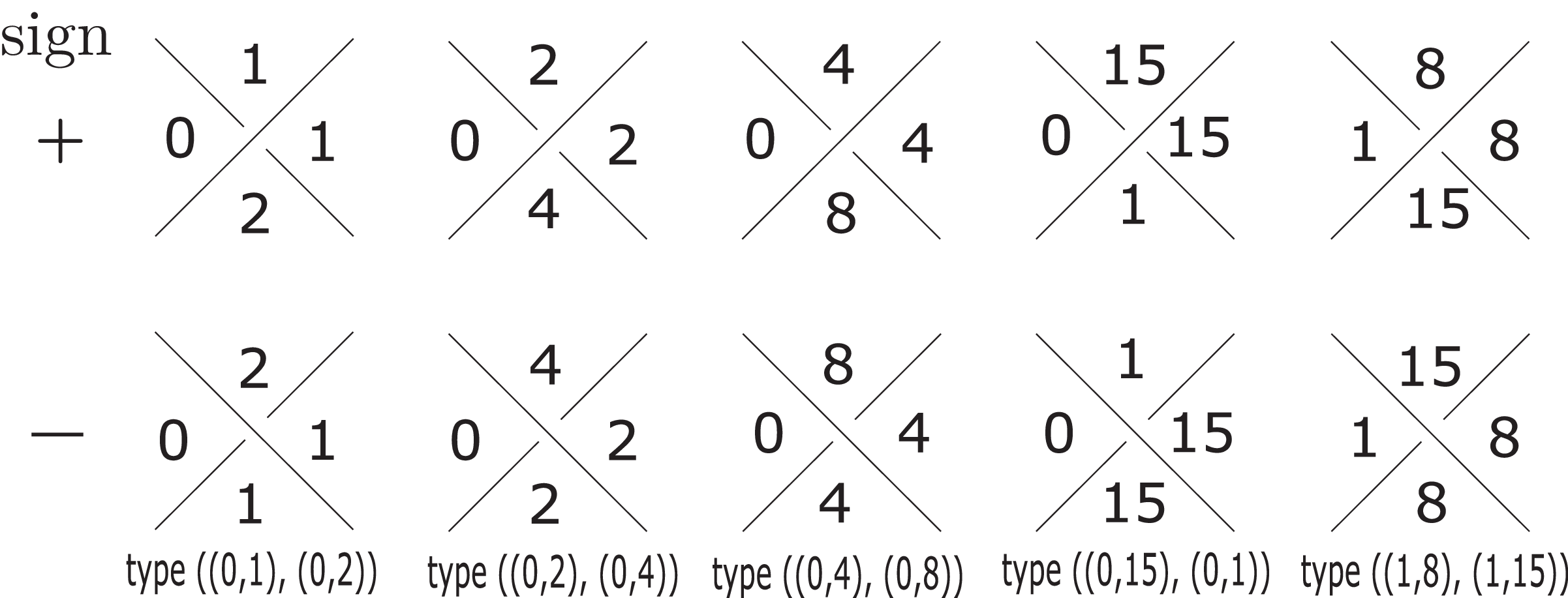}
\caption{} 
\label{p=29crossing}
\end{center}
\end{figure}

\begin{figure}[h]
\begin{center}
\includegraphics[width=8cm]{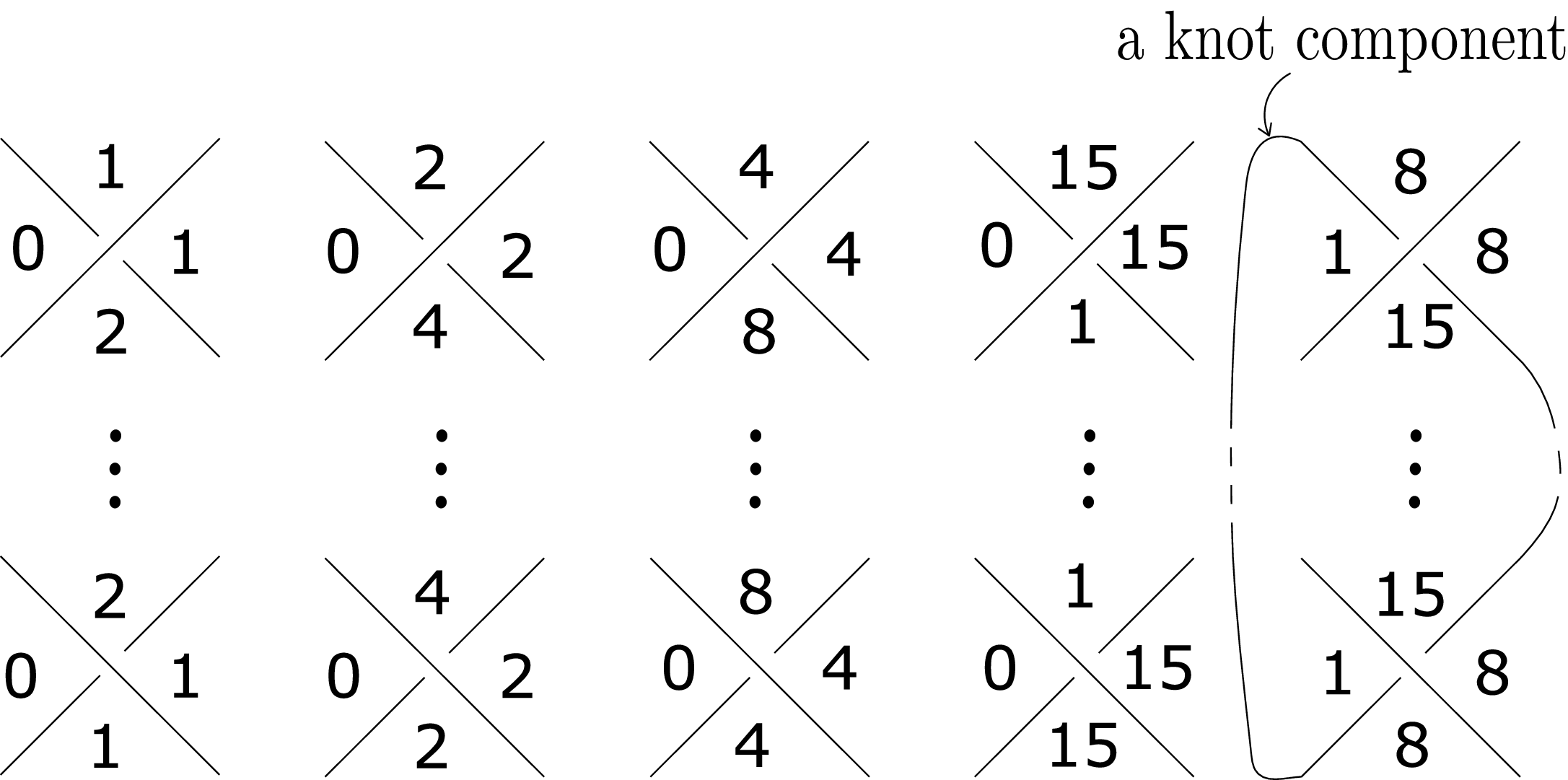}
\caption{} 
\label{p=29crossingconnect2}
\end{center}
\end{figure}

\end{proof}
%%%%%%%%%%%%%%%%%%%%%%

\section{Proof of Theorem~\ref{thm:main2}}
\begin{proof}[Proof of Theorem~\ref{thm:main2}~{\rm (}$p=7${\rm )}]
Let $(D,C)$ be a nontrivially Dehn $7$-colored diagram of a knot $K$ with $0 \not\in \Phi_{\theta_7}^{\rm NT}(K)$.
Assume that $\#\C(D,C)=4$. 
Then, by Lemma~\ref{lemma:colorequiv} and Theorem~\ref{thm:colorcandidate}, we may assume that $\C(D,C)=\{0,1,2,4\}$.

Then, the sum of the weights of all the nontrivially colored crossings are 
\begin{align*}
&&W(D,C) =& s_1 \big((0,0),(0,1)\big) + s_2 \big((0,0),(0,2)\big)+ s_3 \big((0,0),(0,4)\big)\\
&&& + s_4 \big((1,1),(1,2)\big) + s_5 \big((1,1),(1,4)\big)+ s_6  \big((2,2),(2,4)\big)\\
&&&  + t_1  \big((0,1),(0,2)\big) + t_2  \big((0,2),(0,4)\big) + t_3  \big((0,4),(0,1)\big) 
\end{align*}
for some $s_1 ,\ldots , s_6, t_1, \ldots , t_3 \in \mathbb Z$, where we note that when we regard $W(D,C)$ as an element of $C_2^{\rm SLB}(\mathbb Z_7)$,  $s_1 ,\ldots , s_6$ should be dealt as elements of $\mathbb Z_2$.
From Lemma~\ref{lem:weightsum2}, since we have $\partial_2^{\rm SLB}(W(D,C)) =0$, that is, 
\begin{align*}
0=& s_1 \partial_2^{\rm SLB}\Big(\big((0,0),(0,1)\big)\Big) + s_2 \partial_2^{\rm SLB}\Big(\big((0,0),(0,2)\big)\Big)+ s_3 \partial_2^{\rm SLB}\Big(\big((0,0),(0,4)\big)\Big)\\
& + s_4 \partial_2^{\rm SLB}\Big(\big((1,1),(1,2)\big)\Big) + s_5 \partial_2^{\rm SLB}\Big(\big((1,1),(1,4)\big)\Big)+ s_6  \partial_2^{\rm SLB}\Big(\big((2,2),(2,4)\big)\Big)\\
&  + t_1  \partial_2^{\rm SLB}\Big(\big((0,1),(0,2)\big)\Big) + t_2  \partial_2^{\rm SLB}\Big(\big((0,2),(0,4)\big)\Big) + t_3  \partial_2^{\rm SLB}\Big(\big((0,4),(0,1)\big)\Big) \\
%%%%%%%%%%%%%%%%%%%%%%%%%%%%%%%%%%%%
=& s_1 \Big( (0,0) - (1,1) \Big) + s_2\Big( (0,0) - (2,2) \Big)+ s_3 \Big( (0,0) - (4,4) \Big)\\
& + s_4 \Big( (1,1) - (2,2) \Big) + s_5 \Big( (1,1) - (4,4) \Big)+ s_6  \Big( (2,2) - (4,4) \Big)\\
&  + t_1  \Big(- (0,2) + (1,1) + (0,1) + (1,2) \Big) + t_2   \Big(- (0,4) + (2,2) + (0,2) + (2,4) \Big)\\
& + t_3   \Big(- (0,1) + (4,4) + (0,4) - (1,4) \Big) \\
%%%%%%%%%%%%%%%%%%%%%%%%%%%%%%%%%%%%
=&  (s_1+s_2+s_3)   (0,0) + (-s_1+s_4+s_5+t_1)  (1,1)   + (-s_2-s_4 + s_6+t_2)  (2,2) \\
&+  (-s_3  -s_5 -s_6+ t_3) (4,4)  +(t_1-t_3)  (0,1)   +  (-t_1+t_2)  (0,2)  \\
&+(-t_2+t_3)   (0,4)   +t_1  (1,2)   - t_3  (1,4)   +  t_2 (2,4) ,
\end{align*}
it follows that 
\begin{align*}
&s_1 \equiv s_4+s_5 \pmod{2},\\
&s_2\equiv  s_4+s_6 \pmod{2}, \\
&s_3\equiv  s_5+s_6 \pmod{2},  \mbox{ and}\\
&t_1=t_2=t_3=0. \\
\end{align*}

Therefore, we have 
\begin{align*}
\theta_7(W(D,C))=
& (s_4+s_5) \theta_7 \Big(\big((0,0),(0,1)\big)\Big) + (s_4+s_6) \theta_7 \Big(\big((0,0),(0,2)\big)\Big)\\
&+ (s_5+s_6) \theta_7 \Big( \big((0,0),(0,4)\big)\Big)+ s_4 \theta_7 \Big(\big((1,1),(1,2)\big) \Big)\\
&+ s_5 \theta_7 \Big(\big((1,1),(1,4)\big)\Big)+ s_6  \theta_7 \Big(\big((2,2),(2,4)\big)\Big)\\
%%%%%%%%%%%%%%%%%%%%%%%%%
=& (s_4+s_5) 0 + (s_4+s_6) 0+ (s_5+s_6) 0+ s_4 0+ s_5 0+ s_6 0 \\
=& 0,
\end{align*}
which contradicts $0 \not\in \Phi_{\theta_7}^{\rm NT}(K)$. Thus $\# \C(D,C) \geq 5$ holds, and consequently  $\mincol_7(K)\geq 5$.
\end{proof}
Similar arguments apply to the other cases, and hence, we will briefly sketch the proofs of the cases when $p\in \{11,13, 19, 23, 29, 31\}$.

\begin{proof}[Proof of Theorem~\ref{thm:main2}~{\rm (}$p=11${\rm )}]
Let $(D,C)$ be a nontrivially Dehn $11$-colored diagram of a knot $K$ with $0 \not\in \Phi_{\theta_{11}}^{\rm NT}(K)$.
Assume that $\#\C(D,C)=5$. 
Then, by Lemma~\ref{lemma:colorequiv} and Theorem~\ref{thm:colorcandidate},  
we may assume that $\C(D,C)=\{0,1,2,3,6\}$ or $\{0,1,2,4,7\}$.

%%%%%%%%%%%%%%%%%%%%%%%%%
%\red{($\C(D,C)=\{0,1,2,3,6\}$)}\\
For the case that $\C(D,C)=\{0,1,2,3,6\}$, the sum of the weights of all the nontrivially colored crossings is
\begin{align*}
&&W(D,C) =& 
\displaystyle \sum_{a,b\in \C(D,C);a<b} s_{a,b}  \big((a,a),(a,b)\big) \\
&&&  + t_1  \big((0,1),(0,2)\big) + t_2  \big((0,1),(0,3)\big) + t_3  \big((0,2),(0,3)\big) \\
&&&  + t_4  \big((0,3),(0,6)\big)+ t_5  \big((0,6),(0,1)\big)+ t_6  \big((1,2),(1,3)\big) 
\end{align*}
for some $s_{a,b}$'s, $t_1, \ldots , t_6 \in \mathbb Z$, where we note that when we regard $W(D,C)$ as an element of $C_2^{\rm SLB}(\mathbb Z_{11})$,  $s_{a,b}$'s should be dealt as elements of $\mathbb Z_2$.
Since
\begin{align*}
0=&\partial_2^{\rm SLB}(W(D,C))\\
=&\displaystyle \sum_{a\in \C(D,C)}   S_{a}  (a,a) +(t_1+t_2-t_5) (0,1)  +  (-t_1+t_3) (0,2) \\
&+(-t_2-t_3+t_4)  (0,3)  +(-t_4+t_5) (0,6)  + (t_1+t_2-t_3+t_6) (1,2) \\
&+(t_3-t_6)(1,3)  -  t_5(1,6)   +  (t_2+t_6)(2,3)  +  t_4(3,6) 
\end{align*}
holds for some $S_a \in \mathbb{Z}_2$ from Lemma~\ref{lem:weightsum2}, we obtain
\begin{align*}
&t_1=-t_2=t_3=t_6,\mbox{ and  } t_4=t_5=0. 
\end{align*}
Therefore, we have 
\begin{align*}
\theta_{11}(W(D,C))=
& \displaystyle \sum_{a,b\in \C(D,C);a<b} s_{a,b} \theta_{11} \Big( \big((a,a),(a,b)\big) \Big)\\
&+ t_1 \theta_{11} \Big(\big((0,1),(0,2)\big)\Big)- t_1 \theta_{11} \Big(\big((0,1),(0,3)\big)\Big)\\
&+ t_1 \theta_{11} \Big(\big((0,2),(0,3)\big)\Big)+ t_1 \theta_{11} \Big(\big((1,2),(1,3)\big)\Big)\\
=&9t_1 - 9t_1 + 10t_1 + t_1\\
=& 0,
\end{align*}
which contradicts $0 \not\in \Phi_{\theta_{11}}^{\rm NT}(K)$. 

%%%%%%%%%%%%%%%%%%%%%%%%%
%\red{($\C(D,C)=\{0,1,2,4,7\}$)}\\
For the case that $\C(D,C)=\{0,1,2,4,7\}$, the sum of the weights of all the nontrivially colored crossings is
\begin{align*}
&&W(D,C) =& 
\displaystyle \sum_{a,b\in \C(D,C);a<b} s_{a,b}  \big((a,a),(a,b)\big) \\
&&&  + t_1  \big((0,1),(0,2)\big) + t_2  \big((0,2),(0,4)\big) + t_3  \big((0,4),(0,0)\big) \\
&&&  + t_4  \big((1,4),(1,7)\big) + t_5  \big((1,7),(1,2)\big) 
\end{align*}
for some $s_{a,b}$'s, $t_1, \ldots , t_5 \in \mathbb Z$, where we note that when we regard $W(D,C)$ as an element of $C_2^{\rm SLB}(\mathbb Z_{11})$,  $s_{a,b}$ should be dealt as elements of $\mathbb Z_2$.
Since
\begin{align*}
0=&\partial_2^{\rm SLB}(W(D,C))\\
=&\displaystyle \sum_{a\in \C(D,C)}   S_{a}  (a,a)   +t_1 (0,1)  +  (-t_1+t_2) (0,2)  \\
&+(-t_2+t_3)  (0,4)  - t_3 (0,7) +  (t_1-t_5)(1,2) +t_4  (1,4)\\
&+(-t_4+t_5) (1,7)    +  t_2(2,4) -t_5  (2,7)   + (t_3+t_4) (4,7)  
\end{align*}
holds for some $S_a \in \mathbb{Z}_2$ from Lemma~\ref{lem:weightsum2}, we obtain
\begin{align*}
&t_1=t_2=t_3=t_4=t_5=0. 
\end{align*}
Therefore, we have 
\begin{align*}
\theta_{11}(W(D,C))=
& \displaystyle \sum_{a,b\in \C(D,C);a<b} s_{a,b} \theta_{11} \Big( \big((a,a),(a,b)\big) \Big)\\
=& 0,
\end{align*}
which contradicts $0 \not\in \Phi_{\theta_{11}}^{\rm NT}(K)$. 
%%%%%%%%%%%%%%%%%%%%%%%%%

From the above discussion, we conclude that $\# \C(D,C) \geq 6$, and consequently  $\mincol_{11}(K)\geq 6$.
\end{proof}

\begin{proof}[Proof of Theorem~\ref{thm:main2}~{\rm (}$p=13${\rm )}]
When $p=13$, it is clear from Theorem~\ref{thm:main3} that $\mincol_{13}(K)\geq 6$ holds.
\end{proof}

\begin{proof}[Proof of Theorem~\ref{thm:main2}~{\rm (}$p=19${\rm )}]
Let $(D,C)$ be a nontrivially Dehn $19$-colored diagram of a knot $K$ with $0 \not\in \Phi_{\theta_{19}}^{\rm NT}(K)$.
Assume that $\#\C(D,C)=6$. 
Then, by Lemma~\ref{lemma:colorequiv} and Theorem~\ref{thm:colorcandidate}, 
we may assume that $\C(D,C)=\{0,1,2,3,5,10\}$, $\{0,1,2,3,6,10\}$, $\{0,1,2,3,6,11\}$, $\{0,1,2,3,6,12\}$, $\{0,1,2,3,6,13\}$, $\{0,1,2,3,6,14\}$, $\{0,1,2,3,7,12\}$, $\{0,1,2,4,5,10\}$, $\{0,1,2,4,5,14\}$, $\{0,1,2,4,7,12\}$ or $\{0,1,2,4,7,15\}$.

%%%%%%%%%%%%%%%%%%%%%%%%%
%\red{($\C(D,C)=\{0,1,2,3,5,10\}$)}\\
For the case that $\C(D,C)=\{0,1,2,3,5,10\}$, the sum of the weights of all the nontrivially colored crossings is
\begin{align*}
&&W(D,C) =& 
\displaystyle \sum_{a,b\in \C(D,C);a<b} s_{a,b}  \big((a,a),(a,b)\big) \\
&&&  + t_1  \big((0,1),(0,2)\big) + t_2  \big((0,1),(0,3)\big) + t_3  \big((0,2),(0,3)\big) \\
&&&  + t_4  \big((0,2),(0,5)\big) + t_5  \big((0,3),(0,5)\big) + t_6  \big((0,5),(0,10)\big) \\
&&&  + t_7  \big((0,10),(0,1)\big) + t_8  \big((1,2),(1,3)\big) + t_9  \big((1,3),(1,5)\big) 
\end{align*}
for some $s_{a,b}$'s, $t_1, \ldots , t_9 \in \mathbb Z$, where we note that when we regard $W(D,C)$ as an element of $C_2^{\rm SLB}(\mathbb Z_{19})$,  $s_{a,b}$'s should be dealt as elements of $\mathbb Z_2$.
Since
\begin{align*}
0=&\partial_2^{\rm SLB}(W(D,C))\\
=&\displaystyle \sum_{a\in \C(D,C)}   S_{a}   (a,a)  +(t_1+t_2-t_7)  (0,1)   +  (-t_1+t_3+t_4)  (0,2)  \\
&+(-t_2-t_3+t_5)   (0,3)   +(-t_4-t_5+t_6)   (0,5)   +(-t_6+t_7)  (0,10)  \\
&+(t_1+t_2-t_3+t_8)   (1,2)   +(t_3-t_8+t_9)   (1,3)   -t_9  (1,5)  -t_7   (1,10) \\
&  +(t_2+t_4-t_5+t_8)   (2,3)   +t_5  (2,5) +(t_4+t_9)   (3,5)   +t_6   (5,10)  
\end{align*}
holds for some $S_a \in \mathbb{Z}_2$ from Lemma~\ref{lem:weightsum2}, we obtain
\begin{align*}
&t_1=-t_2=t_3=t_8,  \mbox{ and } t_4=t_5=t_6=t_7=t_9=0. 
\end{align*}
Therefore, we have 
\begin{align*}
\theta_{19}(W(D,C))=
& \displaystyle \sum_{a,b\in \C(D,C);a<b} s_{a,b} \theta_{19} \Big( \big((a,a),(a,b)\big) \Big)\\
&+ t_1 \theta_{19} \Big(\big((0,1),(0,2)\big)\Big)- t_1 \theta_{19} \Big(\big((0,1),(0,3)\big)\Big)\\
&+ t_1 \theta_{19} \Big(\big((0,2),(0,3)\big)\Big)+ t_1 \theta_{19} \Big(\big((1,2),(1,3)\big)\Big)\\
=&15t_1 - 10t_1 + 4t_1 + 10t_1\\
=& 0,
\end{align*}
which contradicts $0 \not\in \Phi_{\theta_{19}}^{\rm NT}(K)$. 

%%%%%%%%%%%%%%%%%%%%%%%%%
%\red{($\C(D,C)=\{0,1,2,3,6,10\}$)}\\
For the case that $\C(D,C)=\{0,1,2,3,6,10\}$, the sum of the weights of all the nontrivially colored crossings is
\begin{align*}
&&W(D,C) =& 
\displaystyle \sum_{a,b\in \C(D,C);a<b} s_{a,b}  \big((a,a),(a,b)\big)+ t_1  \big((0,1),(0,2)\big)  \\
&&&  + t_2  \big((0,1),(0,3)\big) + t_3  \big((0,2),(0,3)\big) + t_4  \big((0,3),(0,6)\big)\\
&&&   + t_5  \big((0,10),(0,1)\big) + t_6  \big((1,2),(1,3)\big) + t_7  \big((2,6),(2,10)\big)
\end{align*}
for some $s_{a,b}$'s, $t_1, \ldots , t_7 \in \mathbb Z$, where we note that when we regard $W(D,C)$ as an element of $C_2^{\rm SLB}(\mathbb Z_{19})$,  $s_{a,b}$'s should be dealt as elements of $\mathbb Z_2$.
Since
\begin{align*}
0=&\partial_2^{\rm SLB}(W(D,C))\\
=&\displaystyle \sum_{a\in \C(D,C)}   S_{a}   (a,a)  +(t_1+t_2-t_5)  (0,1)   +  (-t_1+t_3)  (0,2)  +(-t_2-t_3+t_4)   (0,3)\\
&   -t_4   (0,6)   +t_5  (0,10)  +(t_1+t_2-t_3+t_6)   (1,2)   +(t_3-t_6)   (1,3) -t_5  (1,10)  \\
&  +(t_2+t_6)   (2,3)   +t_7   (2,6)   -t_7  (2,10)  +t_4   (3,6)   +t_7   (6,10)  
\end{align*}
holds for some $S_a \in \mathbb{Z}_2$ from Lemma~\ref{lem:weightsum2}, we obtain
\begin{align*}
&t_1=-t_2=t_3=t_6, \mbox{ and }t_4=t_5=t_7=0. 
\end{align*}
Therefore, we have 
\begin{align*}
\theta_{19}(W(D,C))=
& \displaystyle \sum_{a,b\in \C(D,C);a<b} s_{a,b} \theta_{19} \Big( \big((a,a),(a,b)\big) \Big)\\
&+ t_1 \theta_{19} \Big(\big((0,1),(0,2)\big)\Big)- t_1 \theta_{19} \Big(\big((0,1),(0,3)\big)\Big)\\
&+ t_1 \theta_{19} \Big(\big((0,2),(0,3)\big)\Big)+ t_1 \theta_{19} \Big(\big((1,2),(1,3)\big)\Big)\\
=&15t_1 - 10t_1 + 4t_1 + 10t_1\\
=& 0,
\end{align*}
which contradicts $0 \not\in \Phi_{\theta_{19}}^{\rm NT}(K)$.

%%%%%%%%%%%%%%%%%%%%%%%%%
%\red{($\C(D,C)=\{0,1,2,3,6,11\}$)}\\
For the case that $\C(D,C)=\{0,1,2,3,6,11\}$, the sum of the weights of all the nontrivially colored crossings is
\begin{align*}
&&W(D,C) =& 
\displaystyle \sum_{a,b\in \C(D,C);a<b} s_{a,b}  \big((a,a),(a,b)\big) \\
&&&  + t_1  \big((0,1),(0,2)\big) + t_2  \big((0,1),(0,3)\big) + t_3  \big((0,2),(0,3)\big) \\
&&&  + t_4  \big((0,3),(0,6)\big) + t_5  \big((0,11),(0,3)\big) + t_6  \big((1,2),(1,3)\big) \\
&&&  + t_7  \big((1,6),(1,11)\big) + t_8  \big((1,11),(1,2)\big) 
\end{align*}
for some $s_{a,b}$'s, $t_1, \ldots , t_8 \in \mathbb Z$, where we note that when we regard $W(D,C)$ as an element of $C_2^{\rm SLB}(\mathbb Z_{19})$,  $s_{a,b}$'s should be dealt as elements of $\mathbb Z_2$.
Since
\begin{align*}
0=&\partial_2^{\rm SLB}(W(D,C))\\
=&\displaystyle \sum_{a\in \C(D,C)}   S_{a}   (a,a)  +(t_1+t_2)  (0,1)   +  (-t_1+t_3)  (0,2)  \\
&+(-t_2-t_3+t_4-t_5)   (0,3)   -t_4   (0,6)   +t_5  (0,11)  \\
&+(t_1+t_2-t_3+t_6-t_8)   (1,2)   +(t_3-t_6)   (1,3)   + t_7  (1,6)  \\
&+(-t_7+t_8)   (1,11)   +(t_2+t_6)   (2,3)   -t_8  (2,11)  \\
&+t_4   (3,6)   -t_5   (3,11)   +t_7   (6,11)  
\end{align*}
holds for some $S_a \in \mathbb{Z}_2$ from Lemma~\ref{lem:weightsum2}, we obtain
\begin{align*}
&t_1=-t_2=t_3=t_6, \mbox{ and  } t_4=t_5=t_7=t_8=0.
\end{align*}
Therefore, we have 
\begin{align*}
\theta_{19}(W(D,C))=
& \displaystyle \sum_{a,b\in \C(D,C);a<b} s_{a,b} \theta_{19} \Big( \big((a,a),(a,b)\big) \Big)\\
&+ t_1 \theta_{19} \Big(\big((0,1),(0,2)\big)\Big)- t_1 \theta_{19} \Big(\big((0,1),(0,3)\big)\Big)\\
&+ t_1 \theta_{19} \Big(\big((0,2),(0,3)\big)\Big)+ t_1 \theta_{19} \Big(\big((1,2),(1,3)\big)\Big)\\
=&15t_1 - 10t_1 + 4t_1 + 10t_1\\
=& 0,
\end{align*}
which contradicts $0 \not\in \Phi_{\theta_{19}}^{\rm NT}(K)$.

%%%%%%%%%%%%%%%%%%%%%%%%%
%\red{($\C(D,C)=\{0,1,2,3,6,12\}$)}\\
For the case that $\C(D,C)=\{0,1,2,3,6,12\}$, the sum of the weights of all the nontrivially colored crossings is
\begin{align*}
&&W(D,C) =& 
\displaystyle \sum_{a,b\in \C(D,C);a<b} s_{a,b}  \big((a,a),(a,b)\big) + t_1  \big((0,1),(0,2)\big) \\
&&&  + t_2  \big((0,1),(0,3)\big) + t_3  \big((0,2),(0,3)\big) + t_4  \big((0,3),(0,6)\big) \\
&&&  + t_5  \big((0,6),(0,12)\big) + t_6  \big((1,2),(1,3)\big) + t_7  \big((2,12),(2,3)\big)
\end{align*}
for some $s_{a,b}$'s, $t_1, \ldots , t_7 \in \mathbb Z$, where we note that when we regard $W(D,C)$ as an element of $C_2^{\rm SLB}(\mathbb Z_{19})$,  $s_{a,b}$'s should be dealt as elements of $\mathbb Z_2$.
Since
\begin{align*}
0=&\partial_2^{\rm SLB}(W(D,C))\\
=&\displaystyle \sum_{a\in \C(D,C)}   S_{a}   (a,a)  +(t_1+t_2)  (0,1)   +  (-t_1+t_3)  (0,2)  \\
&+(-t_2-t_3+t_4)   (0,3)   +(-t_4+t_5)   (0,6)   -t_5  (0,12)  \\
&+(t_1+t_2-t_3+t_6)   (1,2)   +(t_3-t_6)   (1,3)   + (t_2+t_6-t_7)   (2,3)  \\
&+t_7   (2,12)   +t_4   (3,6)   -t_7   (3,12)  +t_5   (6,12) 
\end{align*}
holds for some $S_a \in \mathbb{Z}_2$ from Lemma~\ref{lem:weightsum2}, we obtain
\begin{align*}
&t_1=-t_2=t_3=t_6, \mbox{ and  } t_4=t_5=t_7=0.
\end{align*}
Therefore, we have 
\begin{align*}
\theta_{19}(W(D,C))=
& \displaystyle \sum_{a,b\in \C(D,C);a<b} s_{a,b} \theta_{19} \Big( \big((a,a),(a,b)\big) \Big)\\
&+ t_1 \theta_{19} \Big(\big((0,1),(0,2)\big)\Big)- t_1 \theta_{19} \Big(\big((0,1),(0,3)\big)\Big)\\
&+ t_1 \theta_{19} \Big(\big((0,2),(0,3)\big)\Big)+ t_1 \theta_{19} \Big(\big((1,2),(1,3)\big)\Big)\\
=&15t_1 - 10t_1 + 4t_1 + 10t_1\\
=& 0,
\end{align*}
which contradicts $0 \not\in \Phi_{\theta_{19}}^{\rm NT}(K)$.

%%%%%%%%%%%%%%%%%%%%%%%%%
%\red{($\C(D,C)=\{0,1,2,3,6,13\}$)}\\
For the case that $\C(D,C)=\{0,1,2,3,6,13\}$, the sum of the weights of all the nontrivially colored crossings is
\begin{align*}
&&W(D,C) =& 
\displaystyle \sum_{a,b\in \C(D,C);a<b} s_{a,b}  \big((a,a),(a,b)\big)+ t_1  \big((0,1),(0,2)\big)  \\
&&&  + t_2  \big((0,1),(0,3)\big) + t_3  \big((0,2),(0,3)\big) + t_4  \big((0,3),(0,6)\big) \\
&&&  + t_5  \big((0,6),(0,0)\big) + t_6  \big((1,2),(1,3)\big) + t_7  \big((1,13),(1,6)\big)
\end{align*}
for some $s_{a,b}$'s, $t_1, \ldots , t_7 \in \mathbb Z$, where we note that when we regard $W(D,C)$ as an element of $C_2^{\rm SLB}(\mathbb Z_{19})$,  $s_{a,b}$'s should be dealt as elements of $\mathbb Z_2$.
Since
\begin{align*}
0=&\partial_2^{\rm SLB}(W(D,C))\\
=&\displaystyle \sum_{a\in \C(D,C)}   S_{a}   (a,a)  +(t_1+t_2)  (0,1)   +  (-t_1+t_3)  (0,2)  \\
&+(-t_2-t_3+t_4)   (0,3)   +(-t_4+t_5)   (0,6)   -t_5  (0,13)  \\
&+(t_1+t_2-t_3+t_6)   (1,2)   +(t_3-t_6)   (1,3)   -t_7  (1,6)   \\
&+t_7  (1,13)  + (t_2+t_6)   (2,3)  +t_4   (3,6)   +(t_5-t_7)   (6,13)  
\end{align*}
holds for some $S_a \in \mathbb{Z}_2$ from Lemma~\ref{lem:weightsum2}, we obtain
\begin{align*}
&t_1=-t_2=t_3=t_6, \mbox{ and  }t_4=t_5=t_7=0.
\end{align*}
Therefore, we have 
\begin{align*}
\theta_{19}(W(D,C))=
& \displaystyle \sum_{a,b\in \C(D,C);a<b} s_{a,b} \theta_{19} \Big( \big((a,a),(a,b)\big) \Big)\\
&+ t_1 \theta_{19} \Big(\big((0,1),(0,2)\big)\Big)- t_1 \theta_{19} \Big(\big((0,1),(0,3)\big)\Big)\\
&+ t_1 \theta_{19} \Big(\big((0,2),(0,3)\big)\Big)+ t_1 \theta_{19} \Big(\big((1,2),(1,3)\big)\Big)\\
=&15t_1 - 10t_1 + 4t_1 + 10t_1\\
=& 0,
\end{align*}
which contradicts $0 \not\in \Phi_{\theta_{19}}^{\rm NT}(K)$. 

%%%%%%%%%%%%%%%%%%%%%%%%%
%\red{($\C(D,C)=\{0,1,2,3,6,14\}$)}\\
For the case that $\C(D,C)=\{0,1,2,3,6,14\}$, the sum of the weights of all the nontrivially colored crossings is
\begin{align*}
&&W(D,C) =& 
\displaystyle \sum_{a,b\in \C(D,C);a<b} s_{a,b}  \big((a,a),(a,b)\big) \\
&&&  + t_1  \big((0,1),(0,2)\big) + t_2  \big((0,1),(0,3)\big) + t_3  \big((0,2),(0,3)\big) \\
&&&  + t_4  \big((0,3),(0,6)\big) + t_5  \big((0,6),(0,1)\big) + t_6  \big((0,14),(0,1)\big) \\
&&&  + t_7  \big((1,2),(1,3)\big) + t_8  \big((3,14),(3,6)\big)
\end{align*}
for some $s_{a,b}$'s, $t_1, \ldots , t_8 \in \mathbb Z$, where we note that when we regard $W(D,C)$ as an element of $C_2^{\rm SLB}(\mathbb Z_{19})$,  $s_{a,b}$'s should be dealt as elements of $\mathbb Z_2$.
Since
\begin{align*}
0=&\partial_2^{\rm SLB}(W(D,C))\\
=&\displaystyle \sum_{a\in \C(D,C)}   S_{a}   (a,a)  +(t_1+t_2-t_5-t_6)  (0,1)   +  (-t_1+t_3)  (0,2)  \\
&+(-t_2-t_3+t_4)   (0,3)   +(-t_4+t_5)   (0,6)   +t_6  (0,14)  \\
&+(t_1+t_2-t_3+t_7)   (1,2)   +(t_3-t_7)   (1,3)   -t_6  (1,6)   \\
&-t_5  (1,14)  + (t_2+t_7)   (2,3)  +(t_4-t_8)   (3,6)   +t_8   (3,14)  \\
&+(t_5-t_6-t_8)   (6,14)  
\end{align*}
holds for some $S_a \in \mathbb{Z}_2$ from Lemma~\ref{lem:weightsum2}, we obtain
\begin{align*}
&t_1=-t_2=t_3=t_7, \mbox{ and  } t_4=t_5=t_6=t_8=0.  
\end{align*}
Therefore, we have 
\begin{align*}
\theta_{19}(W(D,C))=
& \displaystyle \sum_{a,b\in \C(D,C);a<b} s_{a,b} \theta_{19} \Big( \big((a,a),(a,b)\big) \Big)\\
&+ t_1 \theta_{19} \Big(\big((0,1),(0,2)\big)\Big)- t_1 \theta_{19} \Big(\big((0,1),(0,3)\big)\Big)\\
&+ t_1 \theta_{19} \Big(\big((0,2),(0,3)\big)\Big)+ t_1 \theta_{19} \Big(\big((1,2),(1,3)\big)\Big)\\
=&15t_1 - 10t_1 + 4t_1 + 10t_1\\
=& 0,
\end{align*}
which contradicts $0 \not\in \Phi_{\theta_{19}}^{\rm NT}(K)$.

%%%%%%%%%%%%%%%%%%%%%%%%%
%\red{($\C(D,C)=\{0,1,2,3,7,12\}$)}\\
For the case that $\C(D,C)=\{0,1,2,3,7,12\}$, the sum of the weights of all the nontrivially colored crossings is
\begin{align*}
&&W(D,C) =& 
\displaystyle \sum_{a,b\in \C(D,C);a<b} s_{a,b}  \big((a,a),(a,b)\big) + t_1  \big((0,1),(0,2)\big) \\
&&&  + t_2  \big((0,1),(0,3)\big) + t_3  \big((0,2),(0,3)\big) + t_4  \big((0,7),(0,0)\big)\\
&&&   + t_5  \big((1,2),(1,3)\big) + t_6  \big((2,7),(2,12)\big)   + t_7  \big((2,12),(2,3)\big) 
\end{align*}
for some $s_{a,b}$'s, $t_1, \ldots , t_7 \in \mathbb Z$, where we note that when we regard $W(D,C)$ as an element of $C_2^{\rm SLB}(\mathbb Z_{19})$,  $s_{a,b}$'s should be dealt as elements of $\mathbb Z_2$.
Since
\begin{align*}
0=&\partial_2^{\rm SLB}(W(D,C))\\
=&\displaystyle \sum_{a\in \C(D,C)}   S_{a}   (a,a)  +(t_1+t_2)  (0,1)   +  (-t_1+t_3)  (0,2)  +(-t_2-t_3)   (0,3)  \\
& +t_4   (0,7)   -t_4  (0,12)  +(t_1+t_2-t_3+t_5)   (1,2)   +(t_3-t_5)   (1,3) \\
&  +(t_2+t_5-t_7)  (2,3)   +t_6  (2,7)  + (-t_6+t_7)   (2,12)  -t_7 (3,12) +(t_4+t_6)   (7,12)   
\end{align*}
holds for some $S_a \in \mathbb{Z}_2$ from Lemma~\ref{lem:weightsum2}, we obtain
\begin{align*}
&t_1=-t_2=t_3=t_5 \mbox{ and } t_4=t_6=t_7=0.
\end{align*}
Therefore, we have 
\begin{align*}
\theta_{19}(W(D,C))=
& \displaystyle \sum_{a,b\in \C(D,C);a<b} s_{a,b} \theta_{19} \Big( \big((a,a),(a,b)\big) \Big)\\
&+ t_1 \theta_{19} \Big(\big((0,1),(0,2)\big)\Big)- t_1 \theta_{19} \Big(\big((0,1),(0,3)\big)\Big)\\
&+ t_1 \theta_{19} \Big(\big((0,2),(0,3)\big)\Big)+ t_1 \theta_{19} \Big(\big((1,2),(1,3)\big)\Big)\\
=&15t_1 - 10t_1 + 4t_1 + 10t_1\\
=& 0,
\end{align*}
which contradicts $0 \not\in \Phi_{\theta_{19}}^{\rm NT}(K)$.

%%%%%%%%%%%%%%%%%%%%%%%%%
%\red{($\C(D,C)=\{0,1,2,4,5,10\}$)}\\
For the case that $\C(D,C)=\{0,1,2,4,5,10\}$, the sum of the weights of all the nontrivially colored crossings is
\begin{align*}
&&W(D,C) =& 
\displaystyle \sum_{a,b\in \C(D,C);a<b} s_{a,b}  \big((a,a),(a,b)\big) \\
&&&  + t_1  \big((0,1),(0,2)\big) + t_2  \big((0,1),(0,5)\big) + t_3  \big((0,2),(0,4)\big) \\
&&&  + t_4  \big((0,4),(0,5)\big) + t_5  \big((0,5),(0,10)\big) + t_6  \big((0,10),(0,1)\big) \\
&&&  + t_7  \big((1,2),(1,5)\big) + t_8  \big((1,4),(1,5)\big) 
\end{align*}
for some $s_{a,b}$'s, $t_1, \ldots , t_8 \in \mathbb Z$, where we note that when we regard $W(D,C)$ as an element of $C_2^{\rm SLB}(\mathbb Z_{19})$,  $s_{a,b}$'s should be dealt as elements of $\mathbb Z_2$.
Since
\begin{align*}
0=&\partial_2^{\rm SLB}(W(D,C))\\
=&\displaystyle \sum_{a\in \C(D,C)}   S_{a}   (a,a)  +(t_1+t_2-t_6)  (0,1)   +  (-t_1+t_3)  (0,2)  \\
&+(-t_3+t_4)   (0,4)   +(-t_2-t_4+t_5)   (0,5)   +(-t_5+t_6)  (0,10)  \\
&+(t_1+t_7)   (1,2)   +(t_2-t_4+t_8)   (1,4)   +(t_4-t_7-t_8)  (1,5)   \\
&-t_6  (1,10)  + (t_3+t_7-t_8)   (2,4)  +t_8   (2,5)   +(t_2+t_7)  (4,5)  + t_5   (5,10)  
\end{align*}
holds for some $S_a \in \mathbb{Z}_2$ from Lemma~\ref{lem:weightsum2}, we obtain
\begin{align*}
&t_1=t_2=t_3=t_4=t_5=t_6=t_7=t_8=0. 
\end{align*}
Therefore, we have 
\begin{align*}
\theta_{19}(W(D,C))=
& \displaystyle \sum_{a,b\in \C(D,C);a<b} s_{a,b} \theta_{19} \Big( \big((a,a),(a,b)\big) \Big)\\
=& 0,
\end{align*}
which contradicts $0 \not\in \Phi_{\theta_{19}}^{\rm NT}(K)$.

%%%%%%%%%%%%%%%%%%%%%%%%%
%\red{($\C(D,C)=\{0,1,2,4,5,14\}$)}\\
For the case that $\C(D,C)=\{0,1,2,4,5,14\}$, the sum of the weights of all the nontrivially colored crossings is
\begin{align*}
&&W(D,C) =& 
\displaystyle \sum_{a,b\in \C(D,C);a<b} s_{a,b}  \big((a,a),(a,b)\big) \\
&&&  + t_1  \big((0,1),(0,2)\big) + t_2  \big((0,1),(0,5)\big) + t_3  \big((0,2),(0,4)\big) \\
&&&  + t_4  \big((0,4),(0,5)\big) + t_5  \big((0,5),(0,0)\big) + t_6  \big((1,2),(1,5)\big) \\
&&&  + t_7  \big((1,4),(1,5)\big) + t_8  \big((4,14),(4,5)\big) 
\end{align*}
for some $s_{a,b}$'s, $t_1, \ldots , t_8 \in \mathbb Z$, where we note that when we regard $W(D,C)$ as an element of $C_2^{\rm SLB}(\mathbb Z_{19})$,  $s_{a,b}$'s should be dealt as elements of $\mathbb Z_2$.
Since
\begin{align*}
0=&\partial_2^{\rm SLB}(W(D,C))\\
=&\displaystyle \sum_{a\in \C(D,C)}   S_{a}   (a,a)  +(t_1+t_2)  (0,1)   +  (-t_1+t_3)  (0,2)  +(-t_3+t_4)   (0,4)  \\
& +(-t_2-t_4+t_5)   (0,5)   -t_5  (0,14)  +(t_1+t_6)   (1,2)  \\
& +(t_2-t_4+t_7)   (1,4)   +(t_4-t_6-t_7)  (1,5)  +(t_3+t_6-t_7)  (2,4)  \\
& + t_7   (2,5)  +(t_2+t_6-t_8)   (4,5)  +t_8  (4,14)  + (t_5-t_8)   (5,14)   
\end{align*}
holds for some $S_a \in \mathbb{Z}_2$ from Lemma~\ref{lem:weightsum2}, we obtain
\begin{align*}
&t_1=t_2=t_3=t_4=t_5=t_6=t_7=t_8=0. 
\end{align*}
Therefore, we have 
\begin{align*}
\theta_{19}(W(D,C))=
& \displaystyle \sum_{a,b\in \C(D,C);a<b} s_{a,b} \theta_{19} \Big( \big((a,a),(a,b)\big) \Big)\\
=& 0,
\end{align*}
which contradicts $0 \not\in \Phi_{\theta_{19}}^{\rm NT}(K)$. 

%%%%%%%%%%%%%%%%%%%%%%%%%
%\red{($\C(D,C)=\{0,1,2,4,7,12\}$)}\\
For the case that $\C(D,C)=\{0,1,2,4,7,12\}$, the sum of the weights of all the nontrivially colored crossings is
\begin{align*}
&&W(D,C) =& 
\displaystyle \sum_{a,b\in \C(D,C);a<b} s_{a,b}  \big((a,a),(a,b)\big) \\
&&&  + t_1  \big((0,1),(0,2)\big) + t_2  \big((0,2),(0,4)\big) + t_3  \big((0,7),(0,0)\big) \\
&&&  + t_4  \big((1,4),(1,7)\big) + t_5  \big((1,12),(1,4)\big) + t_6  \big((2,7),(2,12)\big) 
\end{align*}
for some $s_{a,b}$'s, $t_1, \ldots , t_6 \in \mathbb Z$, where we note that when we regard $W(D,C)$ as an element of $C_2^{\rm SLB}(\mathbb Z_{19})$,  $s_{a,b}$'s should be dealt as elements of $\mathbb Z_2$.
Since
\begin{align*}
0=&\partial_2^{\rm SLB}(W(D,C))\\
=&\displaystyle \sum_{a\in \C(D,C)}   S_{a}   (a,a)  +t_1  (0,1)   +  (-t_1+t_2)  (0,2) -t_2   (0,4)   +t_3   (0,7)  \\
&  -t_3  (0,12)  +t_1  (1,2)  +(t_4-t_5)   (1,4)   -t_4   (1,7)   +t_5  (1,12)  + t_2   (2,4)   \\
& +t_6   (2,7)   -t_6  (2,12)  + t_4   (4,7)  -t_5   (4,12)  +(t_3+t_6)  (7,12)  
\end{align*}
holds for some $S_a \in \mathbb{Z}_2$ from Lemma~\ref{lem:weightsum2}, we obtain
\begin{align*}
&t_1=t_2=t_3=t_4=t_5=t_6=0.
\end{align*}
Therefore, we have 
\begin{align*}
\theta_{19}(W(D,C))=
& \displaystyle \sum_{a,b\in \C(D,C);a<b} s_{a,b} \theta_{19} \Big( \big((a,a),(a,b)\big) \Big)\\
=& 0,
\end{align*}
which contradicts $0 \not\in \Phi_{\theta_{19}}^{\rm NT}(K)$.

%%%%%%%%%%%%%%%%%%%%%%%%%
%\red{($\C(D,C)=\{0,1,2,4,7,15\}$)}\\
For the case that $\C(D,C)=\{0,1,2,4,7,15\}$, the sum of the weights of all the nontrivially colored crossings is
\begin{align*}
&&W(D,C) =& 
\displaystyle \sum_{a,b\in \C(D,C);a<b} s_{a,b}  \big((a,a),(a,b)\big)+ t_1  \big((0,1),(0,2)\big) \\
&&&   + t_2  \big((0,2),(0,4)\big) + t_3  \big((0,4),(0,0)\big) + t_4  \big((1,4),(1,7)\big)\\
&&&   + t_5  \big((1,7),(1,2)\big) + t_6  \big((1,15),(1,2)\big)   + t_7  \big((4,15),(4,7)\big) 
\end{align*}
for some $s_{a,b}$'s, $t_1, \ldots , t_7 \in \mathbb Z$, where we note that when we regard $W(D,C)$ as an element of $C_2^{\rm SLB}(\mathbb Z_{19})$,  $s_{a,b}$'s should be dealt as elements of $\mathbb Z_2$.
Since
\begin{align*}
0=&\partial_2^{\rm SLB}(W(D,C))\\
=&\displaystyle \sum_{a\in \C(D,C)}   S_{a}   (a,a)  +t_1  (0,1)   +  (-t_1+t_2)  (0,2) +(-t_2+t_3)   (0,4)   -t_3   (0,15)  \\
&  +(t_1-t_5-t_6)  (1,2)  +t_4   (1,4)   +(-t_4+t_5)   (1,7)   +t_6  (1,15)   +t_2  (2,4)   \\
& -t_6   (2,7)-t_5   (2,15)   +(t_4-t_7)  (4,7)  + (t_3+t_7)   (4,15)  +(t_5-t_6-t_7)   (7,15)   
\end{align*}
holds for some $S_a \in \mathbb{Z}_2$ from Lemma~\ref{lem:weightsum2}, we obtain
\begin{align*}
&t_1=t_2=t_3=t_4=t_5=t_6=t_7=0.
\end{align*}
Therefore, we have 
\begin{align*}
\theta_{19}(W(D,C))=
& \displaystyle \sum_{a,b\in \C(D,C);a<b} s_{a,b} \theta_{19} \Big( \big((a,a),(a,b)\big) \Big)\\
=& 0,
\end{align*}
which contradicts $0 \not\in \Phi_{\theta_{19}}^{\rm NT}(K)$.

From the above discussion, we conclude that $\# \C(D,C) \geq 7$, and consequently  $\mincol_{19}(K)\geq 7$.
\end{proof}

\begin{proof}[Proof of Theorem~\ref{thm:main2}~{\rm (}$p=23${\rm )}]
Let $(D,C)$ be a nontrivially Dehn $23$-colored diagram of a knot $K$ with $0 \not\in \Phi_{\theta_{23}}^{\rm NT}(K)$.
Assume that $\#\C(D,C)=6$. 
Then, by Lemma~\ref{lemma:colorequiv} and Theorem~\ref{thm:colorcandidate}, we may assume that $\C(D,C)=\{0,1,2,3,6,12\}$, $\{0,1,2,4,7,12\}$, $\{0,1,2,4,7,13\}$, $\{0,1,2,4,7,14\}$, $\{0,1,2,4,9,14\}$ or $\{0,1,2,4,10,19\}$.

%%%%%%%%%%%%%%%%%%%%%%%%%
%\red{($\C(D,C)=\{0,1,2,3,6,12\}$)}\\
For the case that $\C(D,C)=\{0,1,2,3,6,12\}$, the sum of the weights of all the nontrivially colored crossings is
\begin{align*}
&&W(D,C) =& 
\displaystyle \sum_{a,b\in \C(D,C);a<b} s_{a,b}  \big((a,a),(a,b)\big) + t_1  \big((0,1),(0,2)\big)  \\
&&& + t_2  \big((0,1),(0,3)\big) + t_3  \big((0,2),(0,3)\big)+ t_4  \big((0,3),(0,6)\big) \\
&&&   + t_5  \big((0,6),(0,12)\big) + t_6  \big((0,12),(0,1)\big)  + t_7  \big((1,2),(1,3)\big) 
\end{align*}
for some $s_{a,b}$'s, $t_1, \ldots , t_7 \in \mathbb Z$, where we note that when we regard $W(D,C)$ as an element of $C_2^{\rm SLB}(\mathbb Z_{23})$,  $s_{a,b}$'s should be dealt as elements of $\mathbb Z_2$.
Since
\begin{align*}
0=&\partial_2^{\rm SLB}(W(D,C))\\
=&\displaystyle \sum_{a\in \C(D,C)}   S_{a}   (a,a)  +(t_1+t_2-t_6)  (0,1)   +  (-t_1+t_3)  (0,2)  +(-t_2-t_3+t_4)   (0,3)  \\
& +(-t_4+t_5)   (0,6)   +(-t_5+t_6)  (0,12)  +(t_1+t_2-t_3+t_7)   (1,2)  \\
& +(t_3-t_7)   (1,3)   -t_6  (1,12)  +(t_2+t_7)   (2,3)   + t_4   (3,6)   +t_5   (6,12)  
\end{align*}
holds for some $S_a \in \mathbb{Z}_2$ from Lemma~\ref{lem:weightsum2}, we obtain
\begin{align*}
&t_1=-t_2=t_3=t_7, \mbox{ and }t_4=t_5=t_6=0.  
\end{align*}
Therefore, we have 
\begin{align*}
\theta_{23}(W(D,C))=
& \displaystyle \sum_{a,b\in \C(D,C);a<b} s_{a,b} \theta_{23} \Big( \big((a,a),(a,b)\big) \Big)\\
&+ t_1 \theta_{19} \Big(\big((0,1),(0,2)\big)\Big)- t_1 \theta_{19} \Big(\big((0,1),(0,3)\big)\Big)\\
&+ t_1 \theta_{19} \Big(\big((0,2),(0,3)\big)\Big)+ t_1 \theta_{19} \Big(\big((1,2),(1,3)\big)\Big)\\
=&17t_1 - 9t_1 + 2t_1 + 13t_1\\
=& 0,
\end{align*}
which contradicts $0 \not\in \Phi_{\theta_{23}}^{\rm NT}(K)$.

%%%%%%%%%%%%%%%%%%%%%%%%%
%\red{($\C(D,C)=\{0,1,2,4,7,12\}$)}\\
For the case that $\C(D,C)=\{0,1,2,4,7,12\}$, the sum of the weights of all the nontrivially colored crossings is
\begin{align*}
&&W(D,C) =& 
\displaystyle \sum_{a,b\in \C(D,C);a<b} s_{a,b}  \big((a,a),(a,b)\big) \\
&&&  + t_1  \big((0,1),(0,2)\big) + t_2  \big((0,2),(0,4)\big) + t_3  \big((0,12),(0,1)\big) \\
&&&  + t_4  \big((1,4),(1,7)\big) + t_5  \big((2,7),(2,12)\big)  
\end{align*}
for some $s_{a,b}$'s, $t_1, \ldots , t_5 \in \mathbb Z$, where we note that when we regard $W(D,C)$ as an element of $C_2^{\rm SLB}(\mathbb Z_{23})$,  $s_{a,b}$'s should be dealt as elements of $\mathbb Z_2$.
Since
\begin{align*}
0=&\partial_2^{\rm SLB}(W(D,C))\\
=&\displaystyle \sum_{a\in \C(D,C)}   S_{a}   (a,a)  + (t_1-t_3)  (0,1)   +  (-t_1+t_2)  (0,2)-t_2   (0,4)   \\
&  +t_3   (0,12)   +t_1  (1,2)  +t_4   (1,4)   -t_4   (1,7)   - t_3  (1,12)  \\
&+t_2   (2,4)   + t_5   (2,7)   -t_5   (2,12) +t_4   (4,7)   + t_5   (7,12)   
\end{align*}
holds for some $S_a \in \mathbb{Z}_2$ from Lemma~\ref{lem:weightsum2}, we obtain
\begin{align*}
&t_1=t_2=t_3=t_4=t_5=0. 
\end{align*}
Therefore, we have 
\begin{align*}
\theta_{23}(W(D,C))=
& \displaystyle \sum_{a,b\in \C(D,C);a<b} s_{a,b} \theta_{23} \Big( \big((a,a),(a,b)\big) \Big)\\
=& 0,
\end{align*}
which contradicts $0 \not\in \Phi_{\theta_{23}}^{\rm NT}(K)$.

%%%%%%%%%%%%%%%%%%%%%%%%%
%\red{($\C(D,C)=\{0,1,2,4,7,13\}$)}\\
For the case that $\C(D,C)=\{0,1,2,4,7,13\}$, the sum of the weights of all the nontrivially colored crossings is
\begin{align*}
&&W(D,C) =& 
\displaystyle \sum_{a,b\in \C(D,C);a<b} s_{a,b}  \big((a,a),(a,b)\big) \\
&&&  + t_1  \big((0,1),(0,2)\big) + t_2  \big((0,2),(0,4)\big) + t_3  \big((1,4),(1,7)\big) \\
&&&  + t_4  \big((1,7),(1,13)\big) + t_5  \big((1,13),(1,2)\big)  
\end{align*}
for some $s_{a,b}$'s, $t_1, \ldots , t_5 \in \mathbb Z$, where we note that when we regard $W(D,C)$ as an element of $C_2^{\rm SLB}(\mathbb Z_{23})$,  $s_{a,b}$'s should be dealt as elements of $\mathbb Z_2$.
Since
\begin{align*}
0=&\partial_2^{\rm SLB}(W(D,C))\\
=&\displaystyle \sum_{a\in \C(D,C)}   S_{a}   (a,a)  + t_1  (0,1)   +  (-t_1+t_2)  (0,2)  -t_2   (0,4) \\
&  +(t_1-t_5)   (1,2)   +t_3  (1,4) +(-t_3+t_4)   (1,7)   +(-t_4+t_5)   (1,13) \\
&  + t_2  (2,4)  -t_5   (2,13)   + t_3   (4,7)   +t_4   (7,13)  
\end{align*}
holds for some $S_a \in \mathbb{Z}_2$ from Lemma~\ref{lem:weightsum2}, we obtain
\begin{align*}
&t_1=t_2=t_3=t_4=t_5=0. 
\end{align*}
Therefore, we have 
\begin{align*}
\theta_{23}(W(D,C))=
& \displaystyle \sum_{a,b\in \C(D,C);a<b} s_{a,b} \theta_{23} \Big( \big((a,a),(a,b)\big) \Big)\\
=& 0,
\end{align*}
which contradicts $0 \not\in \Phi_{\theta_{23}}^{\rm NT}(K)$.

%%%%%%%%%%%%%%%%%%%%%%%%%
%\red{($\C(D,C)=\{0,1,2,4,7,14\}$)}\\
For the case that $\C(D,C)=\{0,1,2,4,7,14\}$, the sum of the weights of all the nontrivially colored crossings is
\begin{align*}
&&W(D,C) =& 
\displaystyle \sum_{a,b\in \C(D,C);a<b} s_{a,b}  \big((a,a),(a,b)\big) \\
&&&  + t_1  \big((0,1),(0,2)\big) + t_2  \big((0,2),(0,4)\big) + t_3  \big((0,7),(0,14)\big) \\
&&&  + t_4  \big((1,4),(1,7)\big) + t_5  \big((1,14),(1,4)\big)  
\end{align*}
for some $s_{a,b}$'s, $t_1, \ldots , t_5 \in \mathbb Z$, where we note that when we regard $W(D,C)$ as an element of $C_2^{\rm SLB}(\mathbb Z_{23})$,  $s_{a,b}$'s should be dealt as elements of $\mathbb Z_2$.
Since
\begin{align*}
0=&\partial_2^{\rm SLB}(W(D,C))\\
=&\displaystyle \sum_{a\in \C(D,C)}   S_{a}   (a,a)  + t_1  (0,1)   +  (-t_1+t_2)  (0,2) -t_2   (0,4)  \\
&  +t_3   (0,7)   -t_3  (0,14) +t_1   (1,2)   +(t_4-t_5)   (1,4)   - t_4  (1,7)  \\
&+t_5   (1,14)   + t_2   (2,4)   +t_4   (4,7)  -t_5   (4,14)   + t_3   (7,14)   
\end{align*}
holds for some $S_a \in \mathbb{Z}_2$ from Lemma~\ref{lem:weightsum2}, we obtain
\begin{align*}
&t_1=t_2=t_3=t_4=t_5=0. 
\end{align*}
Therefore, we have 
\begin{align*}
\theta_{23}(W(D,C))=
& \displaystyle \sum_{a,b\in \C(D,C);a<b} s_{a,b} \theta_{23} \Big( \big((a,a),(a,b)\big) \Big)\\
=& 0,
\end{align*}
which contradicts $0 \not\in \Phi_{\theta_{23}}^{\rm NT}(K)$.

%%%%%%%%%%%%%%%%%%%%%%%%%
%\red{($\C(D,C)=\{0,1,2,4,9,14\}$)}\\
For the case that $\C(D,C)=\{0,1,2,4,9,14\}$, the sum of the weights of all the nontrivially colored crossings is
\begin{align*}
&&W(D,C) =& 
\displaystyle \sum_{a,b\in \C(D,C);a<b} s_{a,b}  \big((a,a),(a,b)\big) \\
&&&  + t_1  \big((0,1),(0,2)\big) + t_2  \big((0,2),(0,4)\big) + t_3  \big((0,9),(0,0)\big) \\
&&&  + t_4  \big((1,14),(1,4)\big) + t_5 \big((4,9),(4,14)\big)   
\end{align*}
for some $s_{a,b}$'s, $t_1, \ldots , t_5 \in \mathbb Z$, where we note that when we regard $W(D,C)$ as an element of $C_2^{\rm SLB}(\mathbb Z_{23})$,  $s_{a,b}$'s should be dealt as elements of $\mathbb Z_2$.
Since
\begin{align*}
0=&\partial_2^{\rm SLB}(W(D,C))\\
=&\displaystyle \sum_{a\in \C(D,C)}   S_{a}   (a,a)  + t_1  (0,1)   +  (-t_1+t_2)  (0,2) -t_2   (0,4)  \\
&  +t_3   (0,9)   -t_3  (0,14) +t_1   (1,2)   -t_4   (1,4)   + t_4  (1,14)  \\
&+t_2   (2,4)   + t_5   (4,9)   +(-t_4-t_5)   (4,14)  +(t_3+t_5)   (9,14)     
\end{align*}
holds for some $S_a \in \mathbb{Z}_2$ from Lemma~\ref{lem:weightsum2}, we obtain
\begin{align*}
&t_1=t_2=t_3=t_4=t_5=0. 
\end{align*}
Therefore, we have 
\begin{align*}
\theta_{23}(W(D,C))=
& \displaystyle \sum_{a,b\in \C(D,C);a<b} s_{a,b} \theta_{23} \Big( \big((a,a),(a,b)\big) \Big)\\
=& 0,
\end{align*}
which contradicts $0 \not\in \Phi_{\theta_{23}}^{\rm NT}(K)$.

%%%%%%%%%%%%%%%%%%%%%%%%%
%\red{($\C(D,C)=\{0,1,2,4,10,19\}$)}\\
For the case that $\C(D,C)=\{0,1,2,4,10,19\}$, the sum of the weights of all the nontrivially colored crossings is
\begin{align*}
&&W(D,C) =& 
\displaystyle \sum_{a,b\in \C(D,C);a<b} s_{a,b}  \big( (a,a),(a,b)\big)  \\
&&&  + t_1   \big((0,1),(0,2)\big)  + t_2  \big( (0,2),(0,4) \big) + t_3   \big((0,4),(0,0)\big)  \\
&&&  +t_4 \big(  (1,10),(1,19)\big) + t_5 \big(  (2,10),(2,4)\big)  + t_6 \big(  (2,19),(2,4)\big)   
\end{align*}
for some $s_{a,b}$'s, $t_1, \ldots , t_6 \in \mathbb Z$, where we note that when we regard $W(D,C)$ as an element of $C_2^{\rm SLB}(\mathbb Z_{23})$,  $s_{a,b}$'s should be dealt as elements of $\mathbb Z_2$.
Since
\begin{align*}
0=&\partial_2^{\rm SLB}(W(D,C))\\
=&\displaystyle \sum_{a\in \C(D,C)}   S_{a}   (a,a)  + t_1  (0,1)   +  (-t_1+t_2)  (0,2) +(-t_2+t_3)   (0,4)  \\
&  -t_3 (0,19)  +t_1   (1,2)   +t_4  (1,10) -t_4   (1,19)   +(t_2-t_5-t_6)   (2,4) \\
&   + t_5  (2,10)  +t_6   (2,19)   -t_6   (4,10)   +(t_3-t_5)   (4,19) +(t_4+t_5-t_6)   (10,19)     
\end{align*}
holds for some $S_a \in \mathbb{Z}_2$ from Lemma~\ref{lem:weightsum2}, we obtain
\begin{align*}
&t_1=t_2=t_3=t_4=t_5=t_6=0. 
\end{align*}
Therefore, we have 
\begin{align*}
\theta_{23}(W(D,C))=
& \displaystyle \sum_{a,b\in \C(D,C);a<b} s_{a,b} \theta_{23} \Big( \big((a,a),(a,b)\big) \Big)\\
=& 0,
\end{align*}
which contradicts $0 \not\in \Phi_{\theta_{23}}^{\rm NT}(K)$.

%%%%%%%%%%%%%%%%%%%%%%%%%

From the above discussion, we conclude that $\# \C(D,C) \geq 7$, and consequently  $\mincol_{23}(K)\geq 7$.
\end{proof}

\begin{proof}[Proof of Theorem~\ref{thm:main2}~{\rm (}$p=29${\rm )}]
When $p=29$, it is clear from Theorem~\ref{thm:main3} that $\mincol_{29}(K)\geq 7$ holds.
\end{proof}

\begin{proof}[Proof of Theorem~\ref{thm:main2}~{\rm (}$p=31${\rm )}]
Let $(D,C)$ be a nontrivially Dehn $31$-colored diagram of a knot $K$ with $0 \not\in \Phi_{\theta_{31}}^{\rm NT}(K)$.
Assume that $\#\C(D,C)=6$. 
Then, by Lemma~\ref{lemma:colorequiv} and Theorem~\ref{thm:colorcandidate}, we may assume that $\C(D,C)=\{0,1,2,4,8,16\}$.

%%%%%%%%%%%%%%%%%%%%%%%%%
%\red{($\C(D,C)=\{0,1,2,4,8,16\}$)}\\
For the case that $\C(D,C)=\{0,1,2,4,8,16\}$, the sum of the weights of all the nontrivially colored crossings is
\begin{align*}
&&W(D,C) =& 
\displaystyle \sum_{a,b\in \C(D,C);a<b} s_{a,b}  \big((a,a),(a,b)\big) \\
&&&  + t_1  \big((0,1),(0,2)\big) + t_2  \big((0,2),(0,4)\big) + t_3  \big((0,4),(0,8)\big) \\
&&&  + t_4  \big((0,8),(0,16)\big) + t_5  \big((0,16),(0,1)\big)  
\end{align*}
for some $s_{a,b}$'s, $t_1, \ldots , t_5 \in \mathbb Z$, where we note that when we regard $W(D,C)$ as an element of $C_2^{\rm SLB}(\mathbb Z_{31})$,  $s_{a,b}$'s should be dealt as elements of $\mathbb Z_2$.
Since
\begin{align*}
0=&\partial_2^{\rm SLB}(W(D,C))\\
=&\displaystyle \sum_{a\in \C(D,C)}   S_{a}   (a,a)  +(t_1-t_5)  (0,1)   +  (-t_1+t_2)  (0,2) +(-t_2+t_3)   (0,4)   \\
& +(-t_3+t_4)   (0,8)   +(-t_4+t_5)  (0,16) + t_1   (1,2)  -t_5   (1,16)   + t_2  (2,4) \\
&+t_3   (4,8)   + t_4   (8,16)   
\end{align*}
holds for some $S_a \in \mathbb{Z}_2$ from Lemma~\ref{lem:weightsum2}, we obtain
\begin{align*}
&t_1=t_2=t_3=t_4=t_5=0. 
\end{align*}
Therefore, we have 
\begin{align*}
\theta_{31}(W(D,C))=
& \displaystyle \sum_{a,b\in \C(D,C);a<b} s_{a,b} \theta_{31} \Big( \big((a,a),(a,b)\big) \Big)\\
=& 0,
\end{align*}
which contradicts $0 \not\in \Phi_{\theta_{31}}^{\rm NT}(K)$. 

%%%%%%%%%%%%%%%%%%%%%%%%%

From the above discussion, we conclude that $\# \C(D,C) \geq 7$, and consequently  $\mincol_{31}(K)\geq 7$.
\end{proof}

\section{Proof of Proposition~\ref{thm:main4}}

\begin{proof}[Proof of Proposition~\ref{thm:main4}~{\rm (}$p=7${\rm )}]
Let $K$ be the $5_2$ knot in the Rolfsen's knot table.
Since $K$ has a nontrivially Dehn $7$-colored diagram $(D,C)$ with $\#\C(D,C)=5$ as depicted in the left of Figure~\ref{knot5-2}, $\mincol_7(K)\leq 5$ holds.
\begin{figure}[t]
  \begin{center}
    \includegraphics[clip,width=7cm]{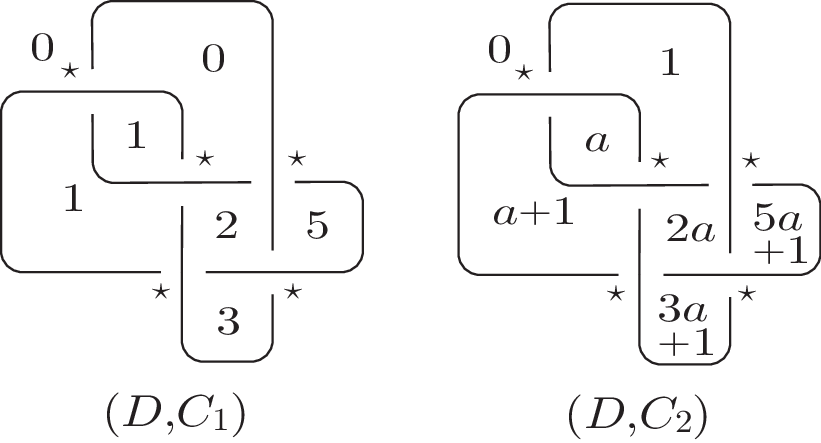}
    \caption{}
    \label{knot5-2}
  \end{center}
\end{figure}

For the nontrivially Dehn $7$-colored diagram $(D,C_1)$ of $K$ in the left of Figure~\ref{knot5-2}, since 
\begin{align*}
W(D,C_1) =& + \big((0,0),(0,1)\big) - \big((0,1),(0,2)\big)+ \big((0,5),(0,0)\big)\\
& +\big((0,1),(0,3)\big) +\big((0,3),(0,5)\big),
\end{align*}
we have 
\begin{align*}
\theta_7(W(D,C_1))= 
&+(0-0) \frac{  (0-0+2\cdot 1)^7 +( 0+0)^7- 2( 0+1)^7   }{7}\\
&-(0-1) \frac{  (0-1+2\cdot 2)^7 +( 0+1)^7- 2( 0+2)^7   }{7}\\
&+(0-5) \frac{  (0-5+2\cdot 0)^7 +( 0+5)^7- 2( 0+0)^7   }{7}\\
&+(0-1) \frac{  (0-1+2\cdot 3)^7 +( 0+1)^7- 2( 0+3)^7   }{7}\\
&+(0-3) \frac{  (0-3+2\cdot 5)^7 +( 0+3)^7- 2( 0+5)^7   }{7}\\
=&+0-4+0+6+3= 5 \not = 0\in \mathbb Z_7.
\end{align*}

For the nontrivially Dehn $7$-colored diagram $(D,C_2)$, with some $a\in \mathbb Z_7^{\times}$, of $K$ in the right of Figure~\ref{knot5-2}, since 
\begin{align*}
W(D,C_2) =& + \big((0,1),(0,a+1)\big) - \big((1,a),(1,2a)\big)+ \big((0,5a+1),(0,1)\big)\\
& +\big((0,a+1),(0,3a+1)\big) +\big((0,3a+1),(0,5a+1)\big),
\end{align*}
we have 
\begin{align*}
&\theta_7(W(D,C_2))\\
&=+(0-1) \frac{  (0-1+2\cdot (a+1))^7 +( 0+1)^7- 2( 0+(a+1))^7}{7}\\
&\hspace{1em}-(1-a) \frac{  (1-a+2\cdot (2a))^7 +( 1+a)^7- 2( 1+(2a))^7}{7}\\
&\hspace{1em}+(0-(5a+1)) \frac{  (0-(5a+1)+2\cdot 1)^7 +( 0+(5a+1))^7- 2( 0+1)^7}{7}\\
&\hspace{1em}+(0-(a+1)) \frac{  (0-(a+1)+2\cdot (3a+1))^7 +( 0+(a+1))^7- 2( 0+(3a+1))^7}{7}\\
&\hspace{1em}+(0-(3a+1)) \frac{  (0-(3a+1)+2\cdot (5a+1))^7 +( 0+(3a+1))^7- 2( 0+(5a+1))^7}{7}\\
&=+(1 + 3 a + a^2 + 5 a^3 + a^5)
%&\hspace{1em}
-(6 + 3 a + 3 a^2 + 5 a^3 + a^4 + 3 a^5)\\
&\hspace{1em}+(5 + 4 a + 4 a^2 + 6 a^3 + a^4 + 5 a^5)
%&\hspace{1em}
+(4 + 6 a + 3 a^2 + a^3 + a^4 + a^5)\\
&\hspace{1em}+(3 + 4 a + 6 a^4 + 3 a^5)\\
&=5a^2 \not = 0 \in \mathbb Z_7.
\end{align*}

For any nontrivial Dehn $7$-coloring $C$ of $D$, $C$ is affine equivalent to $C_1$ or $C_2$ in Figure~\ref{knot5-2}, and more precisely, $C\sim C_1$ if two colors are appeared around the upper left crossing on $(D,C)$, and  $C\sim C_2$ otherwise.
Since we have $sC+t = C_1$ or $C_2$ for some $s\in \mathbb Z_7^{\times}$ and $t\in \mathbb Z_7$, by Proposition~\ref{thm:cocycleinvariantforaffineequivalence},
$$\theta_7 (W(D,C))= s^2 \theta_7(W(D,C_1)) \mbox{ or } s^2 \theta_7(W(D,C_2)),$$
which implies that $\theta_7 (W(D,C))\not = 0$.
Therefore, $\mincol_7(K)\geq 5$ holds from Theorem~\ref{thm:main2}, and thus, we have  $\mincol_7(K)= 5$. 

We remark that 
\begin{align*}
&\Phi_{\theta_{7}}^{\rm NT}(K)\\
&=\{ 1\mbox{ }(49\mbox{ times}), 2\mbox{ }(49\mbox{ times}), 3\mbox{ }(49\mbox{ times}), 4\mbox{ }(49\mbox{ times}), 5\mbox{ }(49\mbox{ times}), 6\mbox{ }(49\mbox{ times}) \}.
\end{align*}
\end{proof}

Similar arguments apply to the other cases, and hence, we will briefly sketch the proofs of the cases when $p\in \{11,13, 19, 23, 29, 31\}$.

\begin{proof}[Proof of Proposition~\ref{thm:main4}~{\rm (}$p=11${\rm )}]
Let $K$ be the knot represented by the diagram in Figure~\ref{p=11diagex2}. First, we have a knot diagram of $K$ that is colored by $6$ colors (see Figure~\ref{p=11diagex2}), and thus $\mincol_{11}(K) \leq \lfloor \log_2 11 \rfloor +3 = 6$.

Moreover, we have
\begin{align*}
&\Phi_{\theta_{11}}^{\rm NT}(K)\\
&=\{ 2\mbox{ }(242\mbox{ times}), 6\mbox{ }(242\mbox{ times}), 7\mbox{ }(242\mbox{ times}), 8\mbox{ }(242\mbox{ times}), 10\mbox{ }(242\mbox{ times}) \}.
\end{align*}
Since $0 \notin \Phi_{\theta_{11}}^{\rm NT}(K)$, we have $\mincol_{11}(K) \geq \lfloor \log_2 11 \rfloor +3 = 6$ from Theorem~\ref{thm:main2}. 

Hence, we obtain  $\mincol_{11}(K) = \lfloor \log_2 11 \rfloor +3 = 6$. 

\begin{figure}[h]
\begin{center}
\includegraphics[width=3.4cm]{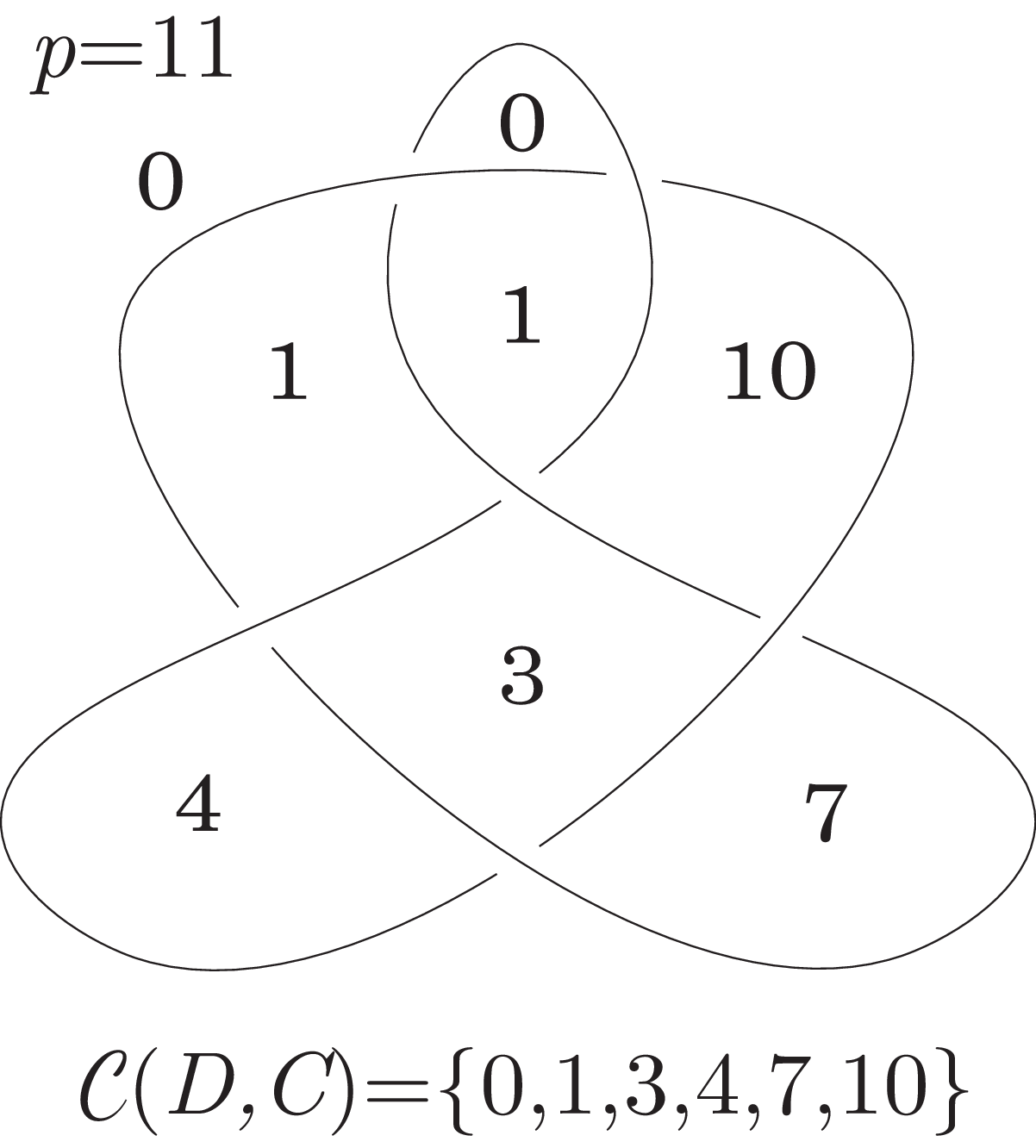}
\caption{} 
\label{p=11diagex2}
\end{center}
\end{figure}

\end{proof}

\begin{proof}[Proof of Proposition~\ref{thm:main4}~{\rm (}$p=13${\rm )}]
Let $K$ be the knot represented by the diagram in Figure~\ref{p=13diagex2}. First, we have a knot diagram of $K$ that is colored by $6$ colors (see Figure~\ref{p=13diagex2}), and thus $\mincol_{13}(K) \leq \lfloor \log_2 13 \rfloor +3 = 6$. 

Moreover, it is clear from Theorem~\ref{thm:main3} that $\mincol_{13}(K) \geq \lfloor \log_2 13 \rfloor +3 = 6$ holds.

Hence, we obtain  $\mincol_{13}(K) = \lfloor \log_2 13 \rfloor +3 = 6$. 

\begin{figure}[h]
\begin{center}
\includegraphics[width=5.3cm]{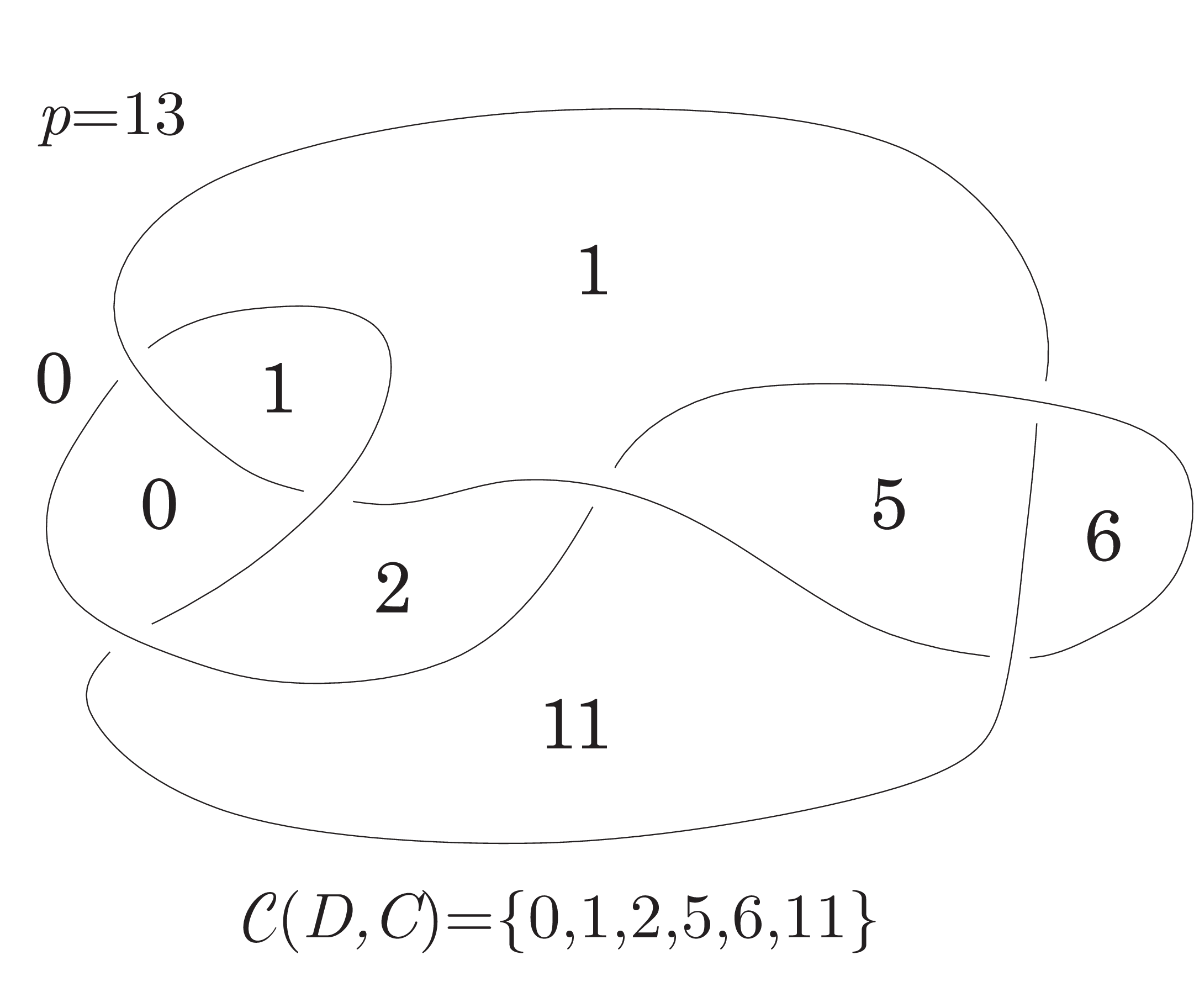}
\caption{} 
\label{p=13diagex2}
\end{center}
\end{figure}

\end{proof}

\begin{proof}[Proof of Proposition~\ref{thm:main4}~{\rm (}$p=19${\rm )}]
Let $K$ be the knot represented by the diagram in Figure~\ref{p=19diagex2}. First, we have a knot diagram of $K$ that is colored by $7$ colors (see Figure~\ref{p=11diagex2}), and thus $\mincol_{19}(K) \leq \lfloor \log_2 19 \rfloor +3 = 7$. 

Moreover, we have
\begin{align*}
&\Phi_{\theta_{19}}^{\rm NT}(K)\\
&= \{ 2\mbox{ }(722\mbox{ times}), 3\mbox{ }(722\mbox{ times}), 8\mbox{ }(722\mbox{ times}), 10\mbox{ }(722\mbox{ times}), \\
&\hspace{1em} 12\mbox{ }(722\mbox{ times}), 13\mbox{ }(722\mbox{ times}), 14\mbox{ }(722\mbox{ times}), 15\mbox{ }(722\mbox{ times}),\\ 
&\hspace{1em} 18\mbox{ }(722\mbox{ times}) \}.
\end{align*} 
Since $0 \notin \Phi_{\theta_{19}}^{\rm NT}(K)$, we have $\mincol_{19}(K) \geq \lfloor \log_2 19 \rfloor +3 = 7$ from Theorem~\ref{thm:main2}. 

Hence, we obtain  $\mincol_{19}(K) = \lfloor \log_2 19 \rfloor +3 = 7$. 

\begin{figure}[h]
\begin{center}
\includegraphics[width=4cm]{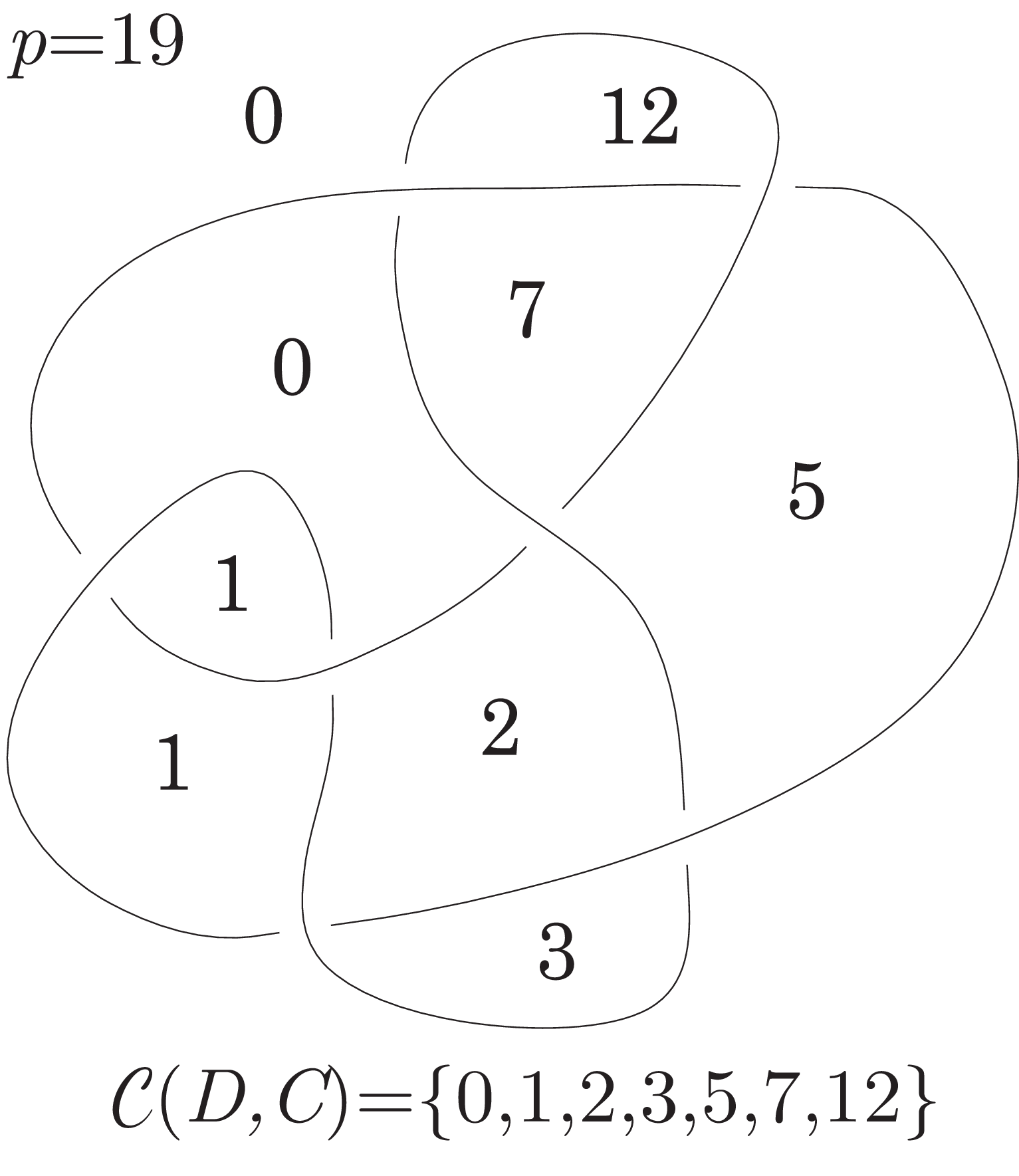}
\caption{} 
\label{p=19diagex2}
\end{center}
\end{figure}

\end{proof}

\begin{proof}[Proof of Proposition~\ref{thm:main4}~{\rm (}$p=23${\rm )}]
Let $K$ be the knot represented by the diagram in Figure~\ref{p=23diagex2}. First, we have a knot diagram of $K$ that is colored by $8$ colors (see Figure~\ref{p=23diagex2}), and thus $\mincol_{23}(K) \leq \lfloor \log_2 23 \rfloor +4 = 8$. 

Moreover, we have
\begin{align*}
&\Phi_{\theta_{23}}^{\rm NT}(K)\\
&= \{ 1\mbox{ }(1058\mbox{ times}), 2\mbox{ }(1058\mbox{ times}), 3\mbox{ }(1058\mbox{ times}), 4\mbox{ }(1058\mbox{ times}),\\
&\hspace{1em}6\mbox{ }(1058\mbox{ times}),8\mbox{ }(1058\mbox{ times}), 9\mbox{ }(1058\mbox{ times}), 12\mbox{ }(1058\mbox{ times}), \\
&\hspace{1em}13\mbox{ }(1058\mbox{ times}), 16\mbox{ }(1058\mbox{ times}), 18\mbox{ }(1058\mbox{ times}) \}. 
\end{align*}
Since $0 \notin \Phi_{\theta_{23}}^{\rm NT}(K)$, we have $\mincol_{23}(K) \geq \lfloor \log_2 23 \rfloor +3 = 7$ from Theorem~\ref{thm:main2}. 

Hence, we obtain  $\mincol_{23}(K) = \lfloor \log_2 23 \rfloor +3 \mbox{ or } \lfloor \log_2 23 \rfloor +4$. 

\begin{figure}[h]
\begin{center}
\includegraphics[width=5cm]{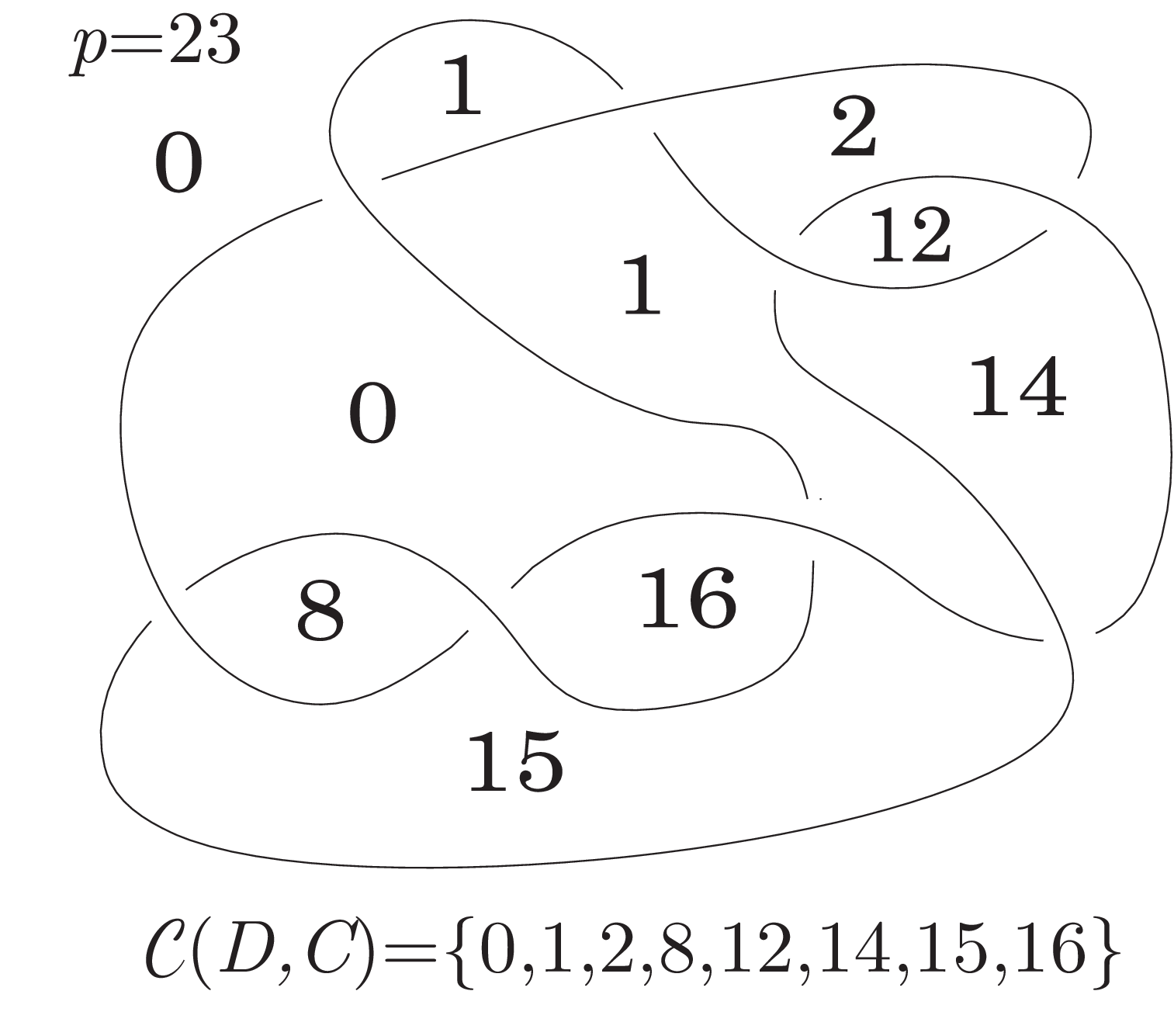}
\caption{} 
\label{p=23diagex2}
\end{center}
\end{figure}

\end{proof}

\begin{proof}[Proof of Proposition~\ref{thm:main4}~{\rm (}$p=29${\rm )}]
Let $K$ be the knot represented by the diagram in Figure~\ref{p=29diagex}. First, we have a knot diagram of $K$ that is colored by $7$ colors (see Figure~\ref{p=29diagex}), and thus $\mincol_{29}(K) \leq \lfloor \log_2 29 \rfloor +3 = 7$. 

Moreover, it is clear from Theorem~\ref{thm:main3} that $\mincol_{29}(K) \geq \lfloor \log_2 29 \rfloor +3 = 7$ holds.

Hence, we obtain  $\mincol_{29}(K) = \lfloor \log_2 29 \rfloor +3 = 7$. 

\begin{figure}[h]
\begin{center}
\includegraphics[width=6cm]{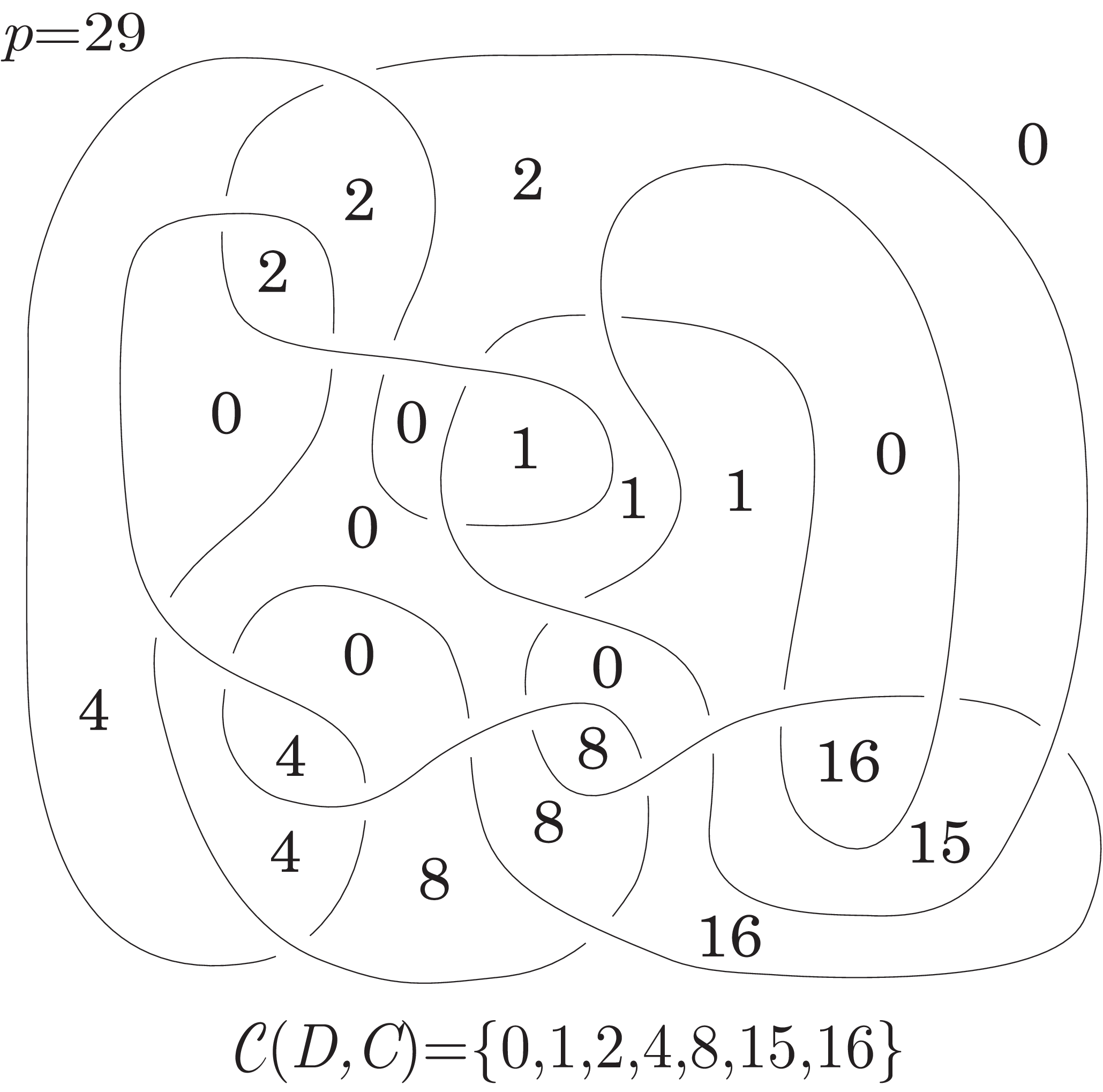}
\caption{} 
\label{p=29diagex}
\end{center}
\end{figure}

\end{proof}

\begin{proof}[Proof of Proposition~\ref{thm:main4}~{\rm (}$p=31${\rm )}]
Let $K$ be the knot represented by the diagram in Figure~\ref{p=31diagex}. First, we have a knot diagram of $K$ that is colored by $8$ colors (see Figure~\ref{p=31diagex}), and thus $\mincol_{31}(K) \leq \lfloor \log_2 31 \rfloor +4 = 8$. 

Moreover, we have
\begin{align*}
&\Phi_{\theta_{31}}^{\rm NT}(K)\\
&= \{ 1\mbox{ }(1922\mbox{ times}), 2\mbox{ }(1922\mbox{ times}), 4\mbox{ }(1922\mbox{ times}), 5\mbox{ }(1922\mbox{ times}),\\
&\hspace{1em}7\mbox{ }(1922\mbox{ times}), 8\mbox{ }(1922\mbox{ times}), 9\mbox{ }(1922\mbox{ times}), 10\mbox{ }(1922\mbox{ times}), \\
&\hspace{1em}14\mbox{ }(1922\mbox{ times}), 16\mbox{ }(1922\mbox{ times}), 18\mbox{ }(1922\mbox{ times}), 19\mbox{ }(1922\mbox{ times}), \\
&\hspace{1em}20\mbox{ }(1922\mbox{ times}), 25\mbox{ }(1922\mbox{ times}), 28\mbox{ }(1922\mbox{ times}) \}. 
\end{align*}
Since $0 \notin \Phi_{\theta_{31}}^{\rm NT}(K)$, we have $\mincol_{31}(K) \geq \lfloor \log_2 31 \rfloor +3 = 7$ from Theorem~\ref{thm:main2}. 

Hence, we obtain  $\mincol_{31}(K) = \lfloor \log_2 31 \rfloor +3 \mbox{ or } \lfloor \log_2 31 \rfloor +4$. 

\begin{figure}[h]
\begin{center}
\includegraphics[width=5cm]{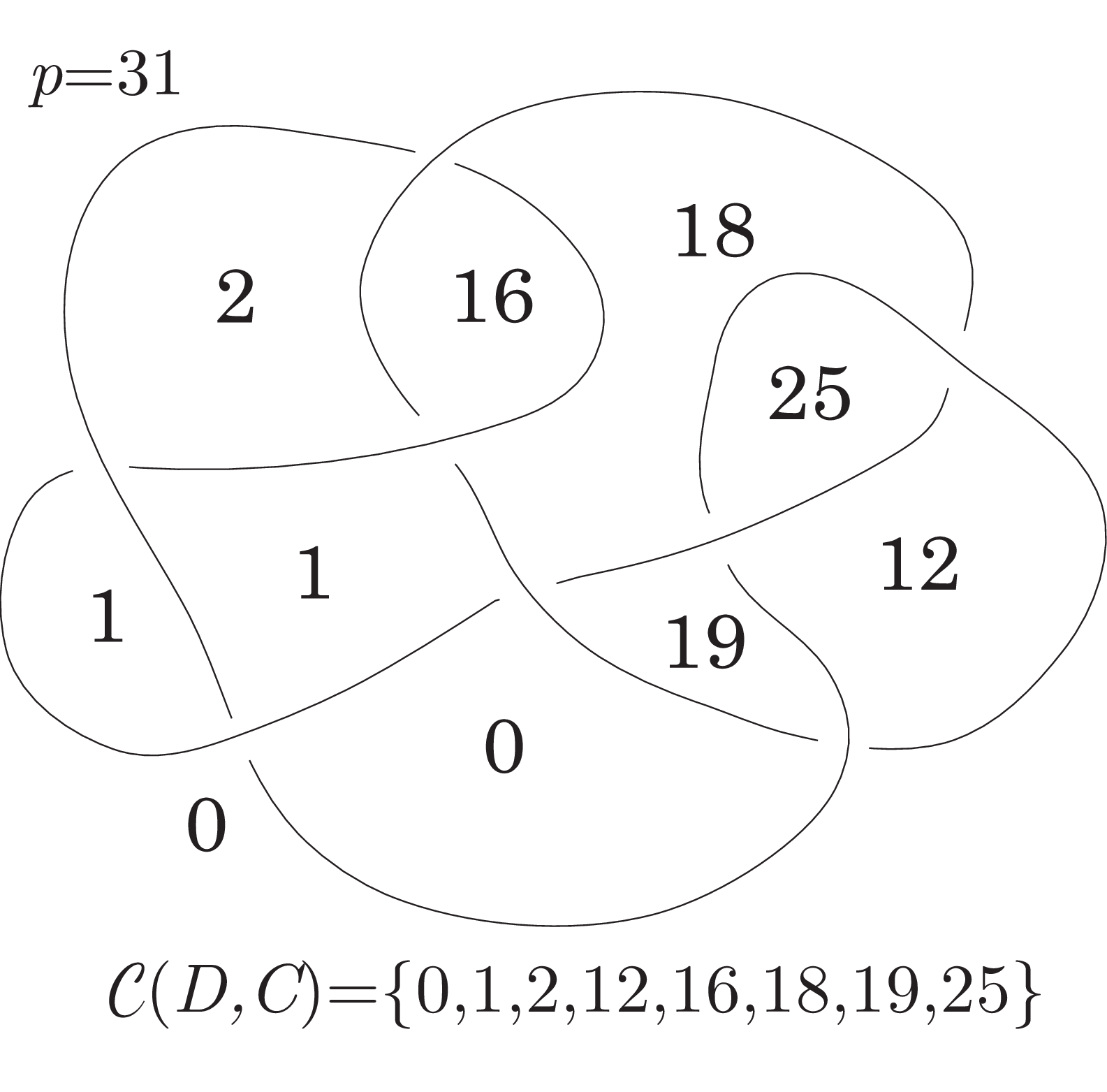}
\caption{} 
\label{p=31diagex}
\end{center}
\end{figure}

\end{proof}

\section*{Acknowledgments}

The second author was supported by JSPS KAKENHI Grant Number 21K03233.

\end{document}